\newtheorem*{assumption}{Assumption}
\selectfont\symbol{62}\fontencoding{\encodingdefault}}
\newcommand{\assign}{:=}
\newcommand{\cdummy}{\cdot}
\newcommand{\mathd}{\mathrm{d}}
\newcommand{\nocomma}{}
\newcommand{\tmop}[1]{\ensuremath{\operatorname{#1}}}
\newcommand{\tmstrong}[1]{\textbf{#1}}
\newcommand{\tmtextbf}[1]{{\bfseries{#1}}}
\newcommand{\tmtextit}[1]{{\itshape{#1}}}
\newenvironment{enumerateroman}{\begin{enumerate}[i.] }{\end{enumerate}}
\newtheorem{theorem}{Theorem}[section]
\newtheorem{corollary}[theorem]{Corollary}
\newtheorem{definition}[theorem]{Definition}
\newtheorem{lemma}[theorem]{Lemma}
\newtheorem{remark}[theorem]{Remark}
\newtheorem{proposition}[theorem]{Proposition}
\newcommand{\CS}{\mathscr{S}}
\newcommand{\CG}{\mathscr{G}}
\newcommand{\CH}{\mathscr{H}}
\newcommand{\CE}{\mathscr{E}}
\newcommand{\CL}{\mathcal{L}}
\newcommand{\CQ}{\mathscr{Q}}
\newcommand{\dd}{\mathrm{d}}
\newcommand{\R}{\mathbb{R}}
\newcommand{\E}{\mathbb{E}}
\newcommand{\N}{\mathbb{N}}
\newcommand{\1}{\mathbf{1}}
\newcommand{\Z}{\mathbb{Z}}
\begin{document}

\title{The Kardar--Parisi--Zhang equation\\ as scaling limit of weakly asymmetric\\ interacting Brownian motions}
\author{
  Joscha Diehl\thanks{Finanical support by the DAAD P.R.I.M.E. program is gratefully acknowledged.}\\
  Max-Planck Institute for Mathematics in the Sciences \\
  Leipzig \\
  \texttt{diehl@mis.mpg.de}
  \and
  Massimiliano Gubinelli\\
  Hausdorff Center for Mathematics\\
   \& Institute for Applied Mathematics\\
   Universit{\"a}t Bonn \\
  \texttt{gubinelli@iam.uni-bonn.de}
  \and
  Nicolas Perkowski\thanks{Financial support by the DFG via Research Unit FOR 2402 is gratefully acknowledged.} \\
  Institut f\"ur Mathematik \\
  Humboldt--Universit\"at zu Berlin \\
  \texttt{perkowsk@math.hu-berlin.de}
}

\maketitle

\begin{abstract}
   We consider a system of infinitely many interacting Brownian motions that models the height of a one-dimensional interface between two bulk phases. We prove that the large scale fluctuations of the system  are well approximated by the solution to the KPZ equation provided the microscopic interaction is weakly asymmetric. The proof is based on the martingale solutions of Gon\c calves and Jara~\cite{bib:goncalvesJara} and the corresponding uniqueness result of~\cite{bib:energyUniqueness}.
\end{abstract}


\section{Introduction}

The Kardar--Parisi--Zhang (KPZ) universality class has been at the center of a very active field of research during the past decade. The main aim is to understand and prove the \emph{strong KPZ universality conjecture}, which roughly speaking states that any model describing the random dynamics of a one-dimensional interface between two phases and satisfying a few qualitative assumptions exhibits large time fluctuations that converge under the characteristic ``1-2-3 KPZ scaling'' to a universal limit. For example, we can model the interface dynamics by independent Brownian motions that interact with their neighbors through a potential function $V$:
\begin{equation}\label{eq:general}
   \mathrm{d}\phi_t (i) = \left\{ pV' (\phi_t (i + 1) - \phi_t (i)) - qV' (\phi_t (i) - \phi_t (i-1)) \right\} \mathrm{d}t + \mathrm{d}W_t (i), \qquad (t,i) \in \R_+ \times \Z,
\end{equation}
where $(W (i) : i \in \mathbb{Z})$ is an independent family of standard Brownian motions and $p,q \ge 0$ with $p+q=1$. The case $p=q=1/2$ describes a type of balance between the two phases, so the dynamics are reversible and there is no real growth. In that case the model is well understood and it exhibits Gaussian fluctuations on the ``1-2-4 Edwards-Wilkinson (EW) scale''~\cite{bib:spohn, bib:zhu, bib:changYau}. If $p \neq q$ it is in the KPZ universality class and a general understanding of the fluctations seems to be out of reach at the moment. Indeed most of the research on the KPZ universality class is based on \emph{stochastically integrable} models which admit tractable formulas for key probabilities of interest that lend themselves to deriving explicit large time asymptotics. The main difficulty is that the universal limit, the so-called \emph{KPZ fixed point}, is not very well understood yet, although some conjectural descriptions are available~\cite{bib:corwinQuastelRemenik}.

The \emph{weak KPZ universality conjecture} on the other hand claims that the \emph{KPZ equation} is a universal object. The KPZ equation is a nonlinear stochastic PDE for the height variable $h(t,x)$ which reads
\begin{equation}
\label{eq:kpz}
   \partial_t h(t,x) = \nu \Delta h(t,x) + \lambda (|\nabla h(t,x)|^2 - \infty) + \sqrt{D} \xi(t,x), \qquad (t,x) \in \R_+ \times \R,
\end{equation}
where $\xi$ is a space-time white noise and $\nu>0, D>0, \lambda$ fixed parameters. This equation is not scale-invariant but \emph{locally subcritical} in the language of Hairer~\cite{bib:hairerRegularity}, meaning that on small time-scales the linear part of the equation dominates the nonlinear part. Due to this lack of scale-invariance the equation cannot arise as the scaling limit of one fixed model. However, the conjecture claims that if a one-dimensional stochastic interface growth model satisfies a few basic qualitative assumptions, and if it admits a tuning parameter that allows to interpolate between a reversible regime describing a balance between the two phases and a non-symmetric regime describing an imbalance, then by calibrating the model to be closer and closer to the reversible regime we can make the large time fluctuations converge to the KPZ equation. In our model~\eqref{eq:general} this corresponds to setting $p-q=\sqrt{\varepsilon}$ and taking the scaling limit along this changing family of \emph{weakly asymmetric} models.

The relation between the weak and the strong conjecture goes as follows. The KPZ equation~\eqref{eq:kpz} has to be understood as a one parameter family of models which interpolates between the EW and the KPZ fixed point in the sense of Wilson's renormalization group picture of universality in statistical physics. Indeed by scale transformations the parameters $D$ and $\nu$ in~\eqref{eq:kpz} can be set to $1$ and we remain with only one parameter $\lambda$ controlling the size of the non-linearity. When $\lambda \to 0$ we converge to a Gaussian limit while when $\lambda \to \infty$ we should be asymptotically describing the KPZ fixed point. This last however appears as a singular limit of the KPZ equation and the description of the limiting dynamics remains highly conjectural. For further information on the strong and weak KPZ universality conjectures see~\cite{bib:corwin, bib:quastel, bib:quastelSpohn, bib:spohnSurvey}.

For many years the weak KPZ universality conjecture was not much more tractable than the strong conjecture. But recent breakthroughs in the understanding of the KPZ equation have made a proof feasible for various models. The first mathematically rigorous work on the KPZ equation is due to Bertini and Giacomin~\cite{bib:bertiniGiacomin} who define the solution $h$ to the KPZ equation~\eqref{eq:kpz}
via the Cole-Hopf transform as $h\assign (\nu / \lambda) \log Z$, where $Z$ solves the linear stochastic heat equation
\[
   \partial_t Z(t,x) = \nu \Delta Z(t,x) + \frac{\lambda \sqrt{D}}{\nu} \xi(t,x), \qquad (t,x) \in \R_+ \times \R.
\]
While Bertini and Giacomin do not make sense of the equation~\eqref{eq:kpz} for $h$ it turns out that the object they define is indeed the physically relevant one and in~\cite{bib:bertiniGiacomin} they are able to show that the fluctuations of the weakly asymmetric simple exclusion process (WASEP) converge to the (derivative of the) KPZ equation. Their proof crucially relies on the fact that the WASEP behaves well under exponentiation and their approach only extends  to very specific models that allow for a useful Cole-Hopf transform on the level of the microscopic system. In recent years several other models have been identified for which this is the case, see~\cite{bib:demboTsai, bib:corwinTsai, bib:corwinShenTsai, bib:labbe}, but a general proof of universality will not be possible with the Cole-Hopf approach because it only provides us with an equation for $e^{(\lambda/\nu)h}$.

The main difficulty in interpreting the equation for $h$ is that at fixed times $t>0$ the map $x \mapsto h(t,x)$ has no better regularity than the Brownian motion (indeed the law of the Brownian motion is essentially invariant under the dynamics of $h$, see~\cite{bib:funakiQuastel}), and therefore the nonlinearity $(|\nabla h(t,x)|^2 - \infty)$ is the square of a distribution which a priori does not make any sense. This problem was finally overcome in 2011, when Hairer~\cite{bib:hairerKPZ} used a partial series expansion and rough path integration in order to construct the nonlinearity as a continuous bilinear functional on a suitable function space and then to solve the equation. Equivalent constructions have also been carried out using the theory of regularity structures, see~\cite{bib:hairerRegularity, bib:frizHairer}, or paracontrolled distributions~\cite{bib:paracontrolled, bib:kpzReloaded}. This pathwise approach is very robust and it allows to prove convergence of many models to the KPZ equation or other singular SPDEs; see~\cite{bib:hairerQuastel, bib:hairerShen, bib:kpzReloaded, bib:hoshino} for examples where the KPZ equation arises in the limit.

However, the pathwise approach is crucially based on the concept of regularity and it requires very precise quantitative estimates, which usually are not easy to come by. Moreover, it is tailored to semilinear equations (see however the very recent work~\cite{bib:ottoWeber}), and in our example~\eqref{eq:general} we are dealing with a quasilinear system which makes it difficult to apply regularity structures or paracontrolled distributions. Instead we will rely on an alternative approach that was developed in parallel to Hairer's work by Gon\c calves and Jara~\cite{bib:goncalvesJara} and refined by Gubinelli and Jara in~\cite{bib:gubinelliJara}. In these works the KPZ equation is formulated as a martingale problem, and given a stochastic model it is relatively easy to verify whether it solves the KPZ equation in the martingale sense (in particular, only very weak quantitative estimates are required). On the other hand, until recently it was not known whether the martingale description of the solution is sufficient to characterize it uniquely. This problem was solved in~\cite{bib:energyUniqueness}, where it was shown that the refined martingale solution of~\cite{bib:gubinelliJara} is unique. So now the martingale approach provides a very powerful tool for establishing the weak KPZ universality conjecture for a  wide class of models, and it has been successfully applied to many models that still seem out of reach with the pathwise approach, see~\cite{bib:goncalvesJara, bib:goncalvesJaraSethuraman, bib:goncalvesJaraSimon, bib:francoGoncalvesSimon, bib:gubinelliHQ} for examples. The main drawback is that the method is crucially based on the stationarity of the microscopic model, and even on a quite explicit knowledge of its invariant measures. Fortunately these requirements are met quite naturally in most models.

\bigskip

Let us now get to our specific model. We let $p=(1+\sqrt{\varepsilon})/2$ and $q=(1-\sqrt{\varepsilon})/2$ with $\varepsilon\in[0,1]$ in Eq.~\eqref{eq:general} with the aim of studying small perturbations of the reversible case $\varepsilon=0$.
When $\varepsilon=0$ the system is known as the (one-dimensional) \emph{Ginzburg-Landau $\nabla \phi$ interface model}, and it has been intensely studied during the past decades. The hydrodynamic limit for $\phi$ was derived in~\cite{bib:fritz,bib:guoEtAl}. The equilibrium fluctuations were studied in~\cite{bib:spohn, bib:zhu}, and the non-equilibrium fluctuations in~\cite{bib:changYau}; see also~\cite{bib:giacominOllaSpohn} for the equilibrium fluctuations in the multidimensional case where $\Z$ is replace by $\Z^d$. The large deviations were derived in~\cite{bib:donskerVaradhan} in one dimension, and  the multidimensional case was treated in~\cite{bib:funakiNishikawa}, see also~\cite{bib:benArousDeuschel, bib:deuschelEtAl} for large deviations of the stationary distributions. A nice survey on the $\nabla \phi$ model is~\cite{bib:funaki} which contains many further references.

If $\varepsilon$ does not go to zero, the model is conjectured to be in the KPZ universality class. In the recent paper~\cite{bib:sasamotoSpohn15} a special potential $V$ is considered which formally corresponds to $V(x) = \delta(x)$ and under which the dynamics become stochastically integrable, and it is shown that the rescaled fluctuations are in the KPZ universality class; see also~\cite{bib:ferrariSpohnWeiss} for the totally asymmetric case ($\varepsilon = 1$) and~\cite{bib:borodinCorwin, bib:borodinCorwinFerrari} for the totally asymmetric case with potential $V(u) = e^{-u}$. However, in all these works the long time behavior of the one-time marginal at a single site is tracked, while here we are interested in the dynamic behavior on large temporal and spatial scales.

So let now $\varepsilon \rightarrow 0$. We will not actually study the fluctuations of $\phi$ itself, but instead we focus on the height differences $u_t(i) \assign \phi_t (i) - \phi_t (i - 1)$ which solve
\begin{align*} \mathd u_t (i) & = \frac{1}{2} \left\{ \left( 1 + \sqrt{\varepsilon} \right) (V'
   (u_t (i + 1)) - V' (u_t (i))) - \left( 1 - \sqrt{\varepsilon} \right)
   (V' (u_t (i)) - V' (u_t (i - 1))) \right\} \mathd t \\
   &\quad + \mathd W_t (i + 1) - \mathd W_t (i), \qquad (t,i) \in \R_+ \times \Z.
\end{align*}
Splitting the drift into symmetric and 
antisymmetric part (see Section \ref{sec:generatorAndStationaryMeasure}), we obtain
\[ \mathd u_t (i) = \Big( \frac{1}{2} \Delta_D [V' (u_t)] (i) +
   \sqrt{\varepsilon} \nabla^{(2)}_D [V' (u_t)] (i) \Big) \mathd t + \mathd
   \nabla^{(1)}_D W_t (i), \qquad (t,i) \in \R_+ \times \Z, \]
with
\[ \Delta_D f (i) : = f (i + 1) + f (i - 1) - 2 f (i), \hspace{2em}
   \nabla^{(1)}_D f (i) : = f (i + 1) - f (i), \]
\[ \nabla^{(2)}_D f (i) : = \frac{1}{2} (f (i + 1) - f (i - 1)) . \]
As we shall see, there is a one parameter family of stationary measures, given for $\lambda \in
\mathbb{R}$ by
\[
   {\mu}_{\lambda} (\mathd u) = \prod_{j = - \infty}^{\infty} \frac{\exp(\lambda u(j) - V (u(j)))}{Z_{\lambda}} \mathd u(j) =: \prod_{j = - \infty}^{\infty} p_\lambda(u(j)) \dd u(j)
\]
where $Z_{\lambda} > 0$ is a normalization constant. This is true both in
the symmetric ($\varepsilon = 0$) and in the non-symmetric case. We write
\[
   \rho'(\lambda) := \int_\R u \, p_\lambda(u)  \mathd u
\]
for the mean of the coordinates $u(j)$ under $\mu_\lambda$. Due to the stationarity of $u$ it easily follows from the weak law of large numbers that if $u_0 \sim \mu_\lambda$, then for every test function $\eta \in \CS$, the Schwartz functions on $\R$, and for every fixed $t\ge 0$ we have
\[
   \lim_{n \to \infty} \frac{1}{n} \sum_{k \in \Z} u_t ( k) \eta(n^{-1} k) = \rho'(\lambda) \int_\R \eta(x) \mathd x,
\]
where the convergence is in probability. So on large spatial scales $u$ looks constant. The central limit theorem shows that the fluctuations around that constant limiting profile are Gaussian. Our aim is to understand the dynamic fluctuations of $u$ on large time-scales. We fix $\lambda_0 \in \mathbb{R}$ for the rest of the paper.
Choosing the non-symmetry parameter in the SDE as
\[
   \varepsilon = \frac{1}{n}
\]
and under the rescaling $u \rightarrow v^n$ with
\[
   v^n (t, x) = n^{1 / 2} (u (n^2 t, \lfloor n^{} x  - c_n t \rfloor) - \rho'(\lambda_0)), \quad (t,x) \in \R_+ \times \R,
\]
we show that $v^n$ converges to the stochastic Burgers equation (the derivative of the KPZ equation), for a
properly chosen sequence of diverging constants $c_n$. Their presence can heuristically be explained by the fact that
the non-symmetry introduces a net transport of mass into one
direction, which has to be compensated for by observing the system in a moving frame.

To obtain an intuitive understanding why we should expect the stochastic Burgers equation as the scaling limit for the fluctuations, note that the equation satisfied by the rescaled $v^n$ is
\begin{align}
  \label{eq:rescaled eq} \notag
  \mathd v^n_t (x) &= \Big\{ \frac{1}{2} n^{1 / 2}
  \Delta_n [V' (n^{- 1 / 2} v^n_t + \rho' (\lambda_0))] (x) + n^{}
  \nabla^{(2)}_n [V' (n^{- 1 / 2} v^n_t + \rho' (\lambda_0))] (x) - c_n \nabla
  v^n_t \Big\} \mathd t \\
  &\quad
  + \mathd \nabla^{(1)}_n W^n_t (\lfloor x \rfloor), \quad \qquad (t,x) \in \R_+ \times \R, 
\end{align}
where
\[ \Delta_n f (x) := n^2 (f (x + 1 / n) + f (x - 1 / n) - 2 f (x)),
   \hspace{2em} \nabla^{(1)}_n f (x) := n (f (x + 1 / n) - f (x)), \]
\[ \nabla^{(2)}_n f (x) := \frac{n}{2} (f (x + 1 / n) - f (x - 1 / n)), \]
and $W^n$ is a Brownian motion indexed by $\mathbb{Z}$ with quadratic covariation $\mathd [W^n (i), W^n (j)]_t = n \delta_{i, j} \mathd t$. Therefore, the martingale contribution in~\eqref{eq:rescaled eq} converges to the spatial derivative of a space-time white noise. Concerning the symmetric contribution, a Taylor expansion gives
\begin{align}\label{eq:formal_linear} \nonumber
  \frac{1}{2} n^{1 / 2} \Delta_n [V' (n^{- 1 / 2} v^n_t + \rho' (\lambda_0))]
  & = \frac{1}{2} n^{1 / 2} \Delta_n \{V' (\rho' (\lambda_0)) + V'' (\rho'
     (\lambda_0)) (n^{- 1 / 2} v^n_t) + O (n^{- 1})\} \\
  & = \frac{1}{2} V'' (\rho' (\lambda_0)) \Delta_n v^n_t + O (n^{- 1 / 2}),
\end{align}
so we might guess that in the limit only the Laplace operator survives. This is true, however the diffusion constant will not
be $V'' (\rho' (\lambda_0))$. The problem with this formal
expansion is that $(n^{- 1 / 2} v^n_t)$ converges to $0$ in $\CS'$, but not in
any function space, and therefore the corrector terms appearing in the Taylor expansion cannot be controlled uniformly in $n$. Similarly we expand the asymmetric contribution on the right hand
side of~(\ref{eq:rescaled eq}) and obtain formally
\begin{align}\label{eq:formal_nonlinear} \nonumber
 &n \nabla^{(2)}_n [V' (n^{- 1 / 2} v^n_t + \rho' (\lambda_0))] - c_n
 \nabla v^n \\ \nonumber
 &\quad= n \nabla^{(2)}_n \Big\{ V' (\rho' (\lambda_0)) + V'' (\rho' (\lambda_0))
 n^{- 1 / 2} v^n_t + \frac{1}{2} V^{(3)} (\rho' (\lambda_0)) (n^{- 1 / 2}
 v^n_t)^2 + O (n^{- 3 / 2}) \Big\} - c_n \nabla v^n_t \\
 &\quad= n^{1 / 2} V'' (\rho' (\lambda_0)) \nabla^{(2)}_n v^n_t - c_n \nabla v^n_t
 + \frac{1}{2} V^{(3)} (\rho' (\lambda_0)) \nabla^{(2)}_n (v^n_t)^2 + O (n^{- 1 / 2}).
\end{align}
We see that the constants $(c_n)$ are needed to absorb the diverging linear transport term on the
right hand side and thus leave the quadratic term in the limit. The real picture is again more complicated and from this
formal expansion we did not get the right constants, but at least it gives us
an idea why in the limit we should be able to replace the complicated
nonlinear term with a quadratic contribution.

To prove the convergence we will not actually work with the piecewise constant extension of the
lattice function $u$ to a function on $\mathbb{R}$, but instead look at the unknown as a distribution in continuous space obtained by considering linear combinations of Dirac delta function at each lattice point. This  makes the notation
slightly more convenient. We are thus interested in the limit for $n
\rightarrow \infty$ of the generalized random field
\begin{equation} \label{eq:vn working def} v^n_t = \sum_k n^{1 / 2} (u_{n^2 t} (k) - \rho'
   (\lambda)) n^{- 1} \delta_{n^{- 1} k + c_n t} . \end{equation}
The main result of this paper is  the following universality result, which states that the limiting system solves the stochastic Burgers equation and only depends on the second and third centered moments of the coordinates $u(j)$ under $\mu_{\lambda_0}$, but not on the detailed shape of the potential $V$. To state it precisely, let us introduce the notations
\[
   m_{k,\lambda} := \int_\R (u - \rho'(\lambda))^k p_\lambda(u) \mathd u, \qquad \sigma^2_\lambda := m_{2,\lambda}
\]
for $k \in \N$ and $\lambda \in \R$. We also need the following assumption on $V$.

\begin{assumption}[V]\label{ass:v}
  Assume $V = \varphi + \psi$, where $\varphi \in C^2$ and there exists $C>0$ with $\varphi''(x) \in [1/C,C]$ for all $x \in \R$, and where $\psi \in C^2_b$.
\end{assumption}
\newcommand{\assumptionV}{\hyperref[ass:v]{Assumption (V)}\xspace}

\begin{theorem}\label{thm:main result}
  Let $V$ satisfy \assumptionV and
  let the rescaled stationary fluctuations of $v^n$ be given by~(\ref{eq:vn working def}), where we set
  \[ c_n \assign n^{1 / 2} \sigma_{\lambda_0}^{-2} . \]
  Then $(v^n)_{n \in \mathbb{N}}$ converges weakly in $C \left( [0, 1],
  \CS' \right)$ to the unique stationary energy solution $u$ of the
  stochastic Burgers equation
  \begin{equation}\label{eq:SBE}
     \partial_t u = \frac{ 1}{2 \sigma_{\lambda_0}^{2}}
     \Delta u - \frac{m_{3,\lambda_0}}{2 \sigma_{\lambda_0}^6} \nabla u^2 + \nabla \xi,
  \end{equation}
  where $\xi$ is a space-time white noise and the equation is interpreted in
  the sense of \cite{bib:energyUniqueness}. For all fixed times $t\ge 0$,
  $u (t)$ is a space white noise with variance $\sigma_{\lambda_0}^2$.
\end{theorem}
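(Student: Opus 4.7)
The plan is to adapt the martingale-solution strategy of Gon\c calves--Jara~\cite{bib:goncalvesJara}, refined by Gubinelli--Jara~\cite{bib:gubinelliJara}, and then to invoke the uniqueness theorem for stationary energy solutions proved in~\cite{bib:energyUniqueness}. The argument has three steps: (a) tightness of $(v^n)_{n\in\N}$ in $C([0,1],\CS')$; (b) identification of every subsequential limit as a stationary energy solution of~\eqref{eq:SBE}; (c) uniqueness, which forces the whole sequence to converge. The assertion about the one-time marginal comes for free: since $\mu_{\lambda_0}$ is invariant and its coordinates are i.i.d.\ with variance $\sigma^2_{\lambda_0}$, the classical central limit theorem gives $v^n_t(\eta) \to \mathcal{N}(0, \sigma^2_{\lambda_0}\|\eta\|_{L^2}^2)$ for every fixed $t$ and $\eta\in\CS$, identifying the marginal as space white noise of variance $\sigma^2_{\lambda_0}$.

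For each $\eta\in\CS$, Dynkin's formula applied to $v^n_t(\eta)$ yields the decomposition
\[
  v^n_t(\eta) = v^n_0(\eta) + \int_0^t \mathcal{A}^n_s(\eta)\,\mathd s + M^{n,\eta}_t,
\]
where the drift $\mathcal{A}^n_s(\eta)$ is obtained by pairing the right-hand side of~\eqref{eq:rescaled eq} with $\eta$ and $M^{n,\eta}$ is an $L^2$-martingale whose predictable bracket is computable directly from the Brownian driving term. Using stationarity and summation by parts, $\langle M^{n,\eta}\rangle_t \to t\,\|\nabla\eta\|_{L^2}^2$, so in the limit the martingale becomes $\int_0^t \int_\R \nabla\eta(x)\,\xi(\mathd s,\mathd x)$, matching the noise in~\eqref{eq:SBE}.

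The drift splits into a symmetric and an antisymmetric contribution that must both be tamed by a Boltzmann--Gibbs type argument, the key analytical input being a spectral-gap / log-Sobolev bound on the local canonical measures derived from $\mu_{\lambda_0}$ that follows from \assumptionV via Bakry--\'{E}mery on the strictly convex part $\varphi$ and a Holley--Stroock perturbation on the bounded remainder $\psi$. The first-order Boltzmann--Gibbs principle replaces the local observable $V'(u(k))$ by its best $L^2(\mu_{\lambda_0})$-approximation $\lambda_0 + \sigma^{-2}_{\lambda_0}(u(k)-\rho'(\lambda_0))$, the slope being forced by the identities $E_{\lambda_0}[V'(u)]=\lambda_0$ and $\mathrm{Cov}_{\lambda_0}(V'(u),u)=1$; this is the origin of the diffusivity $1/(2\sigma^2_{\lambda_0})$ in~\eqref{eq:SBE}. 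After this substitution the antisymmetric drift produces a linear transport term exactly cancelled by the choice $c_n = n^{1/2}\sigma^{-2}_{\lambda_0}$, while the surviving higher-order contribution is handled by the second-order Boltzmann--Gibbs principle, which replaces the remaining local observable by its projection onto the two-particle chaos, yields the quadratic term $-\tfrac{m_{3,\lambda_0}}{2\sigma^6_{\lambda_0}}\nabla^{(2)}_n(v^n_s)^2$, and simultaneously produces the energy estimate required to identify the limit as an energy solution in the sense of~\cite{bib:energyUniqueness}.

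Tightness in $C([0,1],\CS')$ follows from Mitoma's criterion applied to the real-valued processes $v^n(\eta)$, for which the decomposition above and the uniform stationary bound $\E[|v^n_s(\eta)|^2]\le C\|\eta\|_{L^2}^2$ yield the required equicontinuity via Aldous' criterion. Combining tightness with the identification of any subsequential limit as a stationary energy solution, the uniqueness result of~\cite{bib:energyUniqueness} forces full convergence of the sequence. The main obstacle is the second-order Boltzmann--Gibbs principle in this setting: the existing proofs are written for interacting particle systems with a bounded generator on a finite state space, whereas here the continuous-spin, quasilinear Ginzburg--Landau dynamics is driven by an unbounded generator and the variance-against-Dirichlet-form estimates underlying the principle must be redone; \assumptionV is indispensable precisely because it supplies the uniform spectral-gap and log-Sobolev bounds on which those estimates rest.
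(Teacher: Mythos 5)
Your overall architecture is the same as the paper's: tightness of $(v^n)$ in $C([0,1],\CS')$, identification of every limit point as a stationary energy solution, and then the uniqueness theorem of~\cite{bib:energyUniqueness}, with the fixed-time marginal identified by the CLT under $\mu_{\lambda_0}$; the constants you extract ($\partial_\rho\varphi_{V'}(\rho'(\lambda_0))=\sigma_{\lambda_0}^{-2}$, $\partial_{\rho\rho}\varphi_{V'}(\rho'(\lambda_0))=-m_{3,\lambda_0}/\sigma_{\lambda_0}^{6}$) are correct. However, the route you propose for the key analytic input does not work. The Boltzmann--Gibbs estimates require a Poincar\'e inequality for the \emph{canonical} measures $\nu^{\ell}_{\rho}=\mu_{\lambda}(\cdot\mid \bar u^{\ell}=\rho)$ with constant $C\ell^{2}$, uniformly in $\rho$ and, crucially, in the block size $\ell$, which is later taken of order $n$ (diffusive term) and $\delta n$ (nonlinearity). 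Bakry--\'Emery on $\varphi$ plus Holley--Stroock on $\psi$ does not give this: the bounded perturbation entering the conditioned measure is $\sum_{i<\ell}\psi(u(i))$, whose oscillation grows linearly in $\ell$, so the Holley--Stroock correction is exponential in $\ell$ and destroys the polynomial bound. This is exactly why the paper relies on Caputo's uniform Poincar\'e inequality for unbounded conservative spin systems (Lemma~\ref{lem:spectral gap}); also, no log-Sobolev inequality is needed, only the spectral gap.

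Two further ingredients are missing from your sketch. First, the second-order Boltzmann--Gibbs principle is not purely a variance-versus-Dirichlet-form statement: it also needs a second-order equivalence of ensembles, i.e.\ the expansion of $E_{\lambda_0}[F\mid\bar u^{N}]$ around $\varphi_F(\rho'(\lambda_0))$ to quadratic order with error $(\ell/N)^{3/2}$ (Proposition~\ref{prop:equiv-ensem}), whose proof uses Caputo's uniform local limit theorem with Edgeworth corrections; your outline never addresses this static estimate. Second, the uniqueness result of~\cite{bib:energyUniqueness} applies to energy solutions in the refined, controlled sense: besides the forward martingale decomposition one must verify that the drift $\mathcal{A}$ has zero quadratic variation (this follows from the $|t-s|^{3/2}$ modulus in~\eqref{eq:zero-qv}, obtained via the first-order principle, Theorem~\ref{thm:first order BG}) and, importantly, that the \emph{time-reversed} process admits the same type of decomposition. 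Establishing the latter requires knowing the reversed dynamics (generator $\CG_S-\alpha\CG_A$, Corollary~\ref{cor:reversedEvolution}, which in turn rests on the construction of the infinite-volume dynamics and invariance of $\mu_{\lambda}$) and repeating the tightness and identification arguments for $\hat u_t=u_{T-t}$. Without these steps the identification of the limit as an energy solution, and hence the appeal to uniqueness, is incomplete.
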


The proof of this theorem will occupy the rest of the paper. We will follow the general approach developed by Gon\c calves and Jara in~\cite{bib:goncalvesJara}. Of course, the convergence also holds in $C([0,T], \CS')$ for arbitrary $T>0$ or more generally locally in time on $C([0,\infty), \CS')$.

\begin{remark}
   We need the assumptions on the potential for two reasons: first, they guarantee that $V'$ is Lipschitz continuous and thus allow us to construct the dynamics of $v$ by solving the corresponding SDE. Second, we apply several results of Caputo~\cite{bib:caputo} who needs $V$ to be a perturbation of a uniformly convex function (but allows faster-than-linear growth). In principle it should be possible to relax the Lipschitz continuity assumption and to obtain the same result for all potentials that satisfy the assumptions of Caputo, or more generally of Menz and Otto~\cite{bib:menzOtto}.
\end{remark}

\begin{remark}
   At first sight the constants in front of the Laplacian and the nonlinearity in~\eqref{eq:SBE} look very different from the ones that we derived in~{\normalfont (\ref{eq:formal_linear}), (\ref{eq:formal_nonlinear})}. However, Lemma~\ref{lem:constants} below will show that
   \[
      \frac{1}{\sigma_{\lambda_0}^{2}} = \partial_{\rho} \varphi_{V'}(\rho'(\lambda_0)), \qquad -\frac{m_{3,\lambda_0}}{\sigma_{\lambda_0}^6} = \partial_{\rho\rho} \varphi_{V'}(\rho'(\lambda_0)),
   \]
   where $\varphi_{V'}(\rho) = \int_\R V'(u) p_{h'(\rho)}(u) \mathd u$ and $h'$ is the inverse function of $\rho'$, that is $p_{h'(\rho)}$ has mean $\rho$. So the correct diffusion constant $\partial_{\rho} \varphi_{V'}(\rho'(\lambda_0))$ can be interpreted as an averaged version of the constant $V''(\rho'(\lambda_0))$ appearing in~\eqref{eq:formal_linear}.
\end{remark}

The structure of the paper is as follows. Below we introduce some basic notation that we will fix throughout the paper. In Section~\ref{sec:generatorAndStationaryMeasure} we recall / rigorously show some basic features of the dynamics that are in principle well known. Section~\ref{sec:second-order-BG} is devoted to the proof of the second order Boltzmann-Gibbs principle, which is the main tool for establishing the convergence.  Section~\ref{sec:convergence} contains the convergence proof. Appendix~\ref{app:periodic approximation} collects some results on infinite-dimensional SDEs with additive noise and in Appendix~\ref{app:equiv-ensem-proof} we give the proof of a second order equivalence of ensembles result which is needed to derive the second order Boltzmann-Gibbs principle.

\paragraph{Acknowledgements} We would like to thank Tadahisa Funaki and Herbert Spohn for suggesting the problem studied in this paper.

\paragraph{Notation} If $\mu$ is a probability measure on $\R^\Z$ we will always write $\E_\mu$ for the expectation with respect to the Markov process $u$ started in the initial distribution $\mu$. That is, $\E_\mu$ is an expectation on the path space $C([0,\infty), \R^\Z)$. For $\lambda \in \R$ we write $E_\lambda$ for the expectation under $\mu_\lambda$ on $\R^\Z$ and $\langle \cdummy, \cdummy \rangle_{\lambda}$ for
the inner product in $L^2 ({\mu}_{\lambda})$, as well as $\mathrm{var}_\lambda$ for the variance under $\mu_\lambda$. We also define
\[
   \rho(\lambda):= \log Z_\lambda = \log \int_\R \exp (\lambda u - V(u)) \dd u,
\]
which is easily seen to be consistent with the previously introduced notation
\[
   \rho'(\lambda) = \int_\R u \, p_\lambda(u)  \mathd u.
\]
Furthermore, we have $\rho''(\lambda) = \sigma^2_\lambda > 0$ and therefore $\rho$ is
strictly convex. Let $h$ be its Legendre transform.
Then $\rho'( h'(x) ) = x$. That is, $h'(\rho)$ tells us which parameter $\lambda$ to choose so that $p_\lambda$ has mean $\rho$.

\section{Derivation of the generator and the stationary measure}\label{sec:generatorAndStationaryMeasure}

Throughout this section we fix $N \in \mathbb{N}$ and consider the periodic
model $v : [0, T] \times \mathbb{Z}_N \rightarrow \mathbb{R}$ (where
$\mathbb{Z}_N =\mathbb{Z}/ (N\mathbb{Z})$). We derive its stationary
measures and the decomposition of its generator in the symmetric and the
antisymmetric part, which in particular allows us to deduce the structure of
the time reversed process. The results of Appendix~\ref{app:periodic
approximation} then allow us to extend this to the model in infinite volume.
Let
\begin{align}
  \label{eq:periodic GL}\nonumber
  \mathd v_t (i) & = \frac{1}{2} \left\{ (1 + \alpha) (V'
  (v_t (i + 1)) - V' (v_t (i))) -  (1 - \alpha) (V' (v_t (i)) - V'
  (v_t (i - 1))) \right\} \mathd t \\ 
  &\qquad  + \mathd W_t (i + 1) - \mathd W_t (i),  \qquad (t,i) \in \R_+ \times \Z_N,
\end{align}
 where $(W (i) : i \in \mathbb{Z}_N)$ is a family of
independent standard Brownian motions and $\alpha \in \mathbb{R}$. We will
consider the measures
\[ {\mu}_{\lambda}^N (\mathd v) = \prod_{i \in \mathbb{Z}_N} p_{\lambda}
   (v(i)) \mathd v(i) \]
on $\mathbb{Z}_N$ and write $E_{{\mu}^N_{\lambda}}$ for the expectation
under ${\mu}^N_{\lambda}$, as well as $\langle F, G
\rangle_{{\mu}^N_{\lambda}} \assign E_{{\mu}^N_{\lambda}} [F G]$. Let $F \colon
\mathbb{R}^{\mathbb{Z}_N} \rightarrow \mathbb{R}$ be a $C^2$ function.
Applying It{\^o}'s formula to $F (v_t)$, we get
\begin{align*}
   F (v_t) - F (v_0) & = \int_0^t \sum_{i \in \mathbb{Z}_N} \partial_i F (v_s)\mathd v_s (i) + \frac{1}{2} \int_0^t \sum_{i, j \in \mathbb{Z}_N} \partial_{i, j}^2 F (v_s) \mathd \langle v (i), v (j) \rangle_s \\
   & = \int_0^t \sum_{i \in \mathbb{Z}_N} \partial_i F (v_s) \left\{ \left(\frac{1}{2} \Delta_D [V' (v_s)] (i) + \alpha \nabla^{(2)}_D [V' (v_s)] (i)
   \right) \mathd s + \mathd \nabla^{(1)}_N w_s (i) \right\} \\
   & \qquad + \frac{1}{2} \int_0^t \sum_{i \in \mathbb{Z}_N} (2 \partial_{i, i}^2 F (v_s) - \partial_{i, i - 1}^2 F (v_s) - \partial^2_{i, i + 1} F (v_s)) \mathd s.
\end{align*}
So on sufficiently nice  functions, the generator $\CG^{\alpha}$ of $v$ acts as
\begin{align*}
   \CG^{\alpha} & = \frac{1}{2} \sum_{i \in \mathbb{Z}_N} \{ - (V' (v (i + 1)) - V' (v (i))) (\partial_{i + 1} - \partial_i) + \alpha (V' (v (i + 1)) - V'(v (i - 1))) \partial_i \} \\
   &\quad + \frac{1}{2} \sum_{i \in \mathbb{Z}_N} (\partial_{i + 1} - \partial_i)^2.
\end{align*}
\begin{lemma}
  \label{lem:generator}Define the operators
  \[ \CG_S = \frac{1}{2} \sum_{i \in \mathbb{Z}_N} \{ - (V' (v (i + 1)) - V'
     (v (i))) (\partial_{i + 1} - \partial_i) + (\partial_{i + 1} -
     \partial_i)^2 \}, \]
  and
  \[ \CG_A = \frac{1}{2} \sum_{i \in \mathbb{Z}_N} (V' (v (i + 1)) - V' (v (i
     - 1))) \partial_i . \]
  Then we have $\CG^{\alpha} = \CG_S + \alpha \CG_A$ and for all $F, G \in C^2
  (\mathbb{R}^{\mathbb{Z}_N}, \mathbb{R})$ with polynomial growth of their
  partial derivatives up to order $2$ and for all $\lambda \in \mathbb{R}$
  \[ \left\langle F, \CG_S G \right\rangle_{{\mu}^N_{\lambda}} =
     \left\langle \CG_S F, G \right\rangle_{{\mu}^N_{\lambda}},
     \hspace{2em} \left\langle F, \CG_A G
     \right\rangle_{{\mu}^N_{\lambda}} = - \left\langle \CG_A F, G
     \right\rangle_{{\mu}^N_{\lambda}} . \]
  In particular, $E_{{\mu}^N_{\lambda}} \left[ \CG^{\alpha} F \right] =
  0$.
\end{lemma}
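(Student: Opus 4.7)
The decomposition $\CG^\alpha = \CG_S + \alpha \CG_A$ is an immediate rearrangement of the definition of $\CG^\alpha$ into the purely diffusive/symmetric-drift piece and the antisymmetric-drift piece, so the real content is the self-adjointness and skew-adjointness statements. I would prove both by integration by parts against the explicit product density $p_\lambda^N(v) = \prod_{i\in\Z_N} Z_\lambda^{-1}\exp(\lambda v(i)-V(v(i)))$, using the key identity
\[
\partial_i \log p_\lambda^N(v) = \lambda - V'(v(i)).
\]
Throughout, the polynomial growth assumption on $F,G$ together with the Gaussian-like tails inherited from \assumptionV guarantees that boundary terms at infinity vanish, so I will use this freely.

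For $\CG_S$ I would introduce the discrete difference operator $D_i = \partial_{i+1}-\partial_i$ and observe that a short computation using the identity above gives
\[
\tfrac12\,p_\lambda^N\bigl\{-(V'(v(i+1))-V'(v(i)))D_i G + D_i^2 G\bigr\} = \tfrac12\,D_i\bigl(p_\lambda^N\, D_i G\bigr),
\]
so that $\CG_S$ is in divergence form with weight $p_\lambda^N$. Integration by parts against $F$ then yields $\langle F, \CG_S G\rangle_{\mu_\lambda^N} = -\tfrac12\sum_i \int (D_i F)(D_i G) p_\lambda^N \dd v$, which is manifestly symmetric in $F$ and $G$.

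For $\CG_A$ I would exploit the fact that it is a first-order operator, so by Leibniz the desired skew-symmetry is equivalent to $E_{\mu_\lambda^N}[\CG_A H]=0$ for $H=FG$, and hence for any sufficiently nice $H$. To establish this I would first re-index the sum so that $V'(v(i))$ is the multiplier and $\partial_{i-1}-\partial_{i+1}$ is the differential operator, i.e.
\[
\CG_A = \tfrac12\sum_{i\in\Z_N} V'(v(i))(\partial_{i-1}-\partial_{i+1}).
\]
Since $V'(v(i))$ does not depend on $v(i\pm 1)$, integration by parts in the $v(i\pm 1)$ variables pulls out the logarithmic-derivative factors $\lambda - V'(v(i\pm 1))$, and after cancelling the $\lambda$-contributions the remaining expression becomes
\[
E_{\mu_\lambda^N}[\CG_A H] = -\tfrac12\sum_{i\in\Z_N} E_{\mu_\lambda^N}\bigl[\bigl(V'(v(i))V'(v(i+1))-V'(v(i))V'(v(i-1))\bigr)H\bigr].
\]
The sum in the integrand telescopes to zero on the periodic lattice $\Z_N$, giving the claim. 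This telescoping is the one place where working on the torus rather than on $\Z$ is essential, and it is the step I expect to be the only real subtlety — it highlights why one first develops the theory periodically and then passes to the whole line via the approximation results of Appendix~\ref{app:periodic approximation}.

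Finally, the claim $E_{\mu_\lambda^N}[\CG^\alpha F]=0$ follows at once: $E_{\mu_\lambda^N}[\CG_S F] = \langle 1, \CG_S F\rangle_{\mu_\lambda^N} = \langle \CG_S 1, F\rangle_{\mu_\lambda^N} = 0$ since $\CG_S 1 = 0$, and $E_{\mu_\lambda^N}[\CG_A F]=0$ by taking $H=F$ in the computation above.
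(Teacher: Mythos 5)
Your proof is correct and follows essentially the same route as the paper: the divergence-form identity for $\CG_S$ is just the paper's computation $\CG_S=-\frac12\sum_i(\partial_{i+1}-\partial_i)^\ast(\partial_{i+1}-\partial_i)$ written in terms of the explicit density (via $\partial_i\log p_\lambda^N=\lambda-V'(v(i))$), and your treatment of $\CG_A$ — integration by parts producing the factors $\lambda-V'(v(i\pm1))$ followed by the periodic telescoping — is the same cancellation the paper uses, only reached through the minor shortcut of reducing skew-symmetry to $E_{\mu_\lambda^N}[\CG_A(FG)]=0$ by the Leibniz rule. No gaps; the growth/tail justification for discarding boundary terms matches what \assumptionV provides.
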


\begin{proof}
  The adjoint of
  $\partial_i$ in $L^2 ({\mu}_{\lambda})$ is given by $\partial_i^{\ast} =
  - \partial_i - (\lambda - V' (v(i)))$, and therefore
  \[ (\partial_{i + 1} - \partial_i)^{\ast} = - \{(\partial_{i + 1} -
     \partial_i) - (V' (v(i + 1)) - V' (v(i)))\}, \]
  and
  \begin{align*}
     &\{ - (V' (v (i + 1)) - V' (v (i))) (\partial_{i + 1} - \partial_i) + (\partial_{i + 1} -
     \partial_i)^2 \}^{\ast} & \\
     &\qquad = - (\partial_{i + 1} - \partial_i)^\ast \{V' (v (i + 1)) - V' (v (i))\} \\
     &\qquad \quad + (\partial_{i + 1} - \partial_i)^\ast \{- ((\partial_{i + 1} -  \partial_i) - (V' (v(i + 1)) - V' (v(i))))\} \\
     &\qquad= - (\partial_{i + 1} - \partial_i)^\ast (\partial_{i + 1} -  \partial_i) \\
     &\qquad = (\partial_{i + 1} -  \partial_i)^2 - (V' (v (i + 1)) - V' (v (i))) (\partial_{i + 1} - \partial_i),
  \end{align*}  
%
%
%
%
  from where we readily see that $(\CG_S)^\ast = \CG_S$.
  For $\CG_A$ we obtain
  \begin{align*}
   ((V' (v(i + 1)) - V' (v(i - 1))) \partial_i)^{\ast}
   &= \partial_i^{\ast} (V' (v(i + 1)) - V' (v(i - 1))) \\
   & = (- \partial_i - (\lambda - V' (v_i))) (V' (v(i + 1)) - V' (v(i - 1))) \\
   &= - (V' (v(i + 1)) - V' (v(i - 1))) \partial_i \\
   &\quad - (\lambda - V' (v(i))) (V' (v(i + 1)) - V' (v(i - 1))).
  \end{align*}
  But since it is a telescopic sum, we have
  \[ \sum_{i \in \mathbb{Z}_N} \{ - (\lambda - V' (v(i))) (V' (v(i + 1)) - V' (v(i - 1))) \} = 0, \]
  and therefore $\left( \CG_A \right)^{\ast} = - \CG_A$.
\end{proof}

\begin{corollary}
  \label{cor:reversedEvolution}For all $\alpha, \lambda \in \mathbb{R}$ the
  measure ${\mu}_{\lambda}$ is invariant under the evolution of
  \begin{equation}\label{eq:sbe with alpha}
     \mathd u_t (i) = \Big( \frac{1}{2} \Delta_D [V'
    (u_t)] (i) + \alpha \nabla^{(2)}_D [V' (u_t)] (i) \Big) \mathd t +
    \mathd \nabla^{(1)}_D W_t (i), \hspace{2em} (t,i) \in \R_+ \times \mathbb{Z}.
  \end{equation}
  Moreover, if $u_0$ is distributed according to ${\mu}_{\lambda}$ and we
  fix $T > 0$ and set $\hat{u}_t = u_{T - t}$, $t \in [0, T]$, then $\hat{u}$
  is a weak solution of the SDE
  \[ \mathd \hat{u}_t (i) = \Big( \frac{1}{2} \Delta_D [V' (\hat{u}_t)] (i) -
     \alpha \nabla^{(2)}_D [V' (\hat{u}_t)] (i) \Big) \mathd t + \mathd
     \nabla^{(1)}_D \hat{W}_t (i), \hspace{2em} (t,i) \in [0,T] \times \mathbb{Z}, \]
  where $(\hat{W} (i))_{i \in \mathbb{Z}}$ is a family of independent
  standard Brownian motions in the filtration generated by $\hat{u}$.
\end{corollary}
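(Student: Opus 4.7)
The plan is to prove both assertions first in the periodic setting, where Lemma~\ref{lem:generator} applies directly, and then to transfer them to the infinite-volume dynamics~\eqref{eq:sbe with alpha} by means of the periodic approximation constructed in Appendix~\ref{app:periodic approximation}.

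For invariance of $\mu_\lambda$, I would start from the identity $E_{\mu_\lambda^N}[\CG^\alpha F] = 0$ supplied by Lemma~\ref{lem:generator}, which expresses that $\mu_\lambda^N$ is infinitesimally invariant under the periodic dynamics. Since \assumptionV{} makes $V'$ Lipschitz, the periodic SDE~\eqref{eq:periodic GL} admits a unique strong solution, and a standard argument (differentiating $t \mapsto E_{\mu_\lambda^N}[F(v_t)]$ for cylindrical test functions $F$ with controlled growth) upgrades infinitesimal invariance to genuine stationarity of $\mu_\lambda^N$ under the finite-volume flow. Sending $N \to \infty$ through Appendix~\ref{app:periodic approximation} then transports stationarity to $\mu_\lambda$ on $\R^\Z$.

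For the time-reversal claim I would again work in finite volume first. Lemma~\ref{lem:generator} gives $(\CG_S)^* = \CG_S$ and $(\CG_A)^* = -\CG_A$ in $L^2(\mu_\lambda^N)$, so the $L^2(\mu_\lambda^N)$-adjoint of $\CG^\alpha = \CG_S + \alpha \CG_A$ equals $\CG^{-\alpha} = \CG_S - \alpha \CG_A$. By the standard theory of reversal of stationary Markov processes (the reversed transition semigroup coincides with the $L^2(\mu)$-adjoint of the forward one), $\hat v_t \assign v_{T-t}$ is Markov with generator $\CG^{-\alpha}$. It\^o's formula applied to $\hat v$ then shows that for each site $i$ the process
\[
 \hat M_t(i) \assign \hat v_t(i) - \hat v_0(i) - \int_0^t \Big( \tfrac{1}{2}\Delta_D[V'(\hat v_s)](i) - \alpha \nabla_D^{(2)}[V'(\hat v_s)](i) \Big) \mathd s
\]
is a continuous $(\CF_t^{\hat v})$-local martingale whose joint quadratic covariation coincides with that of $\bigl(\nabla_D^{(1)} W(i)\bigr)_{i}$, namely $\mathd \langle \hat M(i), \hat M(j)\rangle_t = (2\delta_{ij} - \delta_{|i-j|,1})\, \mathd t$. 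Invoking a Brownian-motion representation theorem for such a martingale field, one obtains an i.i.d.\ family $\hat W = (\hat W(i))$ of standard Brownian motions with $\hat M(i) = \nabla_D^{(1)} \hat W(i)$, which is exactly the reversed SDE in the statement. Passing once more to the limit via Appendix~\ref{app:periodic approximation} delivers the claim on $\Z$.

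The main technical obstacle is the Brownian-motion representation at the end: because the discrete gradient $\nabla_D^{(1)}$ has the constants in its kernel, $\hat W$ is not literally determined by $\hat M$ alone, so recovering it requires either a slight enlargement of the probability space (to fix a constant of integration by adjoining one independent Brownian motion) or, equivalently, working directly with the distribution-valued noise $\nabla_D^{(1)}\hat W$ and treating $\hat W$ only as convenient notation; the prescribed covariance structure of $\hat M$ then automatically makes the reconstructed $(\hat W(i))$ an i.i.d.\ family of standard Brownian motions adapted to the (possibly enlarged) filtration. A secondary point, handled entirely by Appendix~\ref{app:periodic approximation} and relying on the Lipschitz continuity of $V'$, is the transfer of both invariance and the time-reversal identity from the periodic approximations to the infinite lattice.
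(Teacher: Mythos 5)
Your proposal is correct and follows essentially the same route as the paper: prove invariance and the time-reversal identity for the periodic finite-volume dynamics using Lemma~\ref{lem:generator}, identify the reversed generator as $\CG_S-\alpha\CG_A$, and transfer everything to $\Z$ via the approximation result of Appendix~\ref{app:periodic approximation}. The only difference is bookkeeping: where you invoke ``standard arguments,'' the paper cites Echeverria's criterion to pass from infinitesimal invariance of $\mu^N_\lambda$ to genuine stationarity (the naive differentiation of $t\mapsto E_{\mu^N_\lambda}[F(v_t)]$ needs exactly this kind of justification), and it cites finite-dimensional diffusion theory (Millet--Nualart--Sanz) for the step from the reversed generator to the reversed SDE, which replaces your It\^o-plus-martingale-representation argument.
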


\begin{proof}
  Let us restrict $W$ and the initial condition $u_0$ to $[- N / 2, N / 2) \cap
  \mathbb{Z}$ and write $W^N$ and $v^N_0$ for the periodic extension of the
  restriction. Then we can interpret $W^N$ and $v^N_0$ as functions defined on
  $\mathbb{Z}_N$, and consider the solution $v^N$ to~(\ref{eq:periodic GL}) \
  started in $v_0^N$ and forced by $W^N$. For $v^N$ we can now apply
  Lemma~\ref{lem:generator} and Echeverria's criterion~\cite[Theorem 4.9.17]{bib:ethierKurtz}
  to obtain that
  ${\mu}^N_{\lambda}$ is a stationary distribution. It is then a standard
  result that the stationary time reversed process $\hat{v}^N$ is a Markov
  process with infinitesimal generator $\CG_S - \alpha \CG_A$. But since we
  are dealing with a finite dimensional diffusion, this simply means that
  $\hat{v}^N$ is a weak solution of the SDE
  \[ \mathd \hat{v}^N_t (i) = \Big( \frac{1}{2} \Delta_D [V' (\hat{v}^N_t)]
     (i) - \alpha \nabla^{(2)}_D [V' (\hat{v}^N_t)] (i) \Big) \mathd t +
     \mathd \nabla^{(1)}_D \hat{W}^N_t (i), \hspace{2em} i \in \mathbb{Z}_N .
  \]
  Alternatively, the finite dimensional case is also treated in
  \cite[Theorem 2.3]{bib:milletEtAl}.
  
  The result for $u$ and $\hat{u}$ now follows by sending $N \rightarrow \infty$, see
  Theorem~\ref{thm:periodic approximation}, because if we interpret $v^N$ as a
  periodic function on $\mathbb{Z}$, then it agrees with the solution started
  in $v_0^N$ and forced by $W^N$, and it is not hard to see that $(v_0^N, W^N)
  \rightarrow (v_0, W)$ in the spaces considered in Appendix~\ref{app:periodic
  approximation}.
\end{proof}

Let us introduce some suitable test functions to describe the generator of the dynamics in infinite volume. We say that $F \colon \mathbb{R}^{\mathbb{Z}}
\rightarrow \mathbb{R}$ is a \tmtextit{local function} if it only depends on
a finite number of coordinates, and we write $F \in C^{\infty}_c
(\mathbb{R}^{\mathbb{Z}}, \mathbb{R})$ if $F$ is a local function that has
continuous partial derivatives of every order and that is compactly supported
if we consider it as a map defined on the finite-dimensional subspace of
variables in which $F$ is not constant. Then $C^{\infty}_c
(\mathbb{R}^{\mathbb{Z}}, \mathbb{R})$ is dense in $L^2
({\mu}_{\lambda})$ for any $\lambda \in \mathbb{R}$. Indeed, local
functions can be identified with functions on $\mathbb{R}^d$ for some $d \in
\mathbb{N}$, and of course $C^{\infty}_c (\mathbb{R}^d,
\mathbb{R})$ is dense in $L^2 (\mathbb{R}^d, {\mu}_{\lambda}^d)$.
Furthermore, it follows from the martingale convergence theorem that every
function in $L^2 ({\mu}_{\lambda})$ can be approximated by local
functions; simply consider the conditional expectations on $\sigma (u (j) : |
j | \leqslant m)$ for $m \in \mathbb{N}$.

In the following we will always take $\alpha = \sqrt{\varepsilon}$ for
$\varepsilon = 1 / n$ in~(\ref{eq:sbe with alpha}). By
Corollary~\ref{cor:reversedEvolution}, the maps
\[ T_t F (u) =\mathbb{E}_u [F (u_t)], \hspace{2em} t \geqslant 0, \]
define a strongly continuous contraction semigroup on $L^2
({\mu}_{\lambda})$ (to see the strong continuity first approximate a
general $F \in L^2 ({\mu}_{\lambda})$ by functions in $C^{\infty}_c
(\mathbb{R}^{\mathbb{Z}}, \mathbb{R})$). We will denote the infinitesimal
generator of $(T_t)$ with $\CL$, while $\CL_S$, $\CL_A$, and $\CL^{\ast}$
denote its symmetric/antisymmetric part and adjoint, respectively. It follows
from Lemma~\ref{lem:generator} that for $F \in C^{\infty}_c
(\mathbb{R}^{\mathbb{Z}}, \mathbb{R})$ we have
 \begin{align*}
 \CL_S F &= \frac{1}{2} \sum_{i \in \mathbb{Z}} \{ - (V' (u (i + 1)) - V' (u
 (i))) (\partial_{i + 1} - \partial_i) + (\partial_{i + 1} - \partial_i)^2
 \} F, \\
 \CL_A F &= \frac{\sqrt{\varepsilon}}{2} \sum_{i \in \mathbb{Z}} (V' (u (i +
 1)) - V' (u (i - 1))) \partial_i F.
 \end{align*}
The generator $n^2 \CL$ of $u^n : = u_{n^2 \cdummy}$ will be denoted by
$\CL^{(n)}$, and we also write $\CL^{(n)}_S = n^2 \CL_S$ and similarly for
$\CL^{(n)}_A$. Recall that $\mathcal{L}$ also depends on the non-symmetry
parameter $\varepsilon$.

\begin{lemma}
  \label{lem:core}The space $C^{\infty}_c (\mathbb{R}^{\mathbb{Z}},
  \mathbb{R})$ is a core for $\CL$ and also for $\CL^{\ast}$, that is the
  closure of the operator $\CL |_{C^{\infty}_c (\mathbb{R}^{\mathbb{Z}},
  \mathbb{R})}$ is equal to $\CL$ and similarly for $\CL^{\ast}$.
\end{lemma}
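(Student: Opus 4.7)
The plan is to combine an Itô's formula argument, to place $C^{\infty}_c(\R^\Z,\R)$ inside $D(\CL) \cap D(\CL^{\ast})$, with a Lumer--Phillips range argument executed through the periodic approximation of Theorem~\ref{thm:periodic approximation}.

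For the inclusion step, fix $F \in C^{\infty}_c(\R^\Z,\R)$ depending only on $\{u(i):|i|\le M\}$. Since \assumptionV makes $V'$ Lipschitz, \eqref{eq:sbe with alpha} is strongly well-posed and Itô's formula applies to $F(u_t)$, giving $F(u_t)-F(u_0) = M_t + \int_0^t (\CL_S F + \CL_A F)(u_s)\,\dd s$ with a local martingale $M_t$. The pointwise drift is uniformly bounded in $u_s$ thanks to the compact support of $F$ and continuity of $V''$, so taking $\mu_\lambda$-expectations gives $T_t F - F = \int_0^t T_s(\CL_S F + \CL_A F)\,\dd s$ in $L^2(\mu_\lambda)$, whence $F \in D(\CL)$ with $\CL F = \CL_S F + \CL_A F$. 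Applying the same argument to the time-reversed Markov process from Corollary~\ref{cor:reversedEvolution}, whose generator is $\CL_S - \CL_A$, places $F$ in $D(\CL^{\ast})$ with $\CL^{\ast} F = \CL_S F - \CL_A F$.

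To promote this inclusion to a core I would use the Lumer--Phillips criterion: since $\CL$ already generates the contraction semigroup $(T_t)$, $C^{\infty}_c(\R^\Z,\R)$ is a core provided it is dense in $L^2(\mu_\lambda)$ (already noted before the lemma), $\CL|_{C^{\infty}_c}$ is dissipative, and $(\kappa-\CL)(C^{\infty}_c)$ is dense in $L^2(\mu_\lambda)$ for some $\kappa > 0$. Dissipativity follows from the infinite-volume analogue of Lemma~\ref{lem:generator}: $\langle F, \CL F\rangle_\lambda = \langle F, \CL_S F\rangle_\lambda \le 0$, as $\CL_A$ is antisymmetric and $\CL_S$ is the negative of a Dirichlet form. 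For the range condition I would exploit the periodic approximation. On $\Z_N$ the generator $\CG^\alpha = \CG_S + \alpha\CG_A$ with $\alpha = \sqrt{\varepsilon}$ is a non-degenerate finite-dimensional diffusion operator with smooth drift, for which $C^{\infty}_c(\R^{\Z_N})$ is a classical core. For any target $g$ in a dense subspace of $L^2(\mu_\lambda)$ consisting of smooth cylindrical functions supported in $\{|i|\le M\}$, one obtains $F^N \in C^{\infty}_c(\R^{\Z_N})$ approximating the finite-volume resolvent $(\kappa-\CG^\alpha)^{-1} g$ in $L^2(\mu_\lambda^N)$; if $F^N$ can be chosen so that its support lives in a fixed window $\{|i|\le M'\} \subset \Z_N$ for all $N \ge 2M'+1$, then $F^N$ lifts to a genuine element of $C^{\infty}_c(\R^\Z,\R)$ with $\CL F^N = \CG^\alpha F^N$, and a passage $N\to\infty$ (via Theorem~\ref{thm:periodic approximation} and convergence of $\mu_\lambda^N$ to $\mu_\lambda$ on cylinder functions) gives density of the range. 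The same argument with $\alpha$ replaced by $-\alpha$ handles $\CL^{\ast}$.

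The main obstacle is the uniform-in-$N$ control on the support of the resolvent approximations $F^N$: the finite-volume resolvent $(\kappa-\CG^\alpha)^{-1}$ is non-local, so for a spatially localized $g$ the solution $F^N$ has support that a priori spreads with $N$. The most natural way to handle this is a further cut-off and mollification of $F^N$, combined with off-diagonal decay of the resolvent kernel, which in turn comes from a spectral gap for $\CG_S$ on $\Z_N$ of the Bakry--Émery / Caputo type made available by \assumptionV; alternatively, one may argue more directly by approximating $(\kappa-\CL)^{-1} g$ in the graph norm using conditional expectations onto $\sigma(u(j):|j|\le m)$ followed by smooth truncation, using the product structure of $\mu_\lambda$ and the locality of $\CL_S+\CL_A$ to control the resulting error.
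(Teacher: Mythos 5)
Your overall frame (put $C^\infty_c(\R^\Z,\R)$ inside $D(\CL)\cap D(\CL^\ast)$, then reduce the core property to density of the range of $\kappa-\CL|_{C^\infty_c}$) is the same as the paper's, which invokes \cite[Proposition~1.3.1]{bib:ethierKurtz}. But the heart of the lemma is precisely the range condition, and there your argument has a genuine gap which you yourself flag: passing through the periodic dynamics on $\Z_N$ forces you to control the lattice support of the finite-volume resolvent solutions $F^N$ uniformly in $N$, and the resolvent is non-local, so this does not come for free. Neither of your proposed repairs is carried out: off-diagonal decay of the resolvent kernel is not available from the ingredients assembled in the paper (Caputo's estimate in Lemma~\ref{lem:spectral gap} is a Poincar\'e inequality for the conditioned measures $\nu^\ell_\rho$, not a statement about the kernel of $(\kappa-\CG^\alpha)^{-1}$), and conditioning $(\kappa-\CL)^{-1}g$ on $\sigma(u(j):|j|\le m)$ does not commute with $\CL$, so controlling the graph-norm error after truncation is exactly the problem you started with. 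The paper avoids the issue entirely and never uses the $\Z_N$ approximation here: since the target $F$ is cylindrical, depending on $d$ coordinates, one solves the resolvent equation $(\gamma-\CL)G=F$ for the corresponding \emph{finite-dimensional} diffusion operator on $\R^d$; uniform ellipticity gives $G\in C^\infty_b(\R^d)$, and the only defect is that $G$ need not have compact support \emph{in $\R^d$}, which is cured by the dilation cutoff $\varphi_n(x)=\varphi(x/n)$: the error $(\gamma-\CL)(\varphi_n G)-\varphi_n F=-(\CL\varphi_n)G-A\varphi_n\,AG$ involves derivatives of $\varphi_n$ of size $O(n^{-1})$ and hence vanishes uniformly. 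In other words, the localization in the lattice variable is automatic because one works in the finitely many coordinates of $F$ from the start; the only truncation needed is in the value space, where slowly varying cutoffs cost nothing.

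A smaller inaccuracy in your inclusion step: the drift $\CL F(u)$ is \emph{not} uniformly bounded for $F\in C^\infty_c$ depending on $u(-M),\dots,u(M)$, because the boundary terms contain factors like $(V'(u(M+1))-V'(u(M)))\,\partial_M F(u)$, and $u(M+1)$ is a coordinate on which $F$ does not depend, so this is unbounded in $u(M+1)$. It is, however, in every $L^p(\mu_\lambda)$ (linear growth of $V'$ under \assumptionV plus Gaussian-type tails of $p_\lambda$), which is enough to justify $T_tF-F=\int_0^t T_s\CL F\,\dd s$ and hence $F\in D(\CL)$; the analogous statement for $\CL^\ast$ via Corollary~\ref{cor:reversedEvolution} is fine. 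So the first half of your proposal is essentially sound after this correction, but the range condition, as written, is not proved.
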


\begin{proof}
  We concentrate on $\CL$, the
  arguments for $\CL^\ast$ are the same. 
  According to \cite[Proposition~1.3.1]{bib:ethierKurtz} it
  suffices to show that $C^{\infty}_c (\mathbb{R}^{\mathbb{Z}},
  \mathbb{R})$ is dense in $L^2 ({\mu}_{\lambda})$ (which we already
  know) and that there is some $\gamma > 0$ such that the range $R (
  \gamma - \CL |_{C^{\infty}_c (\mathbb{R}^{\mathbb{Z}}, \mathbb{R})}
  )$ is dense in $L^2
  ({\mu}_{\lambda})$.  It suffices
  to show that $R ( \gamma - \CL |_{C^{\infty}_c
  (\mathbb{R}^{\mathbb{Z}}, \mathbb{R})} )$ is dense in $C^{\infty}_c
  (\mathbb{R}^{\mathbb{Z}}, \mathbb{R}) \subset L^2 ({\mu}_{\lambda})$.
  But if $F \in C^{\infty}_c (\mathbb{R}^{\mathbb{Z}}, \mathbb{R})$ depends
  on finitely many coordinates, say $\mathbb{R}^d$, then we can find a
  solution $G : \mathbb{R}^d \rightarrow \mathbb{R}$ to the
  finite-dimensional resolvent equation $\left( \gamma - \CL \right) G = F$
  (slightly abusing notation by also writing $\CL$ for the diffusion operator
  acting on $\mathbb{R}^d$). Moreover, since the diffusion matrix of $\CL$ is
  uniformly elliptic, we get that $G \in C^{\infty}_b (\mathbb{R}^d)$
  although not necessarily $G \in C^{\infty}_c (\mathbb{R}^d)$. But if now
  $\varphi : \mathbb{R}^d \rightarrow \mathbb{R}$ is a smooth compactly
  supported function with $\varphi(0) = 1$ and $\varphi_n (x) = \varphi (x / n)$, then
  \[ \left( \gamma - \CL \right) (\varphi_n G) = \varphi_n F - \left( \CL
     \varphi_n \right) G - A \varphi_n A G, \]
  where $A$ is a first order differential operator, and since
  \[ \| \partial_i \varphi_n \|_{\infty} = \| n^{- 1} (\partial_i \varphi)
     (n^{- 1} \cdummy) \|_{\infty} \lesssim n^{- 1}, \]
  we get that $\left( \gamma - \CL \right) (\varphi_n G)$ converges to $F$ uniformly.
\end{proof}

\section{Second order Boltzmann-Gibbs principle}\label{sec:second-order-BG}

\subsection{Kipnis-Varadhan lemma and spectral gap}\label{sec:kipnisVaradhan}

Recall that we work throughout with the stationary measure
${\mu}_{\lambda_0}$ for a fixed $\lambda_0 \in \mathbb{R}$. Let us define
the space $\CH^1$ as the completion of $C^{\infty}_c
(\mathbb{R}^{\mathbb{Z}}, \mathbb{R})$ with respect to $\lVert \cdot \rVert_1$,
where
\[ \| F \|_1^2 \assign \left\langle F, - \CL_S F \right\rangle_{\lambda_0} .
\]
Since we showed in Lemma~\ref{lem:core} that $C^{\infty}_c
(\mathbb{R}^{\mathbb{Z}}, \mathbb{R})$ is a core for $\CL$ and $\CL^{\ast}$
we are in the setting of
\cite[Section~2.2]{bib:komorowskiLandimOlla}. We
also write
\[
  \CE (F) \assign  \frac{1}{2} \sum_{i \in \mathbb{Z}} (\partial_i F -
  \partial_{i + 1} F)^2 .
\]
From a similar computation as in Lemma~\ref{lem:generator} it easily follows
that $\| F \|_1^2 = E_{\lambda_0} \left[ \CE (F) \right]$ whenever $F \in
C^{\infty}_c (\mathbb{R}^{\mathbb{Z}}, \mathbb{R})$. We then define for $F \in L^2 ({\mu}_{\lambda_0})$
\[ \| F \|_{- 1}^2 \assign \sup_{G \in C^{\infty}_c
   (\mathbb{R}^{\mathbb{Z}}, \mathbb{R})} \{ 2 \langle F, G
   \rangle_{\lambda_0} - \| G \|_1^2 \}, \]
and the space $\CH^{- 1}$ is defined as the completion of $\{ F \in L^2
({\mu}_{\lambda_0}) : \| F \|_{- 1}^2 < \infty \}$, after identifying all
functions $F$ and $G$ that satisfy $\| F - G \|_{- 1} = 0$. Note that $\| F
\|_{- 1} = \infty$, for all $F \in L^2 ({\mu}_{\lambda_0})$ with
$E_{\lambda_0} [F] \neq 0$, since $\| c \|_1 = 0$ for all constants $c$, so in
particular every $F \in L^2 ({\mu}_{\lambda_0}) \cap \CH^{- 1}$ satisfies
$E_{\lambda_0} [F] = 0$.

The following result can be shown in the same way as \cite[Lemma~2]{bib:gubinelliJara}.

\begin{lemma}[It{\^o} trick]
  For any $T > 0$ and $H \in L^2([0,T]; \CH^{1})$ we
  have
  \[
     \mathbb{E}_{{\mu}_{\lambda_0}} \Big[ \sup_{t \in [0, T]} \Big(\int_0^t \CL^{(n)}_S H (s, u^n_s) \mathd s \Big)^2 \Big] \lesssim n^2 \int_0^T \| H (s, \cdummy) \|_1^2 \mathd s.
  \]
\end{lemma}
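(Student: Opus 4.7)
The plan is to adapt the standard Itô trick of Chang--Landim--Olla / Gubinelli--Jara to our setting, exploiting the time reversal identity obtained in Corollary~\ref{cor:reversedEvolution}. Concretely, I would first reduce to the case of a smooth cylinder test function $H(s,\cdot) \in C^\infty_c(\R^\Z,\R)$ for each $s$, and in the end extend to $H \in L^2([0,T];\CH^1)$ by density (using that $C^\infty_c(\R^\Z,\R)$ is a core for $\CL$ and for $\CL^\ast$, Lemma~\ref{lem:core}) together with the fact that the right-hand side of the claimed inequality is continuous in the $L^2([0,T];\CH^1)$ norm.

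For smooth $H$, apply Itô's formula to $s\mapsto H(s,u^n_s)$ for the forward process, which gives a forward martingale
\[
   M_t \assign H(t,u^n_t) - H(0,u^n_0) - \int_0^t \bigl(\partial_s + \CL^{(n)}\bigr)H(s,u^n_s)\,\dd s,
\]
whose quadratic variation is of the form $\dd\langle M\rangle_s = 2n^2 \CE(H(s,\cdot))(u^n_s)\,\dd s$, because the time rescaling $u^n_t=u_{n^2t}$ multiplies the Brownian diffusion coefficient by $n$ and the carré du champ of $\CL_S$ is exactly $\CE$. By stationarity, $\E_{\mu_{\lambda_0}}\langle M\rangle_T = 2n^2\int_0^T \|H(s,\cdot)\|_1^2\,\dd s$. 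Next, apply Itô's formula to the time-reversed process $\hat u^n_t = u^n_{T-t}$ (which solves an SDE of the same type by Corollary~\ref{cor:reversedEvolution}, with generator $\CL^{(n),\ast} = \CL^{(n)}_S - \CL^{(n)}_A$) to construct a second martingale. Reversing time and subtracting the resulting identity from the one for $M$, the antisymmetric parts cancel and one obtains
\[
   2\int_0^t \CL^{(n)}_S H(s,u^n_s)\,\dd s = -M_t - \widetilde M_t,
\]
where $\widetilde M_t \assign \bar M_T - \bar M_{T-t}$ is a martingale (in the reversed filtration) with the same bound $\E\langle \widetilde M\rangle_T = 2n^2\int_0^T\|H(s,\cdot)\|_1^2\,\dd s$; here I use that $\|\cdot\|_1$ is the same whether computed from $\CL^{(n)}_S$ or from the symmetric part of $\CL^{(n),\ast}$.

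To finish, apply Doob's $L^2$ maximal inequality to both $M$ and $\widetilde M$ to get
\[
   \E_{\mu_{\lambda_0}}\Bigl[\sup_{t\in[0,T]} M_t^2\Bigr] \le 4\,\E\langle M\rangle_T \lesssim n^2\int_0^T \|H(s,\cdot)\|_1^2\,\dd s,
\]
and the same for $\widetilde M$. Combining via the elementary inequality $(a+b)^2\le 2a^2+2b^2$ yields the claim.

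The main obstacle is the approximation step: $H \in L^2([0,T];\CH^1)$ need not be in the domain of $\CL^{(n)}_S$ pointwise, so the left-hand side $\int_0^t \CL^{(n)}_S H(s,u^n_s)\,\dd s$ must first be defined as an $L^2$-limit. This is handled by approximating $H$ by step functions in time with values in $C^\infty_c(\R^\Z,\R)$: the inequality holds for the approximants by the above argument, and the right-hand side is explicitly a norm in $L^2([0,T];\CH^1)$, so we can pass to the limit and use it to \emph{define} the limit integral together with the bound. The remaining work is the bookkeeping in deriving the time-reversal identity for a time-dependent $H$ and checking that the carré du champ computation really reproduces $\CE$; both are direct given Lemma~\ref{lem:generator} and Corollary~\ref{cor:reversedEvolution}.
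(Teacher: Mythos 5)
Your argument is correct and is essentially the proof the paper has in mind: the lemma is quoted from \cite{bib:gubinelliJara} (Lemma~2 there), whose proof is exactly this forward--backward It\^o decomposition, with the time reversal supplied here by Corollary~\ref{cor:reversedEvolution}, the carr\'e du champ identification $\mathd\langle M\rangle_s = 2n^2\,\CE(H(s,\cdot))(u^n_s)\,\mathd s$, stationarity, and Doob's maximal inequality. The only loose point is the claim that $\widetilde M_t = \bar M_T - \bar M_{T-t}$ is itself a martingale; it need not be in any convenient filtration, but this is harmless since $\sup_{t\le T}\widetilde M_t^2 \le 4\sup_{s\le T}\bar M_s^2$ and Doob applied to the backward martingale $\bar M$ gives the same bound.
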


Using the It{\^o} trick, it is not hard to obtain the Kipnis-Varadhan lemma in
our context. The proof is the same as for \cite[Lemma~2]{bib:energyUniqueness}.

\begin{lemma}[Kipnis-Varadhan lemma]
  \label{lem:Kipnis-Varadhan}
  For any $T > 0$ and $F \in L^2( [0, T]; \CH^{- 1} \cap L^2)$ we have
  \[ \mathbb{E}_{{\mu}_{\lambda_0}} \Big[ \sup_{t \in [0, T]} \Big(
     \int_0^t F (s, u^n_s) \mathd s \Big)^2 \Big] \lesssim n^{- 2}
     \int_0^T \| F (s, \cdummy) \|_{- 1}^2 \mathd s. \]
\end{lemma}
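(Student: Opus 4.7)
The plan is to combine the Itô trick (the previous lemma) with a resolvent approximation, following the standard Kipnis--Varadhan strategy (as carried out in the reference \cite{bib:energyUniqueness} and in \cite{bib:komorowskiLandimOlla}). Since the paper has already identified $C^\infty_c(\R^\Z,\R)$ as a core for $\CL$ and $\CL^\ast$ (Lemma~\ref{lem:core}), $\CL_S = \tfrac12(\CL+\CL^\ast)$ is a non-positive self-adjoint operator on $L^2(\mu_{\lambda_0})$, and I would use the functional calculus for $-\CL_S$ throughout.

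More concretely, for each $\delta>0$ and each $s\in[0,T]$, I would set
\[
   H^{\delta}(s,\cdummy) \assign (\delta - \CL_S)^{-1} F(s,\cdummy),
\]
well-defined since $F(s,\cdummy)\in L^2(\mu_{\lambda_0})$. Standard spectral estimates for the non-positive self-adjoint operator $\CL_S$ give, pointwise in $s$,
\[
   \|H^{\delta}(s,\cdummy)\|_1^2 \le \|F(s,\cdummy)\|_{-1}^2, \qquad \delta\,\|H^{\delta}(s,\cdummy)\|_0^2 \le \|F(s,\cdummy)\|_{-1}^2,
\]
where $\|\cdummy\|_0$ denotes the $L^2(\mu_{\lambda_0})$-norm. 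These follow by writing the spectral measure $\nu_{F(s,\cdummy)}$ of $F(s,\cdummy)$ with respect to $-\CL_S$ and using $\delta/(\delta+\mu)^2 \le 1/\mu$ and $\mu/(\delta+\mu)^2\le 1/\mu$. Integrating in $s$, $H^{\delta}\in L^2([0,T];\CH^1)$ with norm controlled by that of $F$ in $L^2([0,T];\CH^{-1})$.

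From the resolvent identity $(\delta-\CL_S)H^{\delta} = F$ and the rescaling $\CL_S^{(n)}=n^2\CL_S$ we get the decomposition
\[
   \int_0^t F(s,u^n_s)\,\mathd s = -\,n^{-2} \int_0^t \CL_S^{(n)} H^{\delta}(s,u^n_s)\,\mathd s \; + \; \delta \int_0^t H^{\delta}(s,u^n_s)\,\mathd s.
\]
The Itô trick applied to the first term yields
\[
   \E_{\mu_{\lambda_0}}\!\Big[\sup_{t\in[0,T]}\Big(n^{-2}\!\int_0^t \CL_S^{(n)} H^{\delta}(s,u^n_s)\,\mathd s\Big)^2\Big] \lesssim n^{-2}\int_0^T \|H^{\delta}(s,\cdummy)\|_1^2\,\mathd s \le n^{-2}\int_0^T \|F(s,\cdummy)\|_{-1}^2\,\mathd s.
\]
For the error term, Cauchy--Schwarz and stationarity give
\[
   \E_{\mu_{\lambda_0}}\!\Big[\sup_{t\in[0,T]}\Big(\delta\int_0^t H^{\delta}(s,u^n_s)\,\mathd s\Big)^2\Big] \le \delta^2 T \int_0^T \|H^{\delta}(s,\cdummy)\|_0^2\,\mathd s \le \delta T\int_0^T \|F(s,\cdummy)\|_{-1}^2\,\mathd s,
\]
which tends to zero as $\delta\downarrow 0$. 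Combining the two bounds using $(a+b)^2\le 2a^2+2b^2$ and letting $\delta\downarrow 0$ yields the claim.

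The main obstacle, and the reason the proof works cleanly, is the self-adjointness of $\CL_S$ on $L^2(\mu_{\lambda_0})$: without it, the spectral estimates above are not available. Fortunately the machinery set up in Section~\ref{sec:kipnisVaradhan} together with Lemma~\ref{lem:core} place us exactly in the framework of \cite[Section~2.2]{bib:komorowskiLandimOlla}, so the resolvent bounds are immediate. A minor technical point is measurability of $s\mapsto H^{\delta}(s,\cdummy)$, which follows from boundedness of the resolvent and the measurability of $s\mapsto F(s,\cdummy)$ into $L^2(\mu_{\lambda_0})$; otherwise the proof reduces to the two elementary ingredients above.
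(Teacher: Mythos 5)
Your argument is correct, and it is essentially the proof the paper intends: the paper defers to \cite[Lemma~2]{bib:energyUniqueness}, whose proof is exactly this resolvent approximation $H^\delta=(\delta-\CL_S)^{-1}F$, the spectral bounds $\|H^\delta\|_1^2\le\|F\|_{-1}^2$ and $\delta\|H^\delta\|_0^2\le\|F\|_{-1}^2$, the It\^o trick for the $\CL_S^{(n)}H^\delta$ part, and Cauchy--Schwarz plus stationarity for the vanishing error term $\delta H^\delta$. No gaps; your handling of the scaling $\CL_S^{(n)}=n^2\CL_S$ and the limit $\delta\downarrow 0$ matches the intended argument.
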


Note that if $F (s, \cdummy) = \CL_S^{(n)} H (s, \cdummy)$ for all $s \in [0,
T]$, then we do not lose anything by applying the Kipnis-Varadhan lemma rather
than the It{\^o} trick: Since $\CL^{(n)}_S = n^2 \CL_S$, we get from
Lemma~\ref{lem:Kipnis-Varadhan}
\[ \mathbb{E}_{{\mu}_{\lambda_0}} \Big[ \sup_{t \in [0, T]} \Big(
   \int_0^t \CL^{(n)}_S H (s, u^n_s) \mathd s \Big)^2 \Big] \lesssim n^2
   \int_0^T \| \CL_S H (s, \cdummy) \|_{- 1}^2 \mathd s. \]
But $\CL_S$ is an isometry between $\CH^1$ and $\CH^{- 1}$ (see \cite[Claim~D on
p.44]{bib:komorowskiLandimOlla}), so that
\[ \| \CL_S H (s, \cdummy) \|_{- 1}^2 = \| H (s, \cdummy) \|_1^2, \]
which leads to the same bound that we get from the It{\^o} trick, at least up
to multiplication with a constant. However, we should point out that the It\^o trick allows in principle to control higher order moments, while the Kipnis-Varadhan lemma is limited to $L^2$ estimates.

To get a useful bound for the $\lVert \cdummy \rVert_{- 1}$ norm, we will use a spectral gap estimate. But first let us
introduce more notation: For $\ell \in \mathbb{N}$ and $\nu \in \mathbb{R}$
we define the measure $\nu_{\rho}^{\ell} ={\mu}^{\ell}_{\rho} (\cdummy |
\sum_{i = 0}^{\ell - 1} u(i) = \ell \rho)$. Note that $\nu^{\ell}_{\rho}$ does
not depend on $\lambda$. Indeed, under $\nu^{\ell}_{\rho}$ the density of
$(u(0), \ldots, u(\ell - 2))$ is given by
\[ \frac{p_{\lambda} (u(0)) \ldots p_{\lambda} (u(\ell - 2)) p_{\lambda}
   ( \ell \rho - \sum_{i = 0}^{\ell - 2} u(i) )}{p_{\lambda}^{\ast
   \ell} (\ell \rho)}, \]
and
\[
   p_{\lambda}^{\ast \ell} (\ell \rho) = \int_{\mathbb{R}^{\ell - 1}} \mathd u(1) \ldots \mathd u(\ell - 1) p_{\lambda} (\ell \rho - u(0) - \ldots - u(\ell - 2)) p_{\lambda} (u(0)) \ldots p_{\lambda} (u(\ell - 2)) = Z^{- \ell}_{\lambda} e^{\lambda \ell \rho} p_0^{\ast \ell} (\ell \nu),
\]
and therefore
\begin{equation}\label{eq:conditional density}
   \frac{p_{\lambda} (u(0)) \ldots p_{\lambda}
  (u(\ell - 2)) p_{\lambda} ( \ell \rho - \sum_{i = 0}^{\ell - 2} u(i)
  )}{p_{\lambda}^{\ast \ell} (\ell \rho)} = \frac{p_0 (u(0)) \ldots p_0
  (u(\ell - 2)) p_0 ( \ell \rho - \sum_{i = 0}^{\ell - 2} u(i)
  )}{p_0^{\ast \ell} (\ell \rho)}
\end{equation}
for all $\lambda \in \mathbb{R}$. 

The following spectral gap estimate is due to Caputo. Recall that \assumptionV is in force.
\begin{lemma}[{Caputo~\cite[Corollary 5.2]{bib:caputo}}]
  \label{lem:spectral gap}
  There exists a constant $C > 0$ for which
  \[ \tmop{var}_{\nu^{\ell}_{\rho}} (F) \leqslant C \ell^2
     E_{\nu^{\ell}_{\rho}} \Big[ \frac{1}{2} \sum_{i = 0}^{\ell - 2}
     (\partial_i F - \partial_{i + 1} F)^2 \Big] \]
  for all smooth and bounded $F : \mathbb{R}^{\mathbb{Z}} \rightarrow
  \mathbb{R}$ that only depend on $(u(0), \ldots, u(\ell - 1))$, all $\ell
  \in \mathbb{N}$, and all $\rho \in \mathbb{R}$.
\end{lemma}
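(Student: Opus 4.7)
The plan is to establish the stated bound as a Poincaré inequality for $\nu^\ell_\rho$ with respect to the nearest-neighbor Dirichlet form, by combining Brascamp-Lieb on the constraint hyperplane with a spectral gap estimate for the discrete Laplacian in the uniformly convex case, and then extending to the non-convex perturbation $\psi$ via a Lu-Yau style martingale decomposition.

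In the convex case $V = \varphi$ with $\varphi'' \in [1/C, C]$, the measure $\nu^\ell_\rho$ has log-concave density on the hyperplane $H_\ell = \{u \in \R^\ell : \sum_{i=0}^{\ell-1} u(i) = \ell\rho\}$, with Hessian bounded below by $(1/C)\cdot I_{|H_\ell}$. The Brascamp-Lieb inequality then gives
\[
   \mathrm{var}_{\nu^\ell_\rho}(F) \leq C\, E_{\nu^\ell_\rho}\bigl[\lVert \nabla_{H_\ell} F \rVert^2\bigr],
\]
where $\nabla_{H_\ell}$ is the gradient along $H_\ell$, i.e.\ $\nabla_{H_\ell} F = \nabla F - (\ell^{-1}\sum_j \partial_j F)\mathbf{1}$. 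To move from this Euclidean gradient to the nearest-neighbor Dirichlet form, I would invoke the spectral theorem for the discrete Laplacian on a chain of length $\ell$ with free boundary conditions: applied to the vector $(\partial_i F)_{i=0}^{\ell-1}$, and noting that its smallest nonzero eigenvalue is of order $\ell^{-2}$, one gets
\[
   \lVert \nabla_{H_\ell} F \rVert^2 \leq C \ell^2 \cdot \frac{1}{2}\sum_{i=0}^{\ell-2}(\partial_i F - \partial_{i+1} F)^2,
\]
which yields the claim for $V = \varphi$. The $\ell^2$ factor in the final bound thus has a transparent origin: it is simply the inverse spectral gap of the one-dimensional discrete Laplacian with free boundary conditions.

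The main obstacle is extending this to $V = \varphi + \psi$ with only $\psi \in C^2_b$, since then $\nu^\ell_\rho$ is no longer uniformly log-concave and Brascamp-Lieb fails. The naive Holley-Stroock perturbation principle would cost a factor $e^{2\ell \lVert\psi\rVert_\infty}$, because the cumulative perturbation $\sum_i \psi(u(i))$ has oscillation $O(\ell)$, and this is incompatible with the target $\ell^2$ scaling. The correct approach, due to Caputo, is a Lu-Yau martingale decomposition: split the chain into blocks of $O(1)$ coordinates, decompose the variance as
\[
   \mathrm{var}_{\nu^\ell_\rho}(F) = \mathrm{var}_{\nu^\ell_\rho}\bigl(E[F \mid S]\bigr) + E_{\nu^\ell_\rho}\bigl[\mathrm{var}(F \mid S)\bigr]
\]
for an appropriate block sum $S$, iterate, and treat the base case (block size $2$) by a direct Brascamp-Lieb plus Holley-Stroock calculation in which the perturbation $\psi$ contributes only $e^{O(\lVert\psi\rVert_\infty)}$ independently of $\ell$. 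The $\ell^2$ factor then accumulates from telescoping across the levels of conditioning. The delicate point, and the real content of Caputo's paper, is that the conditional-expectation variance $\mathrm{var}_{\nu^\ell_\rho}(E[F \mid S])$ must be controlled uniformly in $\rho$ and in the conditioning value, which reduces to a quantitative equivalence-of-ensembles estimate for the convex potential. Since this is exactly the conclusion of \cite{bib:caputo}, I would cite his result as a black box rather than reproducing the technical core of the argument here.
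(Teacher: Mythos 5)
Your proposal is correct and ends up exactly where the paper does: Lemma~\ref{lem:spectral gap} is simply imported from Caputo~\cite[Corollary~5.2]{bib:caputo} with no proof given, so citing his result as a black box for the perturbed potential $V=\varphi+\psi$ is precisely the paper's approach. Your added sketch of the uniformly convex case (Brascamp--Lieb on the hyperplane plus the $O(\ell^2)$ inverse spectral gap of the discrete Neumann Laplacian applied pointwise to $(\partial_i F)_i$) is sound and correctly identifies why naive Holley--Stroock cannot handle $\psi$, but it is extra material rather than a divergence from the paper.
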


\begin{lemma}
  Let $F \in \bigcap_{\lambda \in \mathbb{R}} L^1 ({\mu}_{\lambda})$
  depend only on $u(0), \ldots, u(\ell - 1)$ with
  $\mathbb{E}_{{\mu}_{\lambda}} [F] = 0$ for all $\lambda \in
  \mathbb{R}$. Then $E_{\lambda} [F \varphi (\sum_{i = 0}^{\ell - 1} u(i))] =
  0$ for all $\varphi$ for which the right hand side is well defined and for
  all $\lambda$.
\end{lemma}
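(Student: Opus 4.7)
The plan is to use the key structural fact, already derived in~(\ref{eq:conditional density}), that the conditional distribution of $(u(0),\ldots,u(\ell-1))$ given the sum $S := \sum_{i=0}^{\ell-1} u(i)$ under $\mu_\lambda$ does not depend on $\lambda$. Thus the conditional expectation $g(s) := E_\lambda[F \mid S = s]$ is a function independent of $\lambda$, and equals $E_{\nu_\rho^\ell}[F]$ at $s = \ell\rho$.

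The next step is to exploit the hypothesis $E_\lambda[F] = 0$ for every $\lambda \in \mathbb{R}$. By the tower property,
\[
  0 = E_\lambda[F] = E_\lambda\bigl[ g(S) \bigr] = \int_\R g(s)\, Z_\lambda^{-\ell}\, e^{\lambda s}\, p_0^{\ast \ell}(s)\, \mathd s,
\]
using that $S$ has density $Z_\lambda^{-\ell} e^{\lambda s} p_0^{\ast \ell}(s)$ under $\mu_\lambda$ (as computed in the paragraph before~(\ref{eq:conditional density})). Multiplying by $Z_\lambda^\ell > 0$, we obtain
\[
  \int_\R g(s)\, e^{\lambda s}\, p_0^{\ast \ell}(s)\, \mathd s = 0 \qquad \text{for every } \lambda \in \R.
\]
The integrability hypothesis $F \in \bigcap_\lambda L^1(\mu_\lambda)$ ensures the integral is absolutely convergent for every $\lambda$, so the bilateral Laplace transform of the signed measure $g(s) p_0^{\ast \ell}(s)\mathd s$ is finite and identically zero on the whole real line. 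By the injectivity of the bilateral Laplace transform on its domain of absolute convergence, this forces $g(s)\, p_0^{\ast \ell}(s) = 0$ for Lebesgue-a.e.\ $s$, and hence $g \equiv 0$ on the support of $p_0^{\ast \ell}$, which is a full-measure set under the law of $S$ for every $\lambda$.

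Finally, for any admissible $\varphi$,
\[
  E_\lambda\bigl[F\, \varphi(S)\bigr] = E_\lambda\bigl[\varphi(S)\, E_\lambda[F \mid S]\bigr] = E_\lambda\bigl[\varphi(S)\, g(S)\bigr] = 0,
\]
which is the claim. The only delicate point is the Laplace transform injectivity in the third step, where one must justify that finiteness of $E_\lambda[|F|]$ for all $\lambda$ translates into absolute convergence of the transform of $g \cdot p_0^{\ast \ell}$ on all of $\R$; this follows immediately from Jensen's inequality applied to the conditional expectation defining $g$, since $|g(s)|\, p_0^{\ast \ell}(s) \le E_\lambda[|F|\mid S=s]\, p_0^{\ast \ell}(s)$ and integrating against $Z_\lambda^{-\ell} e^{\lambda s}$ reproduces $E_\lambda[|F|] < \infty$.
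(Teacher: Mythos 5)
Your proof is correct and follows essentially the same route as the paper: the hypothesis $E_{\lambda'}[F]=0$ for all $\lambda'$ is converted, via the exponential tilting structure of $\mu_\lambda$, into the vanishing of the two-sided Laplace transform of the signed measure $E_\lambda[F\,\mathbf{1}_A]$ on $\sigma(\sum_i u(i))$ (which in your formulation has density $g(s)\,p_0^{\ast\ell}(s)$ up to constants), and one concludes by injectivity of that transform. Your passage through the $\lambda$-independent conditional expectation $g$ and the explicit absolute-convergence check via conditional Jensen are just slightly more detailed bookkeeping of the paper's one-line change of measure $E_{\lambda+\lambda'}[F]\propto E_\lambda[F e^{\lambda'\sum_i u(i)}]$.
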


\begin{proof}
  Fix $\lambda$ and let $\lambda' \in \mathbb{R}$. We have
  \[ 0 = E_{\lambda + \lambda'} [F] = E_{\lambda} [F e^{\lambda' \left(
     \sum_{i = 0}^{\ell - 1} u(i) \right)}] \]
  for all $\lambda'$. Thus, the Laplace transform of the signed measure
  \[
     \sigma\Big(\sum_{i=0}^{\ell -1} u(i) \Big) \ni A \mapsto E_{\lambda} [F \1_A] \in \R
  \]
  is identically 0, which proves the claim.
\end{proof}

As a simple consequence of Lemma \ref{lem:spectral gap} we obtain a bound for
the $\CH^{- 1}$ norm. The proof is similar to the one in
\cite{bib:goncalvesJara}, but since we are in a different setting let us give
some details.

\begin{lemma}
  (see also \cite[Proposition~6]{bib:goncalvesJara})
  Let $C$ be the constant in
  Lemma~\ref{lem:spectral gap}. Then
  \[ \| F \|_{- 1}^2 \leqslant 2 C \ell^2 \langle F, F \rangle_{\lambda_0} \]
  for all $\ell \in \mathbb{N}$ and all $F \in \bigcap_{\lambda \in
  \mathbb{R}} L^1 ({\mu}_{\lambda}) \cap L^2 ({\mu}_{\lambda_0})$
  that depend only on $u(0), \ldots, u(\ell - 1)$ and that satisfy
  $E_{\lambda} [F] = 0$ for all $\lambda \in \mathbb{R}$.
\end{lemma}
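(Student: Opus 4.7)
The plan is to exploit the variational characterisation
\[
\|F\|_{-1}^2 = \sup_{G \in C^{\infty}_c(\mathbb{R}^{\mathbb{Z}}, \mathbb{R})}\{ 2\langle F, G\rangle_{\lambda_0} - \|G\|_1^2\},
\]
and reduce the bound on $\langle F, G\rangle_{\lambda_0}$ to an application of Caputo's spectral gap (Lemma~\ref{lem:spectral gap}). Since the latter lives on the canonical ensembles $\nu^{\ell}_{\rho}$ obtained by conditioning on the block sum $S_\ell := \sum_{i=0}^{\ell-1} u(i)$, I first disintegrate the inner product along this sum. By independence under $\mu_{\lambda_0}$ of the block $(u(0),\ldots,u(\ell-1))$ from the rest of the lattice, I may replace $G$ by $H := E_{\lambda_0}[G \mid u(0),\ldots,u(\ell-1)]$; this leaves $\langle F, G\rangle_{\lambda_0}$ unchanged and does not increase $\|G\|_1^2$, since conditional expectation onto a product factor contracts the Dirichlet form via Jensen's inequality. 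Writing $P^{\lambda_0}_{S_\ell/\ell}$ for the law of $S_\ell/\ell$ under $\mu_{\lambda_0}$, this gives
\[
\langle F, G\rangle_{\lambda_0} = \langle F, H\rangle_{\lambda_0} = \int E_{\nu^{\ell}_{\rho}}[F H]\, dP^{\lambda_0}_{S_\ell/\ell}(\rho).
\]

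Next I use the hypothesis $E_\lambda[F] = 0$ for every $\lambda$, which together with the preceding lemma upgrades to $E_{\nu^{\ell}_{\rho}}[F] = 0$ for $P^{\lambda_0}_{S_\ell/\ell}$-a.e.\ $\rho$. Hence inside the integrand $H$ can be recentred by $E_{\nu^{\ell}_{\rho}}[H]$ at no cost, and Cauchy--Schwarz together with Lemma~\ref{lem:spectral gap} yield
\[
E_{\nu^{\ell}_{\rho}}[FH] \le \sqrt{E_{\nu^{\ell}_{\rho}}[F^2]}\sqrt{\mathrm{var}_{\nu^{\ell}_{\rho}}(H)} \le \sqrt{E_{\nu^{\ell}_{\rho}}[F^2]}\sqrt{C\ell^2\, E_{\nu^{\ell}_{\rho}}[D_\ell(H)]},
\]
with $D_\ell(H) = \tfrac{1}{2}\sum_{i=0}^{\ell-2}(\partial_i H - \partial_{i+1}H)^2$. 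Integrating over $\rho$ and applying Cauchy--Schwarz once more to split the product of square roots, I obtain
\[
\langle F, G\rangle_{\lambda_0} \le \sqrt{\langle F, F\rangle_{\lambda_0}}\,\sqrt{C\ell^2\, E_{\lambda_0}[D_\ell(H)]} \le \sqrt{C\ell^2 \langle F, F\rangle_{\lambda_0}}\,\|G\|_1,
\]
since $D_\ell(H)$ is a partial sum of the non-negative terms defining $\|H\|_1^2 \le \|G\|_1^2$. A final Young inequality $2ab \le a^2 + b^2$ (or a weighted version absorbing the factor $2$ in the statement) delivers $2\langle F, G\rangle_{\lambda_0} - \|G\|_1^2 \le 2C\ell^2 \langle F, F\rangle_{\lambda_0}$, and taking the supremum over $G$ completes the proof.

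The main obstacle is the disintegration step, specifically ensuring that the zero-mean property of $F$ really transfers to every canonical ensemble $\nu^{\ell}_{\rho}$; this is exactly what the preceding lemma is designed for, and the integrability hypothesis $F \in \bigcap_\lambda L^1(\mu_\lambda)$ guarantees that all relevant conditional expectations make sense. The reduction from $G$ to $H$ via conditional expectation is a standard manoeuvre but worth stating explicitly, since it justifies working with the block-dependent canonical Dirichlet form $D_\ell$ in place of the full global form $\|\cdot\|_1^2$. Once these two points are in place, the remainder is just the classical two-step Cauchy--Schwarz combined with Caputo's spectral gap in a block of size $\ell$.
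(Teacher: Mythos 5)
Your proof is correct and follows essentially the same route as the paper: project $G$ onto the block coordinates, use the preceding lemma to get $E_{\nu^{\ell}_{\rho}}[F]=0$ (the paper equivalently subtracts $E_{\lambda_0}[G_{\ell}\mid \bar u_{\ell}]$ from the test function), apply Caputo's spectral gap conditionally on the block mean, and close with Cauchy--Schwarz; your disintegrated phrasing even yields the slightly better constant $C\ell^2$. One cosmetic caveat: you neither need nor should claim the full contraction $\|H\|_1\le\|G\|_1$, whose boundary terms (differences coupling a block coordinate with an outside coordinate) are not covered by Jensen --- what your argument actually uses is only $E_{\lambda_0}[D_\ell(H)]\le \tfrac12\sum_{i=0}^{\ell-2}E_{\lambda_0}[(\partial_i G-\partial_{i+1}G)^2]\le\|G\|_1^2$, exactly as in the paper.
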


\begin{proof}
  Consider a test function $G \in C^{\infty} (\mathbb{R}^{\mathbb{Z}},
  \mathbb{R})$ for the variational definition of the $\CH^{- 1}$ norm and
  define $G_{\ell} = E_{\lambda_0} [G|u(0), \ldots, u(\ell - 1)]$ and
  $\bar{G}_{\ell} = G_{\ell} - E_{\lambda_0} [G_{\ell} | \bar{u}_{\ell}]$ for
  $\bar{u}_{\ell} = \ell^{- 1} \sum_{i = 0}^{\ell - 1} u(i)$. Then we have
  \[ 2 \langle F, G \rangle_{\lambda_0} - \| G \|_1^2 = 2 \langle F, G_{\ell}
     \rangle_{\lambda_0} - \| G \|_1^2 = 2 \langle F, \bar{G}_{\ell}
     \rangle_{\lambda_0} - \| G \|_1^2, \]
  using the previous lemma in the second step. Now observe that
  \[ \| G \|_1^2 = \frac{1}{2} \sum_{i \in \mathbb{Z}} E_{\lambda_0}
     [(\partial_i G - \partial_{i + 1} G)^2] \geqslant \frac{1}{2} \sum_{i =
     0}^{\ell - 2} E_{\lambda_0} [(\partial_i G - \partial_{i + 1} G)^2], \]
  and that for $i \in [0, \ell - 1]$ the derivative $\partial_i (E_{\lambda_0}
  [G_{\ell} | \bar{u}_{\ell}])$ is independent of $i$ by exchangeability while
  $\partial_i G_{\ell} = E_{\lambda_0} [\partial_i G|u(0), \ldots, u(\ell -
  1)]$. Therefore, Jensen's inequality gives
  \[ \| G \|_1^2 \geqslant \frac{1}{2} \sum_{i = 0}^{\ell - 2} E_{\lambda_0} [(\partial_i
     \bar{G}_{\ell} - \partial_{i + 1} \bar{G}_{\ell})^2] =  \frac{1}{2}\sum_{i = 0}^{\ell
     - 2} E_{\lambda_0} [E_{\nu^{\ell}_{\bar{u}^\ell}} [(\partial_i \bar{G}_{\ell} -
     \partial_{i + 1} \bar{G}_{\ell})^2]] \geqslant  \frac{1}{2} C^{- 1} \ell^{- 2}
     E_{\lambda_0} [\tmop{var}_{\nu^{\ell}_{\bar{u}^\ell}} (\bar{G}_{\ell})], \]
  where the last step follows from
  Lemma~\ref{lem:spectral gap}.
  The definition of $\bar{G}_{\ell}$ implies that $E_{\nu^{\ell}_{\rho}}
  [\bar{G}_{\ell}] = 0$ for all $\rho$, and therefore we end up with
  \[ 2 \langle F, G \rangle_{\lambda_0} - \| G \|_1^2 \leqslant 2 (\langle F,
     F \rangle_{\lambda_0})^{1 / 2} (\langle \bar{G}_{\ell}, \bar{G}_{\ell}
     \rangle_{\lambda_0})^{1 / 2} -  \frac{1}{2} C^{- 1} \ell^{- 2} \langle \bar{G}_{\ell},
     \bar{G}_{\ell} \rangle_{\lambda_0}, \]
  from where we readily deduce the claimed bound for $\| F \|_{- 1}$.
\end{proof}

Now we obtain the following Corollary using exactly the same arguments as in~\cite{bib:goncalvesJara}.

\begin{corollary}
  \label{cor:GJ corollary 1}(see also \cite[Corollary~1]{bib:goncalvesJara})
  Let $m \in \N$, $0 \leqslant k_0 < \ldots < k_m $ and $F_1, \ldots, F_m \in L^2( [0, T]
  ; L^2(\mu_{\lambda_0}))$ be such $F_i
  (t, \cdummy) \in \bigcap_{\lambda \in \mathbb{R}} L^1 ({\mu}_{\lambda})$ depends only on $u(j)$ for $k_i \leqslant j
  \leqslant k_{i + 1} - 1$ and such that $E_{\lambda} [F_i (t, \cdummy)] = 0$
  for all $\lambda \in \mathbb{R}$, $i \in \{ 1, \ldots, m \}$, $t \in [0,
  T]$. Set $\ell_i = k_i - k_{i - 1}$. Then
  \[ E_{\lambda_0} \Big[ \sup_{t \in [0, T]} \Big( \int_0^t (F_1 + \cdots +
     F_m) (s, u^n_s) \mathd s \Big)^2 \Big] \lesssim \sum_{i = 1}^m
     \frac{\ell_i^2}{n^2} \int_0^T \langle F_i (s, \cdummy), F_i (s, \cdummy)
     \rangle_{\lambda_0} \mathd s. \]
\end{corollary}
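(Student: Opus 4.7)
My plan is to combine the Kipnis--Varadhan lemma with a refined variational estimate of the $\CH^{-1}$ norm that exploits the disjoint-block structure of the summands $F_i$.

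First I would apply the Kipnis--Varadhan lemma (Lemma~\ref{lem:Kipnis-Varadhan}) to the integrand $F = \sum_i F_i$, reducing matters to proving
\[
   \Big\| \sum_{i=1}^m F_i(s,\cdummy) \Big\|_{-1}^2 \lesssim \sum_{i=1}^m \ell_i^2 \langle F_i(s,\cdummy), F_i(s,\cdummy) \rangle_{\lambda_0}
\]
for a.e.\ $s$. Unrolling the variational definition, for a test function $G \in C_c^\infty(\R^\Z,\R)$ I need to bound $2\langle \sum_i F_i, G\rangle_{\lambda_0} - \|G\|_1^2$. For each block $i$ introduce
\[
   G^{(i)} := E_{\lambda_0}[G \mid u(k_{i-1}), \ldots, u(k_i-1)], \qquad \bar G^{(i)} := G^{(i)} - E_{\lambda_0}\big[G^{(i)} \,\big|\, \bar u^{(i)}\big],
\]
where $\bar u^{(i)} = \ell_i^{-1}\sum_{j=k_{i-1}}^{k_i-1} u(j)$. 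Since the blocks are disjoint and $\mu_{\lambda_0}$ is a product measure, $\langle F_i, G\rangle_{\lambda_0} = \langle F_i, G^{(i)}\rangle_{\lambda_0}$, and the previous lemma (which says $F_i$ has vanishing pairing against any function of the block sum) yields $\langle F_i, G\rangle_{\lambda_0} = \langle F_i, \bar G^{(i)}\rangle_{\lambda_0}$.

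Next I would split the Dirichlet form across blocks: since the intra-block bonds $\{(j, j+1) : k_{i-1} \le j \le k_i - 2\}$ are disjoint as $i$ varies,
\[
   \|G\|_1^2 \geq \tfrac{1}{2} \sum_{i=1}^m \sum_{j=k_{i-1}}^{k_i-2} E_{\lambda_0}\big[(\partial_j G - \partial_{j+1} G)^2\big].
\]
For each block, Jensen's inequality passes the conditional expectation through and turns $\partial_j G$ into $\partial_j G^{(i)}$; moreover $\partial_j(E_{\lambda_0}[G^{(i)} \mid \bar u^{(i)}])$ is independent of $j$ by exchangeability of the coordinates in the block under the product measure, so the discrete gradients of $\bar G^{(i)}$ coincide with those of $G^{(i)}$. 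Conditioning on $\bar u^{(i)}$ and invoking Caputo's spectral gap (Lemma~\ref{lem:spectral gap}) on each block then yields
\[
   \tfrac{1}{2} \sum_{j=k_{i-1}}^{k_i-2} E_{\lambda_0}\big[(\partial_j \bar G^{(i)} - \partial_{j+1} \bar G^{(i)})^2\big] \geq \tfrac{1}{2} C^{-1} \ell_i^{-2}\, E_{\lambda_0}\big[\operatorname{var}_{\nu^{\ell_i}_{\bar u^{(i)}}}(\bar G^{(i)})\big] = \tfrac{1}{2} C^{-1} \ell_i^{-2} \langle \bar G^{(i)}, \bar G^{(i)}\rangle_{\lambda_0},
\]
where the last equality uses that $\bar G^{(i)}$ has vanishing $\nu^{\ell_i}_{\rho}$-mean for every $\rho$.

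Combining everything gives
\[
   2\Big\langle \sum_i F_i, G\Big\rangle_{\lambda_0} - \|G\|_1^2 \leq \sum_{i=1}^m \Big( 2 \langle F_i, \bar G^{(i)}\rangle_{\lambda_0} - \tfrac{1}{2} C^{-1} \ell_i^{-2} \langle \bar G^{(i)}, \bar G^{(i)}\rangle_{\lambda_0}\Big),
\]
and completing the square term-by-term (each summand is bounded by $2C\ell_i^2 \langle F_i, F_i\rangle_{\lambda_0}$) yields the desired $\CH^{-1}$ estimate, after which Kipnis--Varadhan closes the proof. The one delicate point is justifying that the inter-block boundary bonds contribute nonnegatively so they can be dropped; this is automatic since each term in the sum defining $\|G\|_1^2$ is a nonnegative square. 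The main conceptual step is the block-wise localization of $G$ via $\bar G^{(i)}$, which is what unlocks the independent application of the spectral gap on each block and produces the additive bound on the right-hand side.
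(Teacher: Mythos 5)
Your proof is correct and follows essentially the route the paper intends: the paper omits the argument (referring to Gon\c calves--Jara), but it is exactly the block-wise version of the paper's own $\CH^{-1}$ bound (the lemma preceding the corollary) combined with the Kipnis--Varadhan lemma, which is what you carry out. The localization $\bar G^{(i)}$, the use of the vanishing-pairing lemma, the Dirichlet-form splitting over disjoint intra-block bonds, Caputo's spectral gap per block, and the term-by-term completion of the square all match the paper's argument.
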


\subsection{Equivalence of ensembles}\label{sec:equivEnsem}

Here we present a second order equivalence of ensembles result for the stationary measure $\mu_{\lambda_0}$. \assumptionV guarantees that the local limit
theorem holds uniformly in the parameter $\lambda$. To state this, we need
some more notation. If $(U_i)_{0 = 1}^{N - 1}$ is an i.i.d. family of random
variables distributed according to $p_{\lambda}$, then we write
$f^N_{\lambda}$ for the density of
\[ \frac{1}{\sqrt{N \rho'' (\lambda)}} \sum_{i = 0}^{N - 1} (U_i - \rho'
   (\lambda)) \nocomma. \]
We then have the following uniform local limit theorem:

\begin{lemma}[{\cite[Theorem~2.1]{bib:caputo}}]\label{lem:LLT}
  The potential $V$ is such that the local limit theorem holds uniformly in
  $\lambda$. More precisely,
  \[ | f^N_{\lambda} (u) - r_{\lambda, N} (u) | \lesssim N^{- 3 / 2}, \]
  uniformly in $u, \lambda$, where $r_{\lambda, N} = r^0 + N^{- 1 / 2}
  r_{\lambda}^1 + N^{- 1} r_{\lambda}^2$ with
  \[ r^0 (u) = \frac{1}{\sqrt{2 \pi}} e^{- u^2 / 2}, \hspace{2em}
     r^1_{\lambda} (u) = r^0 (u) \frac{m_{3, \lambda}}{6 \sigma^3_{\lambda}}
     H_3 (u), \]
  \[ r^2_{\lambda} (u) = r^0 (u) \left( \frac{m_{4, \lambda} - 3
     \sigma_{\lambda}^4}{24 \sigma_{\lambda}^4} H_4 (u) + \frac{m_{3,
     \lambda}^2}{72 \sigma^6_{\lambda}} H_6 (u) \right), \]
  and where $H_k$ denotes the $k$-th Hermite polynomial.
\end{lemma}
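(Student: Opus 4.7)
This is a classical Edgeworth expansion for a triangular array of i.i.d.\ sums, made uniform in the shift parameter~$\lambda$. The natural approach is Fourier inversion, so I would set
\[
   \phi_{\lambda}(t) \assign E_{\lambda}\!\left[\exp\!\left(i t \frac{U - \rho'(\lambda)}{\sigma_\lambda}\right)\right],
\]
and write
\[
   f^N_{\lambda}(u) = \frac{1}{2\pi} \int_{\R} e^{-itu}\, \phi_{\lambda}(t/\sqrt{N})^{N}\, \dd t.
\]
The task is to compare this integral with the Fourier inversion of $r^0 + N^{-1/2} r^1_\lambda + N^{-1} r^2_\lambda$ and control the remainder by $N^{-3/2}$ uniformly in $(u,\lambda)$.

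First I would collect the analytic inputs that Assumption~(V) provides. Since $V = \varphi + \psi$ with $\varphi'' \in [1/C,C]$ and $\psi \in C^2_b$, the measure $p_\lambda(u)\dd u$ is a bounded perturbation of a strongly log-concave density. Brascamp--Lieb type bounds then yield uniform control $\sigma_\lambda^2 \in [c,C']$ and, more generally, uniform bounds on all centered moments $m_{k,\lambda}$ for $k$ up to any fixed order (needed up to order~$5$ or~$6$ here). The same structure ensures a smooth characteristic function with cumulant expansion
\[
   \log \phi_\lambda(s) = -\tfrac{s^2}{2} + \tfrac{(is)^3}{6}\,\tfrac{m_{3,\lambda}}{\sigma_\lambda^3} + \tfrac{(is)^4}{24}\,\tfrac{m_{4,\lambda}-3\sigma_\lambda^4}{\sigma_\lambda^4} + O(|s|^5),
\]
with the $O$-constant uniform in $\lambda$.

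The bulk of the proof is then the standard split of the Fourier integral into a central region $|t| \le \delta \sqrt{N}$ and a tail region $|t| > \delta \sqrt{N}$, for $\delta>0$ small but fixed. On the central region I would substitute $s = t/\sqrt{N}$ in the cumulant expansion, exponentiate, and collect coefficients of $N^{0},\,N^{-1/2},\,N^{-1}$, so that
\[
   \phi_\lambda(t/\sqrt{N})^{N} = e^{-t^2/2}\!\left(1 + \tfrac{(it)^3}{6\sqrt{N}}\tfrac{m_{3,\lambda}}{\sigma_\lambda^3} + \tfrac{1}{N}\!\left[\tfrac{(it)^4(m_{4,\lambda}-3\sigma_\lambda^4)}{24\sigma_\lambda^4} + \tfrac{(it)^6 m_{3,\lambda}^2}{72\,\sigma_\lambda^6}\right]\right) + R_N(t,\lambda),
\]
with the remainder $R_N$ integrable in $t$ and bounded by $C(1+|t|^K)e^{-t^2/4} N^{-3/2}$ uniformly in $\lambda$. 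Fourier-inverting each Gaussian-times-polynomial factor gives $r^0 + N^{-1/2} r^1_\lambda + N^{-1} r^2_\lambda$ via the identity
\[
   \frac{1}{2\pi}\int_{\R} e^{-itu} (it)^{k} e^{-t^{2}/2}\,\dd t = r^{0}(u) H_{k}(u),
\]
which matches the formulas for $r^1_\lambda,\,r^2_\lambda$ in the statement.

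The key step I expect to be delicate is the tail estimate, that is, a \emph{uniform} Cram\'er condition
\[
   \sup_{\lambda \in \R}\ \sup_{|s| \ge \delta} |\phi_\lambda(s)| \le 1 - \eta(\delta) < 1,
\]
so that $|\phi_\lambda(t/\sqrt{N})|^N$ on $|t|>\delta\sqrt{N}$ decays faster than any polynomial in $N^{-1}$. Here one cannot directly exploit compactness in $\lambda$; instead I would use that $p_\lambda(u)\dd u$ is, after shifting by $\rho'(\lambda)$ and rescaling by $\sigma_\lambda$, a density of the form $Z^{-1}\exp(-\tilde\varphi_\lambda(u) - \tilde\psi_\lambda(u))$ with $\tilde\varphi_\lambda''$ uniformly bounded above and below in $\lambda$ and $\tilde\psi_\lambda$ uniformly bounded in $C^2_b$; such a family is tight and admits a uniform lower bound on the density of its absolutely continuous part on any fixed interval, which is known to imply a uniform Cram\'er bound. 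Combined with the cumulant expansion and the Fourier computations sketched above, this yields the claimed pointwise bound $|f^N_\lambda(u) - r_{\lambda,N}(u)| \lesssim N^{-3/2}$ uniformly in $(u,\lambda)$.
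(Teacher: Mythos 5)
This lemma is not proved in the paper at all: it is imported verbatim from Caputo \cite[Theorem~2.1]{bib:caputo}, and the paper's ``proof'' consists of that citation (just as the uniform moment bounds you invoke are themselves quoted as Lemma~\ref{lem:UB} from the same source). Your proposal is therefore a reconstruction of the cited external argument rather than an alternative to anything in the paper, and in outline it is the right reconstruction: Caputo's proof is exactly the classical Fourier-analytic Edgeworth expansion made uniform in $\lambda$ -- uniform cumulant control in the central region $|t|\le\delta\sqrt N$, a uniform Cram\'er-type decay of the characteristic function in the tail region, and Fourier inversion of the Gaussian-times-Hermite terms, whose coefficients you have matched correctly (including the $(it)^6 m_{3,\lambda}^2/72$ term coming from squaring the third-cumulant contribution). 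The one step you rightly flag as delicate, the uniform Cram\'er bound, is also where your sketch is thinnest: ``tight plus a uniform lower bound on the density of the absolutely continuous part'' should be made concrete, e.g.\ by noting that $p_\lambda$ is bounded above and below, up to the fixed factor $e^{\pm 2\|\psi\|_\infty}$, by the strongly log-concave density proportional to $e^{\lambda u-\varphi(u)}$, which (together with the uniform bounds on $\sigma_\lambda$) gives a lower bound $c\,\1_{[a,b]}$ for the standardized densities with constants independent of $\lambda$, and hence $\sup_{|s|\ge\delta}|\phi_\lambda(s)|\le 1-\eta(\delta)$ by splitting off a uniform component; you should also record why Fourier inversion is legitimate, namely that $V\in C^2$ with the stated structure yields $\|p_\lambda''\|_{L^1}$ bounded uniformly in $\lambda$ and thus $|\phi_\lambda(s)|\lesssim(1+s^2)^{-1}$. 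With these two points filled in, your plan is a faithful version of Caputo's proof; there is no genuinely different route here to compare against.
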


\begin{lemma}[{\cite[Lemma~2.2 and Lemma~2.4]{bib:caputo}}]
  \label{lem:UB}
  For all $k \in \mathbb{N}$ we have
  \[ \sup_{\lambda \in \mathbb{R}} \left( \frac{| m_{k, \lambda}
     |}{\sigma_{\lambda}^k} + \sigma^2_{\lambda} +
     \sigma^{- 2}_{\lambda} \right) < \infty . \]
\end{lemma}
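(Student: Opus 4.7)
The plan is to exploit \assumptionV to express each $p_\lambda$ as a bounded perturbation of a uniformly log-concave density, so that the bounds reduce to Brascamp--Lieb type inequalities together with a direct Gaussian comparison.

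First I would recenter: setting $q_\lambda(v) := p_\lambda(\rho'(\lambda)+v)$ gives a probability density with mean $0$ and $m_{k,\lambda} = \int v^k q_\lambda(v)\,\mathrm{d}v$. Defining $\tilde\varphi_\lambda(v) := \varphi(\rho'(\lambda)+v) - \lambda v$ and $\tilde\psi_\lambda(v) := \psi(\rho'(\lambda)+v)$, the density factors as $q_\lambda \propto \exp(-\tilde\varphi_\lambda - \tilde\psi_\lambda)$, where $\tilde\varphi_\lambda''(v) = \varphi''(\rho'(\lambda)+v) \in [1/C,\,C]$ and $\|\tilde\psi_\lambda\|_\infty \le \|\psi\|_\infty$, both uniform in $\lambda$. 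Hence $q_\lambda$ differs from the uniformly strongly log-concave density $\tilde q_\lambda \propto \exp(-\tilde\varphi_\lambda)$ only by a multiplicative factor lying in $[e^{-2\|\psi\|_\infty},\,e^{2\|\psi\|_\infty}]$, and the whole problem is transferred to controlling moments of $\tilde q_\lambda$ uniformly in $\lambda$.

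For the upper estimates on $\sigma_\lambda^2$ and on $|m_{k,\lambda}|$, I would apply the Brascamp--Lieb inequality to $\tilde q_\lambda$: strong log-concavity with modulus $1/C$ implies sub-Gaussian tails and therefore polynomial moments of every order that are bounded in terms of $C$ and $k$ alone. The bounded density ratio transfers these bounds to $q_\lambda$, yielding $\sigma_\lambda^2 \lesssim 1$ and $|m_{k,\lambda}| \lesssim_k 1$. For the lower bound $\sigma_\lambda^{-2} \lesssim 1$, I would exploit the complementary inequality $\tilde\varphi_\lambda'' \le C$: a second-order Taylor expansion around the unique minimum $v_*(\lambda)$ of $\tilde\varphi_\lambda$ yields $\tilde\varphi_\lambda(v) \le \tilde\varphi_\lambda(v_*) + C(v-v_*)^2/2$, so after controlling the normalization with the already-established upper moment bounds and the bound on $\tilde\psi_\lambda$, the density $q_\lambda$ is pointwise bounded below, up to a uniform constant, by a Gaussian of variance $1/C$ centered at $v_*(\lambda)$. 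Since the mean of $q_\lambda$ is zero, this forces $\sigma_\lambda^2 \ge c > 0$ uniformly.

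The main obstacle is precisely the lower bound on $\sigma_\lambda^2$: the Brascamp--Lieb inequality handles the upper-direction estimates cleanly, but the reverse direction requires showing that the measure is not too concentrated, which I obtain only through the explicit Gaussian minorant above. Once $\sigma_\lambda^2$ is trapped in a compact subinterval of $(0,\infty)$ uniformly in $\lambda$, the normalized ratios $|m_{k,\lambda}|/\sigma_\lambda^k$ are automatically controlled by the moment bounds from the previous step, which finishes the proof.
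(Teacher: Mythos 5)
Your argument is correct, but it takes a genuinely different route from the paper, because the paper does not prove this lemma at all: the statement is quoted directly from Caputo~\cite[Lemmas~2.2 and~2.4]{bib:caputo}, so the ``paper's proof'' is the citation. Your proposal reconstructs a self-contained proof: after recentering, the one-site density is a bounded multiplicative perturbation (factor in $[e^{-2\|\psi\|_\infty},e^{2\|\psi\|_\infty}]$) of a density whose log-Hessian lies in $[1/C,C]$, and that density is squeezed between two Gaussian envelopes centered at the minimizer $v_*(\lambda)$ of $\tilde\varphi_\lambda$: an upper envelope of variance $C$ yielding $\sigma_\lambda^2\lesssim 1$ and $|m_{k,\lambda}|\lesssim_k 1$, and a lower envelope of variance $1/C$ yielding $\sigma_\lambda^2\gtrsim 1$, after which the ratios $|m_{k,\lambda}|/\sigma_\lambda^k$ are trivially uniform. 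This is essentially the mechanism behind Caputo's own lemmas, so nothing is lost; what your version buys is self-containedness and the explicit observation that only $\varphi''\in[1/C,C]$ and $\|\psi\|_\infty<\infty$ enter (the $C^2_b$ control of $\psi'$, $\psi''$ is not needed here), whereas the citation buys brevity and consistency with Lemma~\ref{lem:LLT}, which also comes from Caputo and genuinely requires his finer local limit analysis. One small point to spell out: the sub-Gaussian envelope controls moments about $v_*(\lambda)$, not about the mean $0$ of $q_\lambda$; either note that centered moments are dominated, up to a factor $2^k$, by moments about an arbitrary point, or use the mean-zero property to get $|v_*(\lambda)|\le E_{q_\lambda}|v-v_*(\lambda)|\lesssim 1$ before converting to $m_{k,\lambda}$. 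With that remark the argument is complete.
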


With the help of these two lemmas we can derive the following bound. The proof is elementary but tedious and can be found in Appendix~\ref{app:equiv-ensem-proof}.

\begin{proposition}[Second order equivalence of ensembles]\label{prop:equiv-ensem}

Let $\ell \leqslant N / 2$ and let $F \in L^2 ({\mu}_{h' (\rho)})$ depend only on $u(0), \ldots, u(\ell - 1)$. Write
  \[
     \bar{u}^N \assign \frac{1}{N} u^N \assign \frac{1}{N} \sum_{k = 0}^{N - 1} u(k), \hspace{2em} \psi_F (N, \rho) \assign E_{\lambda_0} [F| \bar{u}^N = \rho], \hspace{2em}  \varphi_F (\rho) \assign E_{h' (\rho)} [F].
  \]
  Then
  \begin{align*}
     &E_{\lambda_0} \Big[ \Big| \psi_F (N, \bar{u}^N) - \varphi_F (\rho'
     (\lambda_0)) - \partial_{\rho} \varphi_F (\rho' (\lambda_0)) (
     \bar{u}^N - \rho' (\lambda_0) ) - \frac{1}{2} \partial_{\rho \rho}
     \varphi_F (\rho' (\lambda_0)) \big( (\bar{u}^N - \rho' (\lambda_0))^2 -
     \frac{\sigma_{\lambda_0}^2}{N} \big) \Big|^2 \Big] \\
     & \hspace{50pt} \lesssim \left( \frac{\ell}{N} \right)^3 \sup_{\lambda}
     \tmop{var}_{\lambda} (F).
  \end{align*}
\end{proposition}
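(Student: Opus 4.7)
The plan is to apply the uniform local limit theorem (Lemma~\ref{lem:LLT}) to derive an asymptotic expansion of $\psi_F(N,\rho)$ to order $1/N$, then substitute $\rho = \bar{u}^N$ and Taylor-expand around $\rho_0 := \rho'(\lambda_0)$. The key preliminary point is that formula~(\ref{eq:conditional density}) shows the conditional law of $(u(0),\ldots,u(\ell-1))$ given $\bar{u}^N = \rho$ is independent of $\lambda$, so we may freely compute $\psi_F(N, \rho)$ using the measure $\mu_{h'(\rho)}$, under which the block sum $Y := S_\ell - \ell\rho$, with $S_\ell = \sum_{i=0}^{\ell-1} u(i)$, is centered with variance $\ell\sigma^2_{h'(\rho)}$. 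This yields the representation
\[
   \psi_F(N, \rho) = E_{h'(\rho)}\left[F \cdot K_{N,\ell}(\rho, Y)\right], \qquad K_{N,\ell}(\rho, y) := \frac{p_{h'(\rho)}^{\ast (N-\ell)}(N\rho - \ell\rho - y)}{p_{h'(\rho)}^{\ast N}(N\rho)}.
\]

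Applying Lemma~\ref{lem:LLT} separately to numerator and denominator (and using $H_3(0)=0$ to annihilate the leading $N^{-1/2}$ term in the denominator) I expand
\[
   K_{N,\ell}(\rho, y) = 1 + N^{-1/2} B_1(\rho, y) + N^{-1} B_2(\rho, y) + R_K(N, \rho, y),
\]
where $B_1, B_2$ are explicit polynomials in $y$ whose coefficients depend smoothly on $\rho$ through $\sigma^2_{h'(\rho)}$ and the moments $m_{k, h'(\rho)}$ (uniformly bounded by Lemma~\ref{lem:UB}), and where $R_K$ is bounded by $N^{-3/2}$ times a polynomial in $y/\sqrt{N}$. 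Differentiating $\varphi_F(\rho) = E_{h'(\rho)}[F]$ once and twice in $\rho$ gives
\[
   \partial_\rho\varphi_F(\rho) = \sigma^{-2}_{h'(\rho)} E_{h'(\rho)}[FY], \qquad E_{h'(\rho)}[FY^2] = \sigma^4_{h'(\rho)}\partial_{\rho\rho}\varphi_F(\rho) + m_{3,h'(\rho)}\partial_\rho\varphi_F(\rho) + \ell\sigma^2_{h'(\rho)}\varphi_F(\rho),
\]
together with analogous identities for higher moments of $Y$. These let us rewrite each $E_{h'(\rho)}[F B_j(\rho, Y)]$ as a linear combination of $\varphi_F$, $\partial_\rho\varphi_F$, and $\partial_{\rho\rho}\varphi_F$ evaluated at $\rho$. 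A careful bookkeeping calculation---in which the Edgeworth $r^1_{\lambda}$-contribution precisely cancels a spurious $\partial_\rho\sigma^2\cdot\partial_\rho\varphi_F$ piece arising from the Gaussian main term $r^0$---collapses the entire $N^{-1}$ contribution into the clean form
\[
   \psi_F(N, \rho) = \varphi_F(\rho) - \frac{\sigma^2_{h'(\rho)}}{2N}\partial_{\rho\rho}\varphi_F(\rho) + R_\psi(N, \rho).
\]

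It remains to set $\rho = \bar{u}^N$ and Taylor-expand: $\varphi_F$ to second order around $\rho_0$, and $\sigma^2_{h'(\cdot)}\partial_{\rho\rho}\varphi_F(\cdot)$ to zeroth order. This recovers the target expression modulo $R_\psi$, a third-order Taylor remainder for $\varphi_F$, and a $1/N$-scaled first-order Taylor remainder for $\sigma^2_{h'(\cdot)}\partial_{\rho\rho}\varphi_F$. Each piece is then estimated in $L^2(\mu_{\lambda_0})$ using the moment bound $E_{\lambda_0}[(\bar{u}^N-\rho_0)^{2k}] \lesssim N^{-k}$. The \emph{main obstacle} is to ensure every $F$-dependent quantity appears only through $\tmop{var}(F)$ rather than $\|F\|_{L^2}^2$. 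The key observation is that each derivative $\partial^{(k)}_\rho\varphi_F(\rho)$ can be written as a sum of terms of the form $c_j(\rho)\cdot\tmop{cov}_{h'(\rho)}(F, Y^j)$: any $E_{h'(\rho)}[F Y^j]$ appearing in the derivative formulas combines with the compensating $\varphi_F(\rho)$ contributions to produce $E_{h'(\rho)}[(F-\varphi_F(\rho))Y^j]$, so that Cauchy--Schwarz yields $|\partial^{(k)}_\rho\varphi_F(\rho)|^2 \lesssim \ell^k \tmop{var}_{h'(\rho)}(F)$ uniformly in $\rho$. Combining this with $E_{\lambda_0}[(\bar{u}^N-\rho_0)^{2k}] \lesssim N^{-k}$ gives each remainder a squared $L^2$-norm of order $(\ell/N)^3 \sup_\lambda\tmop{var}_\lambda(F)$, completing the proof.
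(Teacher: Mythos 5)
Your plan follows essentially the same route as the paper's proof in Appendix~\ref{app:equiv-ensem-proof}: the representation of $\psi_F(N,\rho)$ through the ratio of convolution densities from~\eqref{eq:conditional density}, the uniform local limit theorem (Lemma~\ref{lem:LLT}) expanded to order $N^{-1}$, the identification of the resulting coefficients with $\varphi_F(\rho)-\frac{\sigma^2_{h'(\rho)}}{2N}\partial_{\rho\rho}\varphi_F(\rho)$ via the exact derivative formulas (this is Lemma~\ref{lem:equiv ensem} combined with Corollary~\ref{cor:equivalence of ensembles}), and then the Taylor expansion at $\rho=\bar u^N$ around $\rho'(\lambda_0)$ using $E_{\lambda_0}[(\bar u^N-\rho'(\lambda_0))^{2k}]\lesssim N^{-k}$ and variance-controlled bounds on the derivatives of $\varphi_F$. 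The only point worth adding is that the paper obtains variance (rather than $L^2$-norm) control of the kernel remainder by observing that the quantity being estimated is unchanged when a constant is added to $F$, so one may assume $E_{h'(\rho)}[F]=0$ before applying Cauchy--Schwarz; your covariance observation handles the derivative terms, and this one-line centering remark completes the same argument for the remainder.
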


\begin{remark}
   We will later apply this with $F(u) = V'(u(0))$ for which an integration by parts yields
   \[
      E_\lambda[F] = \int_\R V'(u) p_{\lambda}(u) \dd u = \int_\R ( V'(u) - \lambda) p_{\lambda}(u) \dd u + \lambda = - \int_\R \partial_u p_{\lambda}(u) \dd u + \lambda =\lambda
   \]
   and then
   \[
      \mathrm{var}_\lambda(F) = \int_\R (V'(u) - \lambda)^2 p_{\lambda}(u) \dd u = \int_\R ( V'(u) - \lambda)(- \partial_u p_{\lambda}(u)) \dd u  =  \int_\R V''(u) p_{\lambda}(u) \dd u.
   \]
   We assumed that $V'$ is Lipschitz-continuous, so the supremum in $\lambda$ of the right hand side is finite. If however $V''$ was unbounded, then the supremum in $\lambda$ of the right hand side would certainly be infinite: we know from Lemma~\ref{lem:UB} that the variance of $u(0)$ stays uniformly bounded in $\lambda$ while by varying $\lambda$ we can achieve any mean $\rho'(\lambda)$ for $u(0)$ and in particular we can send $\rho'(\lambda)$ to those regions where $V''$ is very large. So if we wanted to deal with non-Lipschitz continuous $V'$, then we would need to be more careful in the estimates leading to Proposition~\ref{prop:equiv-ensem}.
\end{remark}

\subsection{Derivation of the second order Boltzmann-Gibbs principle}\label{sec:BG}

We can now combine Corollary~\ref{cor:GJ corollary 1} and
Proposition~\ref{prop:equiv-ensem} to derive the second order Boltzmann-Gibbs
principle. As Gon{\c c}alves and Jara point out in \cite{bib:goncalvesJara},
these two corollaries are the only ingredients needed to make their proof of
the second order Boltzmann-Gibbs principle work. And although we are in a
different setting we can indeed proceed by building on the same lemmas as they do, and prove those lemmas using the same arguments provided in
\cite{bib:goncalvesJara}. So we will omit most of the proofs and simply include
references to the corresponding results in that work.
Let us introduce the notation
\[ (\tau_k u) (\ell) = u (k + \ell) \]
for all $u \in \mathbb{R}^{\mathbb{Z}}$, $k, \ell \in \mathbb{Z}$. If $F :
\mathbb{R}^{\mathbb{Z}} \rightarrow \mathbb{R}$ is a local function, we
write $\tau_k F (u) = F (\tau_k u)$.

\begin{lemma}[One block estimate, see also~{\cite[Lemma~1]{bib:goncalvesJara}}]
  \label{lem:one block}
  Let $F \in L^2 ({\mu}_{\lambda})$ be a local function that depends only
  on $u (0), \ldots, u (\ell_0 - 1)$ and let $g \in L^2 ([0, T] \times
  \mathbb{Z})$. Then for all $\lambda \in \mathbb{R}$
  \[ \mathbb{E}_{{\mu}_{\lambda}} \Big[ \Big( \int_0^T \sum_k \tau_k (F
     (u^n_s) - \psi_F (\ell_0, \overline{(u^n_s)}^{\ell_0})) g_s (k) \mathd s
     \Big)^2 \Big] \lesssim \frac{\ell_0^3}{n^2} \| g \|_{L^2 ([0, T]
     \times \mathbb{Z})}^2 \tmop{var}_{\lambda} (F) . \]
\end{lemma}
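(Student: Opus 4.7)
The plan is to reduce the estimate to Corollary~\ref{cor:GJ corollary 1} applied to the centered local function
\[
   F_0 := F - \psi_F(\ell_0, \bar{u}^{\ell_0}),
\]
which depends only on $u(0),\ldots,u(\ell_0-1)$.

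First I would verify the hypothesis of Corollary~\ref{cor:GJ corollary 1} that $E_\lambda[\tau_k F_0] = 0$ for every $\lambda\in\R$ and every $k$. The crucial input is formula~\eqref{eq:conditional density}: the conditional distribution of $(u(0),\ldots,u(\ell_0-1))$ under $\mu_\lambda$ given the block average $\bar{u}^{\ell_0}$ coincides with $\nu^{\ell_0}_{\bar{u}^{\ell_0}}$ and is in particular independent of $\lambda$. Consequently $E_\lambda[F\,|\,\bar{u}^{\ell_0}] = \psi_F(\ell_0,\bar{u}^{\ell_0})$ for every $\lambda$, so $F_0$ and each translate $\tau_k F_0$ has vanishing mean under every $\mu_\lambda$.

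Next, I would partition $\sum_{k\in\Z}$ into $\ell_0$ residue classes modulo $\ell_0$. Within a fixed class $r\in\{0,\ldots,\ell_0-1\}$ the supports $[r+j\ell_0,\,r+(j+1)\ell_0-1]$, $j\in\Z$, of the translates $\tau_{r+j\ell_0}F_0$ are pairwise disjoint and consecutive, fitting the structure of Corollary~\ref{cor:GJ corollary 1} with $\ell_i\equiv\ell_0$. Applying the corollary to this partial sum (with $F_i(s,u)=\tau_{r+i\ell_0}F_0(u)\,g_s(r+i\ell_0)$) gives an upper bound of order
\[
   \frac{\ell_0^2}{n^2}\sum_{j\in\Z}\int_0^T g_s(r+j\ell_0)^2\,\dd s\cdot \langle F_0,F_0\rangle_{\lambda_0},
\]
where I use translation invariance of $\mu_{\lambda_0}$ to drop the index $k$ inside the norm and the estimate $\langle F_0,F_0\rangle_{\lambda_0}\le\mathrm{var}_{\lambda_0}(F)$ coming from the fact that $\psi_F(\ell_0,\bar{u}^{\ell_0})$ is the $L^2$-projection of $F$ onto $\sigma(\bar{u}^{\ell_0})$.

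Finally, I would apply the Cauchy--Schwarz bound $(\sum_{r=0}^{\ell_0-1}A_r)^2\le\ell_0\sum_{r}A_r^2$ inside the squared expectation to pass from the sum over residue classes back to the full sum over $k$. This produces one extra factor $\ell_0$ and reconstitutes the full $\ell^2$-norm $\|g\|_{L^2([0,T]\times\Z)}^2$, yielding the claimed $\ell_0^3/n^2$ scaling. I do not expect any essential obstacle in this argument: the only genuinely delicate point is the $\lambda$-independence of the conditional distribution used in the very first step, which is precisely what~\eqref{eq:conditional density} provides; everything afterwards is a clean reassembly of Corollary~\ref{cor:GJ corollary 1}.
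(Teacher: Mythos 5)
Your argument is correct and is essentially the proof the paper omits (it refers to Gon\c calves--Jara, Lemma~1): centering by the conditional expectation, whose $\lambda$-independence is exactly \eqref{eq:conditional density}, splitting the sum over $k$ into $\ell_0$ residue classes of disjoint consecutive blocks, applying Corollary~\ref{cor:GJ corollary 1} with $\ell_i \equiv \ell_0$, and using Cauchy--Schwarz over the classes to get $\ell_0 \cdot \ell_0^2/n^2$, together with $\langle F_0,F_0\rangle \le \mathrm{var}(F)$ by the projection property. The only cosmetic points are that Corollary~\ref{cor:GJ corollary 1} is stated for finitely many blocks, so the bi-infinite sum over $j$ needs a routine truncation-and-limit step (harmless since the right-hand side is summable because $g \in L^2([0,T]\times\Z)$), and that the norms should be taken at the general $\lambda$ of the statement rather than at $\lambda_0$, which changes nothing since the corollary holds verbatim for any fixed $\lambda$.
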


\begin{remark}
  Here we will only apply this with $\ell_0 = 1$ for which the left
  hand side simply vanishes. However, the lemma will allow us to prove the
  second order Boltzmann-Gibbs principle for general $\ell_0$, which may be a
  useful result in itself.
\end{remark}

\begin{lemma}[Renormalization step, see also~{\cite[Lemma~2]{bib:goncalvesJara}}]
  Let $F \in \bigcap_{\lambda \in \mathbb{R}} L^2 ({\mu}_{\lambda})$
  be a local function that depends only on $u (0), \ldots, u (\ell_0 - 1)$ and
  let $g \in L^2 ([0, T] \times \mathbb{Z})$. Then for all $\ell \geqslant
  \ell_0$
  \[ \mathbb{E}_{{\mu}_{\lambda_0}} \Big[ \Big( \int_0^T \sum_k \tau_k
     \big( \psi_F ( \ell, \overline{(u^{n_{}}_s)}^{\ell} ) -
     \psi_F ( 2 \ell, \overline{(u^{n_{}}_s)}^{2 \ell} ) \big)
     g_s (k) \mathd s \Big)^2 \Big] \lesssim_{\ell_0}
     \frac{\ell^{\beta}}{n^2} \| g \|_{L^2 ([0, T] \times \mathbb{Z})}^2
     \sup_{\lambda} \tmop{var}_{\lambda} (F), \]
  where
  \begin{enumerateroman}
    \item $\beta = 2$ if $\partial_{\rho} \varphi_F |_{\rho = h' (\lambda_0)}
    \neq 0$,
    
    \item $\beta = 1$ if $\partial_{\rho} \varphi_F |_{\rho = h' (\lambda_0)}
    = 0$,
    
    \item $\beta = 0$ if $\partial_{\rho} \varphi_F |_{\rho = h' (\lambda_0)}
    = \partial_{\rho \rho} \varphi_F |_{\rho = h' (\lambda_0)} = 0$.
  \end{enumerateroman}
\end{lemma}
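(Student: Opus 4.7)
The plan is to follow the approach of~\cite{bib:goncalvesJara}: observe that the integrand is centered under every product measure $\mu_\lambda$, apply Corollary~\ref{cor:GJ corollary 1} after grouping the translations into blocks with pairwise disjoint supports, and finally estimate the resulting $L^2$ norm via the second order equivalence of ensembles (Proposition~\ref{prop:equiv-ensem}).

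Set $G_\ell(u) := \psi_F(\ell, \bar u^\ell) - \psi_F(2\ell, \bar u^{2\ell})$. Because the conditional density in~\eqref{eq:conditional density} is independent of $\lambda$, we have $E_\lambda[F \mid \bar u^N = x] = \psi_F(N,x)$ for every $\lambda$, and the tower property yields $E_\lambda[\psi_F(N, \bar u^N)] = E_\lambda[F]$. Subtracting this identity at $N = \ell$ and $N = 2\ell$ gives $E_\lambda[G_\ell] = 0$ for every $\lambda \in \mathbb{R}$, while $G_\ell$ is manifestly local with support in $\{0, \ldots, 2\ell-1\}$. To apply Corollary~\ref{cor:GJ corollary 1}, decompose each shift uniquely as $k = 2\ell m + j$ with $j \in \{0,1,\dots,2\ell-1\}$ and $m\in\mathbb{Z}$. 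For every fixed $j$ the supports of $\{\tau_{2\ell m + j} G_\ell\}_{m\in\mathbb{Z}}$ are pairwise disjoint, so the corollary applies with each block of size $2\ell$ and produces
\[
   E_{\mu_{\lambda_0}}\Big[\sup_{t\le T}\Big(\int_0^t \sum_m g_s(2\ell m + j)\, \tau_{2\ell m + j} G_\ell(u^n_s)\,\mathd s\Big)^2\Big] \lesssim \frac{\ell^2}{n^2}\,\|G_\ell\|^2_{\lambda_0} \sum_m \int_0^T g_s(2\ell m + j)^2\,\mathd s.
\]
Summing over the $2\ell$ residue classes $j$ and applying Cauchy--Schwarz (at the cost of a factor $2\ell$) leads to
\[
   E_{\mu_{\lambda_0}}\Big[\sup_{t\le T}\Big(\int_0^t \sum_k g_s(k)\, \tau_k G_\ell(u^n_s)\,\mathd s\Big)^2\Big] \lesssim \frac{\ell^3}{n^2}\,\|G_\ell\|^2_{\lambda_0}\,\|g\|^2_{L^2([0,T]\times\mathbb{Z})}.
\]

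It remains to control $\|G_\ell\|^2_{\lambda_0}$. Applying Proposition~\ref{prop:equiv-ensem} twice (at sizes $\ell$ and $2\ell$, which requires $\ell \geq 2\ell_0$; the finitely many cases $\ell_0 \le \ell < 2\ell_0$ are handled by a crude bound absorbed in the $\ell_0$-dependent constant) and subtracting the two expansions around $\rho_0 := \rho'(\lambda_0)$ gives
\[
 G_\ell = \varphi'_F(\rho_0)\,(\bar u^\ell - \bar u^{2\ell}) + \tfrac{1}{2}\varphi''_F(\rho_0)\Big[(\bar u^\ell - \rho_0)^2 - (\bar u^{2\ell} - \rho_0)^2 - \tfrac{\sigma^2_{\lambda_0}}{2\ell}\Big] + (R_\ell - R_{2\ell}),
\]
with $\|R_N\|^2_{\lambda_0} \lesssim_{\ell_0} N^{-3}\sup_\lambda \mathrm{var}_\lambda(F)$. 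Since the $u(j)$ are i.i.d.\ under $\mu_{\lambda_0}$, elementary computations give $\|\bar u^\ell - \bar u^{2\ell}\|^2_{\lambda_0} \lesssim \ell^{-1}$ (in fact exactly $\sigma^2_{\lambda_0}/(2\ell)$) and, after the centering by $\sigma^2_{\lambda_0}/(2\ell)$, the quadratic difference satisfies $\|(\bar u^\ell - \rho_0)^2 - (\bar u^{2\ell} - \rho_0)^2 - \sigma^2_{\lambda_0}/(2\ell)\|^2_{\lambda_0} \lesssim \ell^{-2}$. Depending on which derivatives of $\varphi_F$ vanish at $\rho_0$, this yields $\|G_\ell\|^2_{\lambda_0} \lesssim \ell^{-1}$, $\ell^{-2}$, or $\ell^{-3}$, which combined with the intermediate bound produces exactly the claimed exponents $\beta = 2$, $1$, and $0$.

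The main obstacle is the combinatorial bookkeeping of the second step: consecutive translates $\tau_k G_\ell$ have overlapping supports so Corollary~\ref{cor:GJ corollary 1} cannot be applied directly to the full sum, and the residue-class trick together with the Cauchy--Schwarz loss is what turns the naive $\ell^2/n^2$ bound into $\ell^3/n^2$. This loss is then absorbed by the sharp cancellation of the non-random terms $\sigma^2_{\lambda_0}/\ell$ and $\sigma^2_{\lambda_0}/(2\ell)$ in the quadratic expansion, which pushes the quadratic contribution down from order $\ell^{-1}$ to order $\ell^{-2}$ and is precisely what makes the intermediate regime $\beta = 1$ of the trichotomy possible.
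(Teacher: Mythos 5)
Your proof is correct and follows essentially the same route as the paper, which defers to the argument of Gon\c calves and Jara: centering of $\psi_F(\ell,\bar{u}^{\ell})-\psi_F(2\ell,\bar{u}^{2\ell})$ under every $\mu_\lambda$, splitting the translates into $2\ell$ residue classes so that the disjoint-support estimate of Corollary~\ref{cor:GJ corollary 1} applies (with the Cauchy--Schwarz loss of a factor $\ell$), and then bounding $\|\psi_F(\ell,\bar{u}^{\ell})-\psi_F(2\ell,\bar{u}^{2\ell})\|_{L^2(\mu_{\lambda_0})}$ by the second order equivalence of ensembles, Proposition~\ref{prop:equiv-ensem}, which yields the trichotomy in $\beta$. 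The only detail left implicit is that $|\partial_{\rho}\varphi_F(\rho'(\lambda_0))|^2$ and $|\partial_{\rho\rho}\varphi_F(\rho'(\lambda_0))|^2$ must themselves be bounded by $\sup_{\lambda}\tmop{var}_{\lambda}(F)$ up to $\ell_0$-dependent constants (after subtracting a constant from $F$, which changes neither side), exactly as is done at the end of the proof of Theorem~\ref{thm:first order BG}.
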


\begin{lemma}[Two blocks estimate, see also~{\cite[Lemma~3]{bib:goncalvesJara}}]
  \label{lem:two block}
  Let $F \in \bigcap_{\lambda \in \mathbb{R}} L^2 ({\mu}_{\lambda})$ be a
  local function that depends only on $u (0), \ldots, u (\ell_0 - 1)$ and let
  $g \in L^2 ([0, T] \times \mathbb{Z})$. Then for any $\ell \geqslant
  \ell_0$
  \[ \mathbb{E}_{{\mu}_{\lambda_0}} \Big[ \Big( \int_0^T \sum_k \tau_k
     \big( \psi_F ( \ell_0, \overline{(u^{n_{}}_s)}^{\ell_0} ) -
     \psi_F ( \ell, \overline{(u^{n_{}}_s)}^{\ell} ) \big) g_s
     (k) \mathd s \Big)^2 \Big] \lesssim_{\ell_0} \frac{\beta_{\ell}}{n^2}
     \| g \|_{L^2 ([0, T] \times \mathbb{Z})}^2 \sup_{\lambda}
     \tmop{var}_{\lambda} (F), \]
  where
  \begin{enumerateroman}
    \item $\beta_{\ell} = \ell^2$ if $\partial_{\rho} \varphi_F |_{\rho = h'
    (\lambda_0)} \neq 0$,
    
    \item $\beta_{\ell} = \ell$ if $\partial_{\rho} \varphi_F |_{\rho = h'
    (\lambda_0)} = 0$,
    
    \item $\beta_{\ell} = (\log \ell)^2$ if $\partial_{\rho} \varphi_F |_{\rho
    = h' (\lambda_0)} = \partial_{\rho \rho} \varphi_F |_{\rho = h'
    (\lambda_0)} = 0$.
  \end{enumerateroman}
\end{lemma}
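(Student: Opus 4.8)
The plan is to follow the structure of~\cite[Lemma~3]{bib:goncalvesJara}: the two blocks estimate will be obtained by telescoping between the scales $\ell_0$ and $\ell$ through a dyadic sequence of intermediate block sizes and summing the contributions of each renormalization step. Concretely, assuming without loss of generality that $\ell = 2^M \ell_0$ for some $M \in \mathbb{N}$ (the general case follows by monotonicity in $\ell$, up to adjusting constants), I would write
\[
   \psi_F(\ell_0, \overline{(u^n_s)}^{\ell_0}) - \psi_F(\ell, \overline{(u^n_s)}^{\ell}) = \sum_{j=0}^{M-1} \big( \psi_F(2^j \ell_0, \overline{(u^n_s)}^{2^j \ell_0}) - \psi_F(2^{j+1} \ell_0, \overline{(u^n_s)}^{2^{j+1} \ell_0}) \big),
\]
so that inside the square on the left hand side we have a sum of $M$ terms, each of which is precisely of the form controlled by the Renormalization step lemma applied at block size $2^j \ell_0 \geqslant \ell_0$.

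The second step is to apply the elementary inequality $\big( \sum_{j=0}^{M-1} a_j \big)^2 \leqslant M \sum_{j=0}^{M-1} a_j^2$ after taking the supremum over $t$ and the expectation; since the supremum of a sum is bounded by the sum of the suprema (and then Cauchy--Schwarz in the finite sum over $j$), we get
\[
   \mathbb{E}_{{\mu}_{\lambda_0}} \Big[ \sup_{t} \Big( \int_0^t \sum_k \tau_k \big( \psi_F(\ell_0, \cdummy) - \psi_F(\ell, \cdummy) \big) g_s(k) \mathd s \Big)^2 \Big] \lesssim M \sum_{j=0}^{M-1} \mathbb{E}_{{\mu}_{\lambda_0}} \Big[ \sup_t \Big( \int_0^t \sum_k \tau_k \big( \psi_F(2^j \ell_0, \cdummy) - \psi_F(2^{j+1}\ell_0, \cdummy) \big) g_s(k) \mathd s \Big)^2 \Big].
\]
By the Renormalization step lemma each summand is $\lesssim_{\ell_0} (2^j \ell_0)^{\beta} n^{-2} \|g\|_{L^2}^2 \sup_\lambda \tmop{var}_\lambda(F)$, where $\beta \in \{0,1,2\}$ is determined by the vanishing or not of $\partial_\rho \varphi_F$ and $\partial_{\rho\rho}\varphi_F$ at $h'(\lambda_0)$ exactly as in that lemma. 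It then remains to sum the geometric-type series $M \sum_{j=0}^{M-1} (2^j \ell_0)^\beta$ and to express the result in terms of $\ell = 2^M \ell_0$: when $\beta = 2$ the sum is dominated by its last term and gives $\lesssim_{\ell_0} M \cdot 4^M \ell_0^2 \lesssim_{\ell_0} \ell^2$ after absorbing the extra $\log$-type factor $M \asymp \log(\ell/\ell_0)$ into the polynomial power (one can in fact avoid the stray logarithm by grouping the dyadic blocks more carefully, as in~\cite{bib:goncalvesJara}, so that the prefactor $M$ is not needed — this is the one point requiring a little care); when $\beta = 1$ one gets $\lesssim_{\ell_0} M \cdot 2^M \ell_0 \lesssim_{\ell_0} \ell$, again after the same grouping trick; and when $\beta = 0$ the summand bound is $\lesssim_{\ell_0} n^{-2}\|g\|_{L^2}^2 \sup_\lambda \tmop{var}_\lambda(F)$ uniformly in $j$, so $M$ copies of it sum to $\lesssim_{\ell_0} M^2 n^{-2} \cdots \asymp (\log \ell)^2 n^{-2}\cdots$, which is the claimed $\beta_\ell = (\log \ell)^2$.

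The main obstacle I anticipate is purely bookkeeping rather than conceptual: making sure the dyadic telescoping is carried out so that the logarithmic factor $M$ coming from the Cauchy--Schwarz step is actually absorbed, i.e.\ recovering the \emph{sharp} exponents $\beta_\ell = \ell^2, \ell, (\log\ell)^2$ rather than losing a spurious $\log \ell$ in the first two cases. This is handled in~\cite{bib:goncalvesJara} by estimating $\sup_t \big(\int_0^t (F_1 + \cdots + F_m)\big)^2$ \emph{jointly} via Corollary~\ref{cor:GJ corollary 1} (which is already stated above in the form we need) applied to the family of functions $F_i(s,\cdummy) = \psi_F(2^{i-1}\ell_0, \overline{(u^n_s)}^{2^{i-1}\ell_0}) - \psi_F(2^i \ell_0, \overline{(u^n_s)}^{2^i\ell_0})$, which depend on disjoint-enough blocks of coordinates after appropriate shifts, rather than by crudely splitting the supremum of the sum into a sum of suprema; this yields directly $\sum_i \ell_i^2 n^{-2} \langle F_i, F_i\rangle_{\lambda_0}$ with $\ell_i \asymp 2^i \ell_0$, and the geometric sum then closes without the extra factor. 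I would reproduce that argument verbatim, only substituting our Proposition~\ref{prop:equiv-ensem} for the corresponding equivalence-of-ensembles input and our Corollary~\ref{cor:GJ corollary 1} for theirs, and hence simply refer to~\cite[Lemma~3]{bib:goncalvesJara} for the remaining routine details.
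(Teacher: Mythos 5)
Your overall skeleton is the intended one: the paper gives no proof of Lemma~\ref{lem:two block} and defers to~\cite[Lemma~3]{bib:goncalvesJara}, whose argument is exactly the dyadic telescoping $\psi_F(\ell_0,\cdot)-\psi_F(\ell,\cdot)=\sum_j\big(\psi_F(2^j\ell_0,\cdot)-\psi_F(2^{j+1}\ell_0,\cdot)\big)$ combined with the renormalization step lemma at each scale. However, the one point you yourself flag as delicate --- recovering the sharp exponents $\ell^2,\ \ell,\ (\log\ell)^2$ without a spurious logarithm --- is resolved incorrectly in your write-up. First, the claim that $M\cdot 4^M\ell_0^2\lesssim_{\ell_0}\ell^2$ ``after absorbing the extra $\log$-type factor into the polynomial power'' is false: with $\ell\asymp 2^M\ell_0$ this quantity is of order $\ell^2\log\ell$, and an implicit constant cannot depend on $\ell$, so the Cauchy--Schwarz route genuinely loses a logarithm in cases (i) and (ii). Second, the repair you propose --- applying Corollary~\ref{cor:GJ corollary 1} \emph{jointly} to the family $F_i(s,\cdot)=\psi_F(2^{i-1}\ell_0,\overline{(u^n_s)}^{2^{i-1}\ell_0})-\psi_F(2^{i}\ell_0,\overline{(u^n_s)}^{2^{i}\ell_0})$ --- does not meet the hypotheses of that corollary: it requires the $F_i$ to depend on \emph{disjoint} windows of coordinates $u(k_i),\dots,u(k_{i+1}-1)$, whereas here every coarser-scale average $\overline{u}^{\,2^{i}\ell_0}$ depends on $u(0),\dots,u(2^{i}\ell_0-1)$ and hence contains all the finer blocks; no shift can make these cross-scale differences disjointly supported. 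In Gon\c calves--Jara, Corollary~\ref{cor:GJ corollary 1} is used \emph{inside} the renormalization step, to handle jointly the translates over disjoint blocks of size $2\ell$ at a single fixed scale, not across scales.

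The correct (and elementary) way to pass from the renormalization step to the two blocks estimate is Minkowski's inequality in $L^2(\mathbb{E}_{\mu_{\lambda_0}})$: bound the $L^2$ norm of the telescoped time integral by the sum over $j$ of the $L^2$ norms of the individual renormalization-step integrals, each of which is $\lesssim_{\ell_0}(2^j\ell_0)^{\beta/2}\,n^{-1}\,\|g\|_{L^2([0,T]\times\mathbb{Z})}\,\big(\sup_\lambda\tmop{var}_\lambda(F)\big)^{1/2}$, and only then square. The geometric sums $\sum_j 2^j\ell_0\lesssim\ell$ and $\sum_j(2^j\ell_0)^{1/2}\lesssim\ell^{1/2}$ give exactly $\beta_\ell=\ell^2$ and $\beta_\ell=\ell$, while in case (iii) the constant summands give $\sum_j 1\asymp\log\ell$ and hence $(\log\ell)^2$ after squaring. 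With this substitution (plus one extra single step to handle $\ell$ not of the form $2^M\ell_0$, exactly as in the renormalization lemma with $2\ell$ replaced by a block size between $\ell'$ and $2\ell'$), your argument matches the one the paper intends to import from~\cite{bib:goncalvesJara}.
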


The next Lemma~\ref{lem:linear} has no equivalent result in \cite{bib:goncalvesJara}, but Lemma~\ref{lem:quadratic-corrector} does and is a second order version of Lemma~\ref{lem:linear} which follows from the same proof.

\begin{lemma}
  \label{lem:linear}Let $F \in \bigcap_{\lambda \in \mathbb{R}} L^2
  ({\mu}_{\lambda})$ be a local function that depends only on $u (0),
  \ldots, u (\ell_0 - 1)$ and assume that $\varphi_F (\rho' (\lambda_0)) = 0$.
  Let $g \in L^2 ([0, T] \times \mathbb{Z})$. Then
  \begin{align*}
     &\mathbb{E}_{{\mu}_{\lambda_0}} \Big[ \Big( \int_0^T \sum_k \tau_k
     \big( \psi_F ( \ell, \overline{(u^{n_{}}_s)}^{\ell} ) -
     \partial_{\rho} \varphi_F (\rho' (\lambda_0)) (\overline{(u^n_s)}^{\ell}
     - \rho' (\lambda_0)) \big) g_s (k) \mathd s \Big)^2 \Big] \\
     & \hspace{50pt} \lesssim_{\ell_0} \frac{T}{\ell} \| g \|_{L^2 ([0, T] \times
     \mathbb{Z})}^2 \sup_{\lambda} \tmop{var}_{\lambda} (F) 
  \end{align*}
\end{lemma}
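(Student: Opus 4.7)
The plan is to apply the second-order equivalence of ensembles (Proposition~\ref{prop:equiv-ensem}) with $N = \ell$ (so that $\ell_0$ plays the role of the support size denoted $\ell$ there) in order to Taylor-expand $\psi_F(\ell, \bar{u}^\ell)$ around $\rho = \rho'(\lambda_0)$. Using the hypothesis $\varphi_F(\rho'(\lambda_0)) = 0$, this produces the decomposition
\[
\psi_F(\ell, \bar{u}^\ell) - \partial_\rho \varphi_F(\rho'(\lambda_0))\bigl(\bar{u}^\ell - \rho'(\lambda_0)\bigr) = Q_\ell + R_\ell,
\]
where $Q_\ell \assign \tfrac12 \partial_{\rho\rho}\varphi_F(\rho'(\lambda_0))\bigl((\bar{u}^\ell - \rho'(\lambda_0))^2 - \sigma^2_{\lambda_0}/\ell\bigr)$ is the explicit quadratic correction and the remainder satisfies $E_{\lambda_0}[R_\ell^2] \lesssim (\ell_0/\ell)^3 \sup_\lambda \tmop{var}_\lambda(F)$. (The regime $\ell < 2\ell_0$ is trivial up to an $\ell_0$-dependent constant, so I would restrict to $\ell \geq 2\ell_0$ throughout.)

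The key observation is that both $Q_\ell$ and $R_\ell$ are local functions supported on $\{0,\dots,\ell-1\}$ and have zero mean under $\mu_{\lambda_0}$. Since $\mu_{\lambda_0}$ is a product measure, the translates $\tau_k H, \tau_{k'} H$ of any such local mean-zero $H$ depend on disjoint coordinates---and are thus orthogonal in $L^2(\mu_{\lambda_0})$---as soon as $|k-k'|\ge \ell$. Combined with stationarity, Young's inequality for convolutions on $\Z$, and Cauchy-Schwarz in time, this yields the clean bound
\[
E_{\lambda_0}\Big[\Big(\int_0^T \sum_{k\in\Z} g_s(k)\, \tau_k H(u^n_s)\, \mathd s\Big)^2\Big] \le 2 T \ell \, E_{\lambda_0}[H^2]\, \|g\|_{L^2([0,T]\times\Z)}^2
\]
for any local mean-zero $H$ supported on $\{0,\dots,\ell-1\}$; note that the right-hand side is $n$-independent.

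I would then apply this estimate separately to $H = R_\ell$ and $H = Q_\ell$. For $H = R_\ell$ it produces a contribution $\lesssim T\ell \cdot (\ell_0/\ell)^3 \sup_\lambda \tmop{var}_\lambda(F)\,\|g\|^2 \lesssim_{\ell_0} (T/\ell)\sup_\lambda \tmop{var}_\lambda(F)\|g\|^2$ since $\ell \ge \ell_0 \ge 1$. For $H = Q_\ell$, the i.i.d.\ structure of $\bar{u}^\ell - \rho'(\lambda_0)$ under $\mu_{\lambda_0}$ yields $E_{\lambda_0}\bigl[\bigl((\bar{u}^\ell - \rho'(\lambda_0))^2 - \sigma^2_{\lambda_0}/\ell\bigr)^2\bigr] \lesssim \ell^{-2}$ by a direct fourth-moment computation, and differentiating $\varphi_F(\rho) = E_{h'(\rho)}[F]$ twice gives $|\partial_{\rho\rho}\varphi_F(\rho'(\lambda_0))|^2 \lesssim_{\ell_0} \sup_\lambda \tmop{var}_\lambda(F)$, so that $E_{\lambda_0}[Q_\ell^2] \lesssim_{\ell_0} \ell^{-2}\sup_\lambda \tmop{var}_\lambda(F)$ and the contribution of $Q_\ell$ is again $\lesssim_{\ell_0} (T/\ell) \sup_\lambda \tmop{var}_\lambda(F)\|g\|^2$. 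Summing the two contributions gives the claim.

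The main technical nuisance will be the derivative bound $|\partial_{\rho\rho}\varphi_F(\rho'(\lambda_0))|^2 \lesssim_{\ell_0} \sup_\lambda \tmop{var}_\lambda(F)$, which requires writing the first two derivatives of $\rho \mapsto E_{h'(\rho)}[F]$ as expectations of $F$ against centered polynomials in $u(0),\dots,u(\ell_0-1)$ and then combining Cauchy-Schwarz in $L^2(\mu_\lambda)$ with the uniform moment bounds of Lemma~\ref{lem:UB}. Conceptually, the important point is that here the \emph{product structure} of $\mu_{\lambda_0}$ alone suffices to give an $n$-independent estimate, whereas the Kipnis-Varadhan / spectral-gap approach used in Lemmas~\ref{lem:one block}--\ref{lem:two block} would introduce an $n^{-2}\ell^2$ factor and could not produce the required $T/\ell$ bound.
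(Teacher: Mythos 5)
Your proof is correct and follows essentially the same route as the paper: reduce to the static bound $E_{\lambda_0}[(\psi_F(\ell,\bar u^\ell)-\partial_\rho\varphi_F(\rho'(\lambda_0))(\bar u^\ell-\rho'(\lambda_0)))^2]$ by Cauchy--Schwarz in time together with the fact that mean-zero translates supported on disjoint blocks are orthogonal under the product measure (your Young-inequality bookkeeping is equivalent to the paper's splitting of $\sum_k$ into residue classes mod $\ell$), and then invoke Proposition~\ref{prop:equiv-ensem}. Your explicit decomposition into the quadratic correction $Q_\ell$ and the remainder $R_\ell$, with the bounds $|\partial_{\rho\rho}\varphi_F(\rho'(\lambda_0))|^2\lesssim_{\ell_0}\sup_\lambda\operatorname{var}_\lambda(F)$ and $E_{\lambda_0}[((\bar u^\ell-\rho'(\lambda_0))^2-\sigma^2_{\lambda_0}/\ell)^2]\lesssim\ell^{-2}$, simply makes explicit what the paper compresses into the statement that the static expectation is $\lesssim \ell_0^3\ell^{-2}\sup_\lambda\operatorname{var}_\lambda(F)$.
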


\begin{proof}
  We apply twice the Cauchy-Schwarz inequality to obtain
  \begin{align*}
   &\mathbb{E}_{{\mu}_{\lambda_0}} \Big[ \Big( \int_0^T \sum_k \tau_k
   \big( \psi_F ( \ell, \overline{(u^{n_{}}_s)}^{\ell} ) -
   \partial_{\rho} \varphi_F (\rho' (\lambda_0)) (\overline{(u^n_s)}^{\ell}
   - \rho' (\lambda_0)) \big) g_s (k) \mathd s \Big)^2 \Big] \\
   &\qquad \leqslant T \int_0^T \mathbb{E}_{{\mu}_{\lambda_0}} \Big[ \Big(
   \sum_{j = 0}^{\ell - 1} \sum_i \tau_{i \ell + j} \big( \psi_F (
   \ell, \overline{(u^{n_{}}_s)}^{\ell} ) - \partial_{\rho} \varphi_F
   (\rho' (\lambda_0)) (\overline{(u^n_s)}^{\ell} - \rho' (\lambda_0))
   \big) g_s (i \ell + j) \Big)^2 \Big] \mathd s \\
   &\qquad \leqslant T \ell \int_0^T \sum_{j = 0}^{\ell - 1}
   \mathbb{E}_{{\mu}_{\lambda_0}} \Big[ \Big( \sum_i \tau_{i \ell +
   j} \big( \psi_F ( \ell, \overline{(u^{n_{}}_s)}^{\ell} ) -
   \partial_{\rho} \varphi_F (\rho' (\lambda_0)) (\overline{(u^n_s)}^{\ell}
   - \rho' (\lambda_0)) \big) v_s (i \ell + j) \Big)^2 \Big] \mathd s.
  \end{align*}
  Now obserge that $\mathbb{E}_{{\mu}_{\lambda_0}} [ \psi_F (
  \ell, \overline{(u^{n_{}}_s)}^{\ell} ) ]
  =\mathbb{E}_{{\mu}_{\lambda_0}} [\partial_{\rho} \varphi_F (\rho'
  (\lambda_0)) (\overline{(u^n_s)}^{\ell} - \rho' (\lambda_0))] = 0$, and
  therefore
  \begin{align*}
   &T \ell \int_0^T \sum_{j = 0}^{\ell - 1}
   \mathbb{E}_{{\mu}_{\lambda_0}} \Big[ \Big( \sum_i \tau_{i \ell +
   j} \big( \psi_F ( \ell, \overline{(u^{n_{}}_s)}^{\ell} ) -
   \partial_{\rho} \varphi_F (\rho' (\lambda_0)) (\overline{(u^n_s)}^{\ell}
   - \rho' (\lambda_0)) \big) g_s (i \ell + j) \Big)^2 \Big] \mathd s \\
   &\qquad = T \ell \int_0^T \sum_{j = 0}^{\ell - 1} \sum_i
   \mathbb{E}_{{\mu}_{\lambda_0}} \Big[ \Big( \tau_{i \ell + j}
   \big( \psi_F ( \ell, \overline{(u^{n_{}}_s)}^{\ell} ) -
   \partial_{\rho} \varphi_F (\rho' (\lambda_0)) (\overline{(u^n_s)}^{\ell}
   - \rho' (\lambda_0)) \big) g_s (i \ell + j) \Big)^2 \Big] \mathd s \\
   &\qquad = T \ell \int_0^T \sum_k \mathbb{E}_{{\mu}_{\lambda_0}} \Big[
   \Big( \psi_F ( \ell, \overline{(u^{n_{}}_s)}^{\ell} ) -
   \partial_{\rho} \varphi_F (\rho' (\lambda_0)) (\overline{(u^n_s)}^{\ell}
   - \rho' (\lambda_0)) \Big)^2 \Big] g^2_s (k) \mathd s \\
   &\qquad = T \ell \| g \|_{L^2 ([0, T] \times \mathbb{Z})}^2 E_{\lambda_0}
   [(\psi_F (\ell, \bar{u}^{\ell}) - \partial_{\rho} \varphi_F (\rho'
   (\lambda_0)) (\bar{u}^{\ell} - \rho' (\lambda_0)))^2],
  \end{align*}
  and by Proposition~\ref{prop:equiv-ensem} the expectation on the right
  hand side is bounded by
  \[ E_{\lambda_0} [((\psi_F (\ell, \bar{u}^{\ell}) - \partial_{\rho}
     \varphi_F (\rho' (\lambda_0)) (\bar{u}^{\ell} - \rho' (\lambda_0))))^2]
     \lesssim \frac{\ell_0^3}{\ell^2} \sup_{\lambda} \tmop{var}_{\lambda} (F),
  \]
  from where our claim follows. Actually we could even obtain $\ell_0^2$ on
  the right hand side, but we do not care about the dependence on $\ell_0$.
\end{proof}

\begin{lemma}(see also \cite[Lemma~4]{bib:goncalvesJara})\label{lem:quadratic-corrector}
  Let $F \in \bigcap_{\lambda \in \mathbb{R}} L^2 ({\mu}_{\lambda})$ be a
  local function that depends only on $u (0), \ldots, u (\ell_0 - 1)$ and
  assume that $\varphi_F (\rho' (\lambda_0)) = (\partial_{\rho} \varphi_F)
  (\rho' (\lambda_0)) = 0$. Define for $\ell \in \mathbb{N}$ and $\rho \in
  \mathbb{R}$
  \[ \CQ_{\lambda_0} (\ell ; u) = (\bar{u}^{\ell} - \rho' (\lambda_0))^2 -
     \frac{\sigma^2_{\lambda_0}}{\ell} . \]
  Then we have for any $g \in L^2 ([0, T] \times \mathbb{Z})$
  \begin{align*}
  & \mathbb{E}_{{\mu}_{\lambda_0}} \Big[ \Big( \int_0^T \sum_k \tau_k
     \big( \psi_F ( \ell, \overline{(u^{n_{}}_s)}^{\ell} ) -
     \frac{(\partial_{\rho \rho} \varphi_F) (\rho' (\lambda_0))}{2}
     \CQ_{\lambda_0} (\ell ; u^n_s) \big) g_s (k) \mathd s \Big)^2 \Big]
  \\
  & \hspace{50pt} \lesssim_{\ell_0} \frac{T}{\ell^2} \| g \|_{L^2 ([0, T] \times
     \mathbb{Z})}^2 \sup_{\lambda} \tmop{var}_{\lambda} (F) .
  \end{align*}
\end{lemma}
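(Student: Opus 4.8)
The plan is to follow the strategy of the analogous Lemma~4 in \cite{bib:goncalvesJara}, combining the multiscale decomposition through the renormalization step with the second order equivalence of ensembles from Proposition~\ref{prop:equiv-ensem}. The idea is that for $F$ with $\varphi_F(\rho'(\lambda_0)) = (\partial_\rho \varphi_F)(\rho'(\lambda_0)) = 0$, the conditional expectation $\psi_F(\ell_0, \cdummy)$ at scale $\ell_0$ is, at leading order, a quadratic function of the empirical mean, so it should be comparable to $\tfrac12 (\partial_{\rho\rho}\varphi_F)(\rho'(\lambda_0)) \CQ_{\lambda_0}(\ell_0; \cdummy)$. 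One passes from scale $\ell_0$ to scale $\ell$ through dyadic scales, controlling each error term, and then identifies the limit.

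First I would telescope: writing $\ell = \ell_0 2^M$ (and handling a general $\ell$ by comparing to the nearest dyadic scale, which only costs a constant), decompose
\[
  \psi_F(\ell_0, \bar u^{\ell_0}) - \tfrac{(\partial_{\rho\rho}\varphi_F)(\rho'(\lambda_0))}{2}\CQ_{\lambda_0}(\ell; u)
  = \big(\psi_F(\ell_0, \bar u^{\ell_0}) - \psi_F(\ell, \bar u^{\ell})\big) + \big(\psi_F(\ell, \bar u^{\ell}) - \tfrac{(\partial_{\rho\rho}\varphi_F)(\rho'(\lambda_0))}{2}\CQ_{\lambda_0}(\ell; u)\big).
\]
For the first bracket I would invoke the two blocks estimate (Lemma~\ref{lem:two block}) in its case~ii, noting that $\partial_\rho \varphi_F|_{\rho = h'(\lambda_0)} = 0$ under our hypothesis, which gives a contribution of order $\ell/n^2 \cdot \|g\|^2 \sup_\lambda \tmop{var}_\lambda(F)$; since we will take $\ell$ to be small compared to $n$, this is absorbed into the right hand side. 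For the second bracket, I would argue directly as in the proof of Lemma~\ref{lem:linear}: by stationarity and independence across disjoint blocks of size $\ell$, after two applications of Cauchy-Schwarz the space-time sum reduces to
\[
  T\ell \, \|g\|_{L^2([0,T]\times\mathbb{Z})}^2 \, E_{\lambda_0}\Big[\big(\psi_F(\ell, \bar u^{\ell}) - \tfrac{(\partial_{\rho\rho}\varphi_F)(\rho'(\lambda_0))}{2}\CQ_{\lambda_0}(\ell; u)\big)^2\Big],
\]
using here that both $\psi_F(\ell, \bar u^{\ell})$ and $\CQ_{\lambda_0}(\ell; u)$ have vanishing $\mu_{\lambda_0}$-expectation — for $\CQ$ this is immediate since $E_{\lambda_0}[(\bar u^\ell - \rho'(\lambda_0))^2] = \sigma_{\lambda_0}^2/\ell$, and for $\psi_F$ it follows from the hypothesis $\varphi_F(\rho'(\lambda_0)) = 0$ together with the equivalence of ensembles expansion. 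Then Proposition~\ref{prop:equiv-ensem} with $N = \ell$, using $\varphi_F(\rho'(\lambda_0)) = (\partial_\rho\varphi_F)(\rho'(\lambda_0)) = 0$, bounds that expectation by $(\ell_0/\ell)^3 \sup_\lambda \tmop{var}_\lambda(F)$, so the second-bracket contribution is of order $T\ell \cdot (\ell_0/\ell)^3 \|g\|^2 \sup_\lambda \tmop{var}_\lambda(F) = T\ell_0^3 \ell^{-2}\|g\|^2 \sup_\lambda \tmop{var}_\lambda(F)$, which is exactly the claimed bound (up to the $\ell_0$-dependence, which we do not track).

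The remaining point is to balance the two contributions and choose $\ell$. The first-bracket bound $\ell/n^2$ and the second-bracket bound $T/\ell^2$ are optimized, roughly, around $\ell \sim (Tn^2)^{1/3}$, but since the statement fixes $\ell$ as a free parameter we do not actually optimize: we simply observe that the first bracket's $\ell/n^2$ term is dominated by $T/\ell^2$ in the regime we care about, or — more cleanly — we state the bound as the sum and note that in all later applications $\ell$ is chosen as a power of $n$ making $\ell/n^2$ negligible. The main obstacle, as in \cite{bib:goncalvesJara}, is bookkeeping: keeping the various scales $\ell_0 \le \ell$ straight, verifying that the independence-across-blocks argument genuinely applies after the Cauchy-Schwarz splitting (which requires the blocks $[i\ell + j, (i+1)\ell + j)$ to be disjoint for fixed $j$), and making sure the centering (vanishing expectations) holds at each step so that only the diagonal terms survive. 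None of these is deep, but the quadratic corrector $\CQ_{\lambda_0}(\ell; u)$ introduces the one genuinely new ingredient compared to the linear case, namely that one must subtract the $\sigma_{\lambda_0}^2/\ell$ term to get mean zero and to match the second-order term in the equivalence of ensembles — this is precisely why Proposition~\ref{prop:equiv-ensem} is stated with the $\sigma_{\lambda_0}^2/N$ correction in the quadratic coefficient.
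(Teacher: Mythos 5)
Your treatment of what you call the second bracket is exactly the paper's proof: the paper proves this lemma by repeating the proof of Lemma~\ref{lem:linear} (Cauchy--Schwarz in time, splitting $\sum_k$ into $\ell$ sums over translates by disjoint blocks of length $\ell$, using that the summands are independent and centered so that cross terms vanish, and stationarity), which reduces the left-hand side to $T\ell\,\|g\|_{L^2([0,T]\times\mathbb{Z})}^2\,E_{\lambda_0}\big[\big(\psi_F(\ell,\bar{u}^{\ell})-\tfrac12(\partial_{\rho\rho}\varphi_F)(\rho'(\lambda_0))\CQ_{\lambda_0}(\ell;u)\big)^2\big]$, and then bounds this expectation by $(\ell_0/\ell)^3\sup_\lambda\tmop{var}_\lambda(F)$ via Proposition~\ref{prop:equiv-ensem} with $N=\ell$, the hypotheses $\varphi_F(\rho'(\lambda_0))=(\partial_\rho\varphi_F)(\rho'(\lambda_0))=0$ killing the zeroth and first order terms of the expansion; this gives $T\ell_0^3/\ell^2$, as claimed. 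One small correction there: the centering of $\psi_F(\ell,\bar{u}^{\ell})$ under $\mu_{\lambda_0}$ is exact by the tower property, $E_{\lambda_0}[\psi_F(\ell,\bar{u}^{\ell})]=E_{\lambda_0}[F]=\varphi_F(\rho'(\lambda_0))=0$; do not derive it from the equivalence of ensembles expansion, which only gives it up to an error and would spoil the exact vanishing of the cross terms.

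However, your first step rests on a misreading of the statement. The lemma compares $\psi_F(\ell,\overline{(u^n_s)}^{\ell})$ --- the conditional expectation at the \emph{same} scale $\ell$ as the quadratic corrector --- with $\tfrac12(\partial_{\rho\rho}\varphi_F)(\rho'(\lambda_0))\CQ_{\lambda_0}(\ell;u^n_s)$; no $\psi_F(\ell_0,\cdot)$ (nor $F$ itself) appears, so your first bracket is identically zero and the two blocks estimate, Lemma~\ref{lem:two block}, plays no role here. Passing from $F$ at scale $\ell_0$ up to scale $\ell$ is done only later, in the proof of Theorem~\ref{thm:second order BG}, exactly as in \cite{bib:goncalvesJara}. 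This is not merely cosmetic: keeping your first bracket produces an extra $\ell/n^2$ term which is absent from the claimed bound $T/\ell^2$ and cannot be absorbed into it for arbitrary $\ell\geqslant\ell_0$ (take $\ell\gg n$ with $T$ fixed), and your remark that in later applications $\ell$ is chosen so that $\ell/n^2$ is negligible does not prove the lemma as stated. Dropping the first bracket, your argument is complete and coincides with the paper's.
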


The next theorem is not contained in \cite{bib:goncalvesJara},
but it is an easy consequence of the lemmas that we established so far.

\begin{theorem}[Boltzmann-Gibbs principle]
  \label{thm:first order BG}
  Let $F \in \bigcap_{\lambda \in \mathbb{R}} L^2 ({\mu}_{\lambda})$ be a
  local function that depends only on $u (0), \ldots, u (\ell_0 - 1)$ and let
  $g \in L^2 ([0, T] \times \mathbb{Z})$. Then for all $\ell \geqslant
  \ell_0$
  \begin{align*}
     &\mathbb{E}_{{\mu}_{\lambda_0}} \Big[ \Big( \int_0^T \sum_k \tau_k
     \big( F (u^n_s) - \varphi_F (\rho' (\lambda_0)) - \partial_{\rho}
     \varphi_F ( \rho' (\lambda_0) \big) (u_s^n (0) - \rho'
     (\lambda_0)) ) g_s (k) \mathd s \Big)^2 \Big] \\
     & \hspace{50pt} \lesssim_{\ell_0} \left( \frac{\ell}{n^2} + \frac{T}{\ell} \right) \| g
     \|_{L^2 ([0, T] \times \mathbb{Z})}^2 \sup_{\lambda}
     \tmop{var}_{\lambda} (F) . 
  \end{align*}
\end{theorem}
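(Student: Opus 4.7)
The plan is to reduce the theorem to a combination of the earlier estimates by subtracting from $F$ a carefully chosen centering and linearization. Set
\[
  G(u) \assign F(u) - \varphi_F(\rho'(\lambda_0)) - \partial_\rho \varphi_F(\rho'(\lambda_0))\bigl(u(0) - \rho'(\lambda_0)\bigr),
\]
so that $G$ still depends only on $u(0), \ldots, u(\ell_0-1)$ and $\tau_k G(u^n_s)$ coincides with the integrand appearing in the theorem statement. Using $E_{h'(\rho)}[u(0)] = \rho'(h'(\rho)) = \rho$, a direct computation yields
\[
  \varphi_G(\rho) = \varphi_F(\rho) - \varphi_F(\rho'(\lambda_0)) - \partial_\rho \varphi_F(\rho'(\lambda_0))\bigl(\rho - \rho'(\lambda_0)\bigr),
\]
so that $\varphi_G(\rho'(\lambda_0)) = 0$ and $\partial_\rho \varphi_G(\rho'(\lambda_0)) = 0$, precisely the Taylor conditions that unlock the favourable regimes of Lemmas~\ref{lem:two block} and~\ref{lem:linear}.

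Next I would telescope the integrand as
\[
  G(u^n_s) = \bigl[ G(u^n_s) - \psi_G\bigl(\ell_0, \overline{(u^n_s)}^{\ell_0}\bigr)\bigr] + \bigl[\psi_G\bigl(\ell_0, \overline{(u^n_s)}^{\ell_0}\bigr) - \psi_G\bigl(\ell, \overline{(u^n_s)}^{\ell}\bigr)\bigr] + \psi_G\bigl(\ell, \overline{(u^n_s)}^{\ell}\bigr),
\]
and use $(a+b+c)^2 \le 3(a^2+b^2+c^2)$ to split the expectation into three pieces. Lemma~\ref{lem:one block} applied to $G$ bounds the contribution of the first bracket by $\lesssim \frac{\ell_0^3}{n^2}\|g\|^2 \, \tmop{var}_{\lambda_0}(G)$, which since $\ell \ge \ell_0$ is $\lesssim_{\ell_0} \frac{\ell}{n^2}\|g\|^2 \sup_\lambda \tmop{var}_\lambda(G)$. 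Because $\partial_\rho \varphi_G(\rho'(\lambda_0)) = 0$, case (ii) or (iii) of the two-blocks estimate (Lemma~\ref{lem:two block}) applies to the second bracket with $\beta_\ell \le \ell$, giving $\lesssim_{\ell_0} \frac{\ell}{n^2}\|g\|^2 \sup_\lambda \tmop{var}_\lambda(G)$. Finally, $\varphi_G(\rho'(\lambda_0)) = 0$ lets us apply Lemma~\ref{lem:linear} to $G$, and the linear corrector there vanishes identically because $\partial_\rho \varphi_G(\rho'(\lambda_0)) = 0$, so the third bracket is bounded by $\lesssim_{\ell_0} \frac{T}{\ell}\|g\|^2 \sup_\lambda \tmop{var}_\lambda(G)$.

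Summing the three contributions yields the claimed estimate, modulo the verification that $\sup_\lambda \tmop{var}_\lambda(G) \lesssim_{\ell_0} \sup_\lambda \tmop{var}_\lambda(F)$. This follows by expanding the variance of $G$ and using the chain-rule identity
\[
  \partial_\rho \varphi_F(\rho) = h''(\rho)\, \tmop{Cov}_{h'(\rho)}\!\Big(F, \sum_{i=0}^{\ell_0-1} u(i)\Big)
\]
together with Cauchy--Schwarz and the uniform bounds of Lemma~\ref{lem:UB} on $\sigma_\lambda^2$ and $\sigma_\lambda^{-2}$, which give $|\partial_\rho \varphi_F(\rho'(\lambda_0))|^2 \lesssim_{\ell_0} \sup_\lambda \tmop{var}_\lambda(F)$. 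The argument is essentially bookkeeping; the real work has been done in the preceding lemmas. The only genuine content is the choice of the centering $G$, which is engineered to kill both $\varphi_G(\rho'(\lambda_0))$ and $\partial_\rho \varphi_G(\rho'(\lambda_0))$ so that the $\beta_\ell = \ell^2$ case of the two-blocks estimate is avoided and the residual linear term in Lemma~\ref{lem:linear} disappears. The only mildly delicate step is the variance comparison described above.
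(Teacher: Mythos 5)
Your proposal is correct and follows essentially the same route as the paper: the paper defines the same centered function $\hat{F}$ (your $G$), splits via the one-block estimate, the two-blocks estimate (using $\partial_\rho\varphi_{\hat F}(\rho'(\lambda_0))=0$ to avoid the $\beta_\ell=\ell^2$ case), and Lemma~\ref{lem:linear} with vanishing linear corrector, and then closes with exactly the variance comparison $\mathrm{var}_\lambda(\hat F)\lesssim \mathrm{var}_\lambda(F)+(\partial_\rho\varphi_F(\rho'(\lambda_0)))^2\sigma_\lambda^2$ together with the covariance identity for $\partial_\rho\varphi_F$, Cauchy--Schwarz, and Lemma~\ref{lem:UB}. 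No gaps; this matches the paper's argument.
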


\begin{proof}
  {\color{black} Let $\hat{F} (u) \assign F (u) - \varphi_F (\rho'
  (\lambda_0)) - \partial_{\rho} \varphi_F (\rho' (\lambda_0)) (u (0) - \rho'
  (\lambda_0))$, so that for any $\ell \in \mathbb{N}$
  \begin{align*}
   &\mathbb{E}_{{\mu}_{\lambda_0}} \left[ \left( \int_0^T \sum_k \tau_k
   \left( F (u^n_s) - \varphi_F (\rho' (\lambda_0)) - \partial_{\rho}
   \varphi_F \left( \rho' (\lambda_0) \right) (u_s^n (0) - \rho'
   (\lambda_0)) \right) g_s (k) \mathd s \right)^2 \right] \\
   &\qquad=\mathbb{E}_{{\mu}_{\lambda_0}} \left[ \left( \int_0^T \sum_k \tau_k
   \hat{F} (u^n_s) g_s (k) \mathd s \right)^2 \right] \\
   &\qquad=\mathbb{E}_{{\mu}_{\lambda_0}} \left[ \left( \int_0^T \sum_k \tau_k
   (\hat{F} (u^n_s) - \partial_{\rho} \varphi_{\hat{F}} (\rho' (\lambda_0))
   (\overline{(u_s^n)}^{\ell} - \rho' (\lambda_0))) g_s (k) \mathd s
   \right)^2 \right].
  \end{align*}
  Now we combine Lemma~\ref{lem:one block} with Lemma~\ref{lem:two block} and
  Lemma~\ref{lem:linear} to get for $\ell \geqslant \ell_0$
  \begin{align*}
   &\mathbb{E}_{{\mu}_{\lambda_0}} \left[ \left( \int_0^T \sum_k \tau_k
   (\hat{F} (u^n_s) - \partial_{\rho} \varphi_{\hat{F}} (\rho' (\lambda_0))
   (\overline{(u_s^n)}^{\ell} - \rho' (\lambda_0))) g_s (k) \mathd s
   \right)^2 \right] \\
   &\quad
   \lesssim \mathbb{E}_{{\mu}_{\lambda_0}} \left[ \left( \int_0^T
   \sum_k \tau_k (\hat{F} (u^n_s) - \psi_{\hat{F}} (\ell_0,
   \overline{(u^n_s)}^{\ell_0})) g_s (k) \mathd s \right)^2 \right] \\
   &\qquad
   +\mathbb{E}_{{\mu}_{\lambda_0}} \left[ \left( \int_0^T \sum_k \tau_k
   \big( \psi_{\hat{F}} (\ell_0, \overline{(u^n_s)}^{\ell_0}) -
   \psi_{\hat{F}} ( \ell, \overline{(u^{n_{}}_s)}^{\ell} )
   \big) g_s (k) \mathd s \right)^2 \right] \\
   &\qquad
   +\mathbb{E}_{{\mu}_{\lambda_0}} \left[ \left( \int_0^T \sum_k \tau_k
   \big( \psi_{\hat{F}} ( \ell, \overline{(u^{n_{}}_s)}^{\ell} )
   - \partial_{\rho} \varphi_{\hat{F}} (\rho' (\lambda_0))
   (\overline{(u_s^n)}^{\ell} - \rho' (\lambda_0)) \big) g_s (k) \mathd s
   \right)^2 \right] \\
   &\quad
   \lesssim_{\ell_0} \frac{1}{n^2} \| g \|_{L^2 ([0, T] \times
   \mathbb{Z})}^2 \tmop{var}_{\lambda_0} (\hat{F}) + \frac{\ell}{n^2} \| g
   \|_{L^2 ([0, T] \times \mathbb{Z})}^2 \sup_{\lambda}
   \tmop{var}_{\lambda} (\hat{F}) + \frac{T}{\ell} \| g \|_{L^2 ([0, T]
   \times \mathbb{Z})}^2 \sup_{\lambda} \tmop{var}_{\lambda} (\hat{F}).
  \end{align*}
  It now suffices to note that
  \[ \tmop{var}_{\lambda} (\hat{F}) \lesssim \tmop{var}_{\lambda} (F) +
     \tmop{var}_{\lambda} (\partial_{\rho} \varphi_F (\rho' (\lambda_0)) (u
     (0) - \rho' (\lambda_0))) = \tmop{var}_{\lambda} (F) + (\partial_{\rho}
     \varphi_F (\rho' (\lambda_0)))^2 \sigma_{\lambda}^2 \]
  and finally
  \[ | \partial_{\rho} \varphi_F (\rho' (\lambda_0)) | = | E_{\lambda_0}
     [(u^{\ell_0} - \ell_0 \rho' (\lambda_0)) F] h'' (\rho' (\lambda_0)) | =
     \sigma_{\lambda_0}^{- 2} | E_{\lambda} [(u^{\ell_0} - \ell_0 \rho'
     (\lambda_0)) F] | \]
  \[ \lesssim_{\ell_0} \sigma_{\lambda_0}^{- 2} (\tmop{var}_{\lambda_0}
     (F))^{1 / 2} \sigma_{\lambda_0}^2, \]
     from where an application of Lemma \ref{lem:UB} proves
  our claim.}
\end{proof}

\begin{theorem}[Second order Boltzmann-Gibbs principle, see also~{\cite[Theorem~7]{bib:goncalvesJara}}]
  \label{thm:second order BG}  
  Let $F$ be a local function that depends only on $u (0), \ldots, u (\ell_0 -
  1)$ and assume that $\varphi_F (\rho' (\lambda_0)) = (\partial_{\rho}
  \varphi_F) (\rho' (\lambda_0)) = 0$. Let $g \in L^2 ([0, T] \times
  \mathbb{Z})$. Then, for any $\ell \geqslant \ell_0$,
\end{theorem}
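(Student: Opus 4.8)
The plan is to prove the stated estimate exactly as Gon\c calves and Jara prove their second order Boltzmann--Gibbs principle~\cite[Theorem~7]{bib:goncalvesJara}: by telescoping the integrand through a chain of local box-averages and controlling each link with one of the three lemmas of this subsection. Recall that $F$ is assumed to satisfy $\varphi_F(\rho'(\lambda_0)) = (\partial_\rho\varphi_F)(\rho'(\lambda_0)) = 0$ and to lie in $\bigcap_{\lambda\in\R} L^2(\mu_\lambda)$, so that all conditional expectations $\psi_F(\ell,\cdummy)$ below are well defined. First I would write, for every $\ell \geqslant \ell_0$,
\begin{align*}
F(u^n_s) - \frac{(\partial_{\rho\rho}\varphi_F)(\rho'(\lambda_0))}{2}\,\CQ_{\lambda_0}(\ell;u^n_s)
&= \big(F(u^n_s) - \psi_F(\ell_0,\overline{(u^n_s)}^{\ell_0})\big) \\
&\quad{}+ \big(\psi_F(\ell_0,\overline{(u^n_s)}^{\ell_0}) - \psi_F(\ell,\overline{(u^n_s)}^{\ell})\big) \\
&\quad{}+ \big(\psi_F(\ell,\overline{(u^n_s)}^{\ell}) - \tfrac{(\partial_{\rho\rho}\varphi_F)(\rho'(\lambda_0))}{2}\,\CQ_{\lambda_0}(\ell;u^n_s)\big),
\end{align*}
insert this into $\int_0^T\sum_k\tau_k(\cdummy)g_s(k)\,\mathd s$, and use $(a+b+c)^2\leqslant 3(a^2+b^2+c^2)$ to reduce the claim to estimating the square of each of the three resulting space-time integrals separately.

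The first term is bounded directly by the one block estimate (Lemma~\ref{lem:one block}), which contributes at most a constant times $\ell_0^3 n^{-2}\|g\|_{L^2([0,T]\times\Z)}^2\,\tmop{var}_{\lambda_0}(F) \lesssim_{\ell_0} \ell n^{-2}\|g\|_{L^2([0,T]\times\Z)}^2\sup_\lambda\tmop{var}_\lambda(F)$, using $\ell\geqslant\ell_0\geqslant1$. The second term is exactly the object controlled by the two blocks estimate (Lemma~\ref{lem:two block}); since by hypothesis $\partial_\rho\varphi_F$ vanishes at the base point $\rho'(\lambda_0)$ (while $\partial_{\rho\rho}\varphi_F$ need not), we are in case (ii) of that lemma, i.e.\ $\beta_\ell = \ell$, so this term contributes at most $\ell n^{-2}\|g\|_{L^2([0,T]\times\Z)}^2\sup_\lambda\tmop{var}_\lambda(F)$. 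The third term is precisely the content of the quadratic corrector lemma (Lemma~\ref{lem:quadratic-corrector}), whose hypotheses $\varphi_F(\rho'(\lambda_0)) = (\partial_\rho\varphi_F)(\rho'(\lambda_0)) = 0$ coincide with the ones assumed here, so it contributes at most $T\ell^{-2}\|g\|_{L^2([0,T]\times\Z)}^2\sup_\lambda\tmop{var}_\lambda(F)$. Adding the three bounds and absorbing the $\ell_0$-dependent constants gives
\begin{align*}
&\mathbb{E}_{\mu_{\lambda_0}}\Big[\Big(\int_0^T\sum_k\tau_k\big(F(u^n_s) - \tfrac{(\partial_{\rho\rho}\varphi_F)(\rho'(\lambda_0))}{2}\,\CQ_{\lambda_0}(\ell;u^n_s)\big)g_s(k)\,\mathd s\Big)^2\Big] \\
&\hspace{40pt}\lesssim_{\ell_0}\Big(\frac{\ell}{n^2}+\frac{T}{\ell^2}\Big)\|g\|_{L^2([0,T]\times\Z)}^2\sup_\lambda\tmop{var}_\lambda(F),
\end{align*}
which is the asserted estimate.

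Since the three input lemmas are taken over essentially verbatim from~\cite{bib:goncalvesJara} --- their proofs use only Corollary~\ref{cor:GJ corollary 1} and Proposition~\ref{prop:equiv-ensem}, as noted at the start of this subsection --- the present statement contains no genuinely new difficulty: the only points requiring care are checking that the telescoping decomposition is legitimate (this is where the standing assumption $F\in\bigcap_\lambda L^2(\mu_\lambda)$ enters, ensuring the conditional expectations and the exchangeability/martingale identities used in the lemmas make sense) and verifying that the vanishing of the first derivative of $\varphi_F$ places us in case (ii) rather than case (i) of the two blocks estimate. The main --- essentially the only --- subtlety is thus this case distinction; everything else is the triangle inequality. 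Finally, for the application in Section~\ref{sec:convergence} I would record that the theorem will be used with $F(u) = V'(u(0)) - \varphi_{V'}(\rho'(\lambda_0)) - (\partial_\rho\varphi_{V'})(\rho'(\lambda_0))(u(0) - \rho'(\lambda_0))$, which lies in $\bigcap_\lambda L^2(\mu_\lambda)$ because $V'$ is Lipschitz under \assumptionV, which satisfies the two vanishing hypotheses by construction, and for which $\partial_{\rho\rho}\varphi_F(\rho'(\lambda_0)) = \partial_{\rho\rho}\varphi_{V'}(\rho'(\lambda_0)) = -m_{3,\lambda_0}\sigma_{\lambda_0}^{-6}$ by Lemma~\ref{lem:constants}; this is what produces the quadratic nonlinearity in~\eqref{eq:SBE}.
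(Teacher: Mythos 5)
Your proof is correct and follows exactly the route the paper intends: the paper only remarks that the second order principle is proved like Theorem~\ref{thm:first order BG} but more easily, i.e.\ by telescoping through $\psi_F(\ell_0,\cdummy)$ and $\psi_F(\ell,\cdummy)$ and invoking the one block estimate, case (ii) of the two blocks estimate (since $\partial_\rho\varphi_F$ vanishes at $\rho'(\lambda_0)$ but $\partial_{\rho\rho}\varphi_F$ need not), and Lemma~\ref{lem:quadratic-corrector} in place of Lemma~\ref{lem:linear}. Your decomposition, case selection, and bookkeeping of the bounds $\ell/n^2$ and $T/\ell^2$ match this precisely.
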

\begin{align*}
   &\mathbb{E}_{{\mu}_{\lambda_0}} \left[ \left( \int_0^T \sum_k \tau_k
   \left( F (u^n_s) - \frac{(\partial_{\rho \rho} \varphi_F) (\rho'
   (\lambda_0))}{2} \CQ_{\lambda_0} (\ell ; u^n_s) \right) g_s (k) \mathd s
   \right)^2 \right] \\
   & \hspace{50pt} \lesssim_{\ell_0} \left( \frac{\ell}{n^2} + \frac{T}{\ell^2} \right) \| g
   \|_{L^2 ([0, T] \times \mathbb{Z})}^2 \sup_{\lambda} \tmop{var}_{\lambda}
   (F) .
\end{align*}
The proof is similar to the one of Theorem~\ref{thm:first order BG} but
easier, because here we subtract a quadratic term that depends on the local
average $\overline{(u^n_s)}^{\ell}$, whereas in Theorem~\ref{thm:first order
BG} we still had to replace the linear term $\partial_{\rho} \varphi_F (\rho'
(\lambda)) (\overline{(u^n_s)}^{\ell} - \rho' (\lambda))$ of
Lemma~\ref{lem:linear} by $\partial_{\rho} \varphi_F (\rho' (\lambda)) (u_s^n
(0) - \rho' (\lambda))$.

\section{Proof of convergence}\label{sec:convergence}

We now have all the tools needed to prove that the rescaled fluctuations of $u$ converge to the solution of the stochastic Burgers equation. We start by showing tightness, which as usual with the martingale problem approach is the hardest part.

\subsection{Tightness}\label{sec:tightness}

Recall that we defined
\[ v^n_t = \sum_k n^{1 / 2} (u_{n^2 t} (k) - \rho' (\lambda_0)) n^{- 1}
   \delta_{n^{- 1} k + c_n t} \]
with $c_n =  n^{1 / 2} \partial_{\rho} \varphi_{V'} (\rho' (\lambda_0))$.
Thus, for any test function $\eta \in \CS$
\[ v^n_t (\eta) = \sum_k n^{- 1 / 2} (u_{n^2 t} (k) - \rho' (\lambda_0)) \eta
   (n^{- 1} k + c_n t) = \sum_k n^{- 1 / 2} (u^n_t (k) - \rho' (\lambda_0))
   \eta (n^{- 1} k + c_n t), \]
where we recall that $u^n_t = u_{n^2 t}$, and then
\begin{align*}
 \mathd v^n_t (\eta)
 &= \sum_k n^{- 1 / 2} \eta (n^{- 1} k + c_n t) \mathd
 u^n_t (k) + \sum_k n^{- 1 / 2} (u^n_t (k) - \rho' (\lambda_0)) \mathd \eta
 (n^{- 1} k + c_n t) \\
 &= \sum_k n^{- 1 / 2} \eta (n^{- 1} k + c_n t) \left( \CL^{(n)} u^n_t (k)
 \mathd t + n^{- 1 / 2} \mathd \nabla_n^{(1)} W^n_t (k) \right) + c_n v^n_t
 (\nabla \eta) \mathd t,
\end{align*}
with $\mathd [W^n (i), W^n (j)]_t = n \delta_{i, j} \mathd t$. We can further
decompose the drift term into two parts,
\begin{align*}
 &\sum_k n^{- 1 / 2} \eta (n^{- 1} k + c_n t) \CL^{(n)} u^n_t (k) \mathd t +
 c_n v^n_t (\nabla \eta) \mathd t \\
 &\quad= \sum_k n^{- 1 / 2} \eta (n^{- 1} k + c_n t) \CL^{(n)}_S u^n_t (k) \mathd
 t
 + \left( \sum_k n^{- 1 / 2} \eta (n^{- 1} k + c_n t) \CL^{(n)}_A u^n_t (k)
 + c_n v^n_t (\nabla \eta) \right) \mathd t \\
 &\quad= : \mathd S^n_t (\eta) + \mathd A^n_t (\eta).
\end{align*}
So overall we obtain a decomposition into symmetric part, asymmetric part, and
martingale part,
\[ v^n_t (\eta) - v^n_0 (\eta) = S^n_t (\eta) + A^n_t (\eta) + M^n_t (\eta),
\]
with
\[ M^n_t (\eta) : = \int_0^t \sum_k n^{- 1} \eta (n^{- 1} k + c_n r) \mathd
   \nabla^{(1)}_n W^n_r (k) . \]
We will show the joint tightness of all the different contributions, which
will then make it easy to identify the limit of $v^n$.

\begin{lemma}
  \label{lem:tightness}The family $(v^n_0, S^n, A^n, M^n)_{n \in \mathbb{N}}$
  is tight in $\CS' \times C \left( [0, 1], \CS' \right)^3$, and for all
  fixed times $t \geqslant 0$ the random variables $(v^n_t)_{n \in
  \mathbb{N}}$ converges in distribution in $\CS'$ to a spatial white noise
  with variance $\sigma_{\lambda_0}^2$. In particular, $(v^n)_{n \in
    \mathbb{N}}$ is tight in $C \left( [0, 1], \CS' \right)$.
\end{lemma}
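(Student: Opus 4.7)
The plan is to apply Mitoma's criterion, which reduces tightness of $v^n$ in $C([0,1], \CS')$ to tightness of the real-valued process $v^n(\eta)$ in $C([0,1], \R)$ for every $\eta \in \CS$. Using the decomposition $v^n_t(\eta) = v^n_0(\eta) + S^n_t(\eta) + A^n_t(\eta) + M^n_t(\eta)$ I would address each piece separately and then combine.

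For the initial datum, $v^n_0(\eta) = \sum_k n^{-1/2}(u_0(k) - \rho'(\lambda_0))\eta(n^{-1}k)$ is a sum of independent centred random variables whose variances add up to $\sigma_{\lambda_0}^2 \sum_k n^{-1}\eta(n^{-1}k)^2 \to \sigma_{\lambda_0}^2 \|\eta\|_{L^2}^2$, so the classical CLT gives $v^n_0(\eta) \to \mathcal N(0, \sigma_{\lambda_0}^2 \|\eta\|_{L^2}^2)$ in distribution, and jointly in $\eta$ this identifies the limit as the spatial white noise with variance $\sigma_{\lambda_0}^2$. Stationarity (Corollary~\ref{cor:reversedEvolution}) and translation invariance of spatial white noise yield the same conclusion for $v^n_t$ at any fixed $t \geq 0$; in particular $(v^n_0)$ is tight in $\CS'$.

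For the martingale $M^n(\eta)$, a summation by parts combined with $\mathd[W^n(i), W^n(j)]_s = n \delta_{ij}\, \mathd s$ yields the uniform bound $[M^n(\eta)]_t \lesssim t \|\nabla\eta\|_{L^2}^2$, and Burkholder-Davis-Gundy together with Kolmogorov's criterion produce tightness in $C([0,1], \R)$. For the symmetric piece, set $H_r(u) = \sum_k n^{-1/2}\eta(n^{-1}k + c_n r)(u(k) - \rho'(\lambda_0))$ so that $S^n_t(\eta) = \int_0^t \CL^{(n)}_S H_r(u^n_r)\, \mathd r$, with $\|H_r\|_1^2 \lesssim n^{-2}\|\nabla\eta\|_{L^2}^2$. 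The It\^o trick then gives
\[
   \E_{\mu_{\lambda_0}}\Big[\sup_{r \in [s,t]} |S^n_r(\eta) - S^n_s(\eta)|^2\Big] \lesssim (t-s)\|\nabla\eta\|_{L^2}^2,
\]
and Kolmogorov again supplies tightness.

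The main obstacle is the asymmetric part $A^n_t(\eta)$. A summation by parts on $\nabla^{(2)}_n$ rewrites its integrand as $-\sum_k \nabla\eta(n^{-1}k + c_n r) V'(u^n_r(k))$ up to discrete-continuum errors controllable by the estimates above. Expanding $V'(u) - \varphi_{V'}(\rho'(\lambda_0)) = \partial_\rho\varphi_{V'}(\rho'(\lambda_0))(u(0) - \rho'(\lambda_0)) + F(u)$ with $F(u) = V'(u(0)) - \varphi_{V'}(\rho'(\lambda_0)) - \partial_\rho\varphi_{V'}(\rho'(\lambda_0))(u(0) - \rho'(\lambda_0))$: the constant part vanishes after summation by parts, while the linear part, once weighted by the prefactor $n^{1/2}$ from the asymmetric generator, produces exactly $-c_n \int_0^t v^n_r(\nabla\eta)\, \mathd r$ by the identity $\sigma_{\lambda_0}^{-2} = \partial_\rho\varphi_{V'}(\rho'(\lambda_0))$ (Lemma~\ref{lem:constants}) and the choice $c_n = n^{1/2}\sigma_{\lambda_0}^{-2}$; this cancels the moving-frame term inside $A^n$. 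The remainder $F$ satisfies $\varphi_F(\rho'(\lambda_0)) = \partial_\rho\varphi_F(\rho'(\lambda_0)) = 0$, so Theorem~\ref{thm:second order BG} with $g_r(k) = \nabla\eta(n^{-1}k + c_n r)$ bounds the $L^2$ norm of $\int_s^t \sum_k \tau_k F(u^n_r) g_r(k)\, \mathd r$, after subtracting the quadratic corrector $\tfrac{1}{2}\partial_{\rho\rho}\varphi_F(\rho'(\lambda_0))\int_s^t \sum_k \CQ_{\lambda_0}(\ell; \tau_k u^n_r) g_r(k)\, \mathd r$, by $(\ell/n^2 + (t-s)/\ell^2)\, n(t-s) \|\nabla\eta\|_{L^2}^2$; the quadratic corrector is then controlled directly by Cauchy-Schwarz, exploiting that $\tau_k \CQ_{\lambda_0}$ and $\tau_{k'}\CQ_{\lambda_0}$ are uncorrelated under $\mu_{\lambda_0}$ whenever $|k - k'| \geq \ell$, giving a bound of order $(t-s)^2 n \|\nabla\eta\|_{L^2}^2/\ell$. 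Choosing $\ell \sim (t-s)n$ renders both contributions of order $(t-s)\|\nabla\eta\|_{L^2}^2$ uniformly in $n$, and Aldous's criterion (together with continuity of $A^n$ in $t$) concludes. The principal difficulty is exactly this simultaneous exact cancellation of the diverging linear term by $c_n \nabla v^n$ and control of the residual quadratic dynamics via the second-order Boltzmann-Gibbs principle.
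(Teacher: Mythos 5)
Your skeleton matches the paper's (Mitoma's criterion plus the decomposition $v^n=v^n_0+S^n+A^n+M^n$, a CLT at fixed times, BDG--Kolmogorov for $M^n$, a Boltzmann--Gibbs argument for $A^n$), but the increment bounds you actually obtain for the two drift terms are too weak to give tightness in $C([0,1],\CS')$. For the symmetric part, the $L^2$ It\^o trick indeed yields $\E_{\mu_{\lambda_0}}[\sup_{r\in[s,t]}|S^n_r(\eta)-S^n_s(\eta)|^2]\lesssim (t-s)\|\nabla\eta\|_{H^1}^2$, but an increment bound of order $|t-s|^{1}$ on a second moment does \emph{not} satisfy the Kolmogorov--Chentsov criterion, which needs $|t-s|^{1+\beta}$ with $\beta>0$; ``Kolmogorov again supplies tightness'' is therefore unjustified. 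The paper avoids the It\^o trick here altogether: after summation by parts, $\mathd S^n_t(\eta)=\tfrac12\sum_k n^{-1/2}\Delta_n\eta(n^{-1}k+c_nt)\,(V'(u^n_t(k))-\varphi_{V'}(\rho'(\lambda_0)))\,\mathd t$ has, by stationarity and BDG/Minkowski over the spatial sum, moments of every order bounded uniformly in $n$ and $t$, which gives $\E[|S^n_t(\eta)-S^n_s(\eta)|^{2p}]\lesssim|t-s|^{2p}\|\eta\|_{H^3}^{2p}$ --- the term is a genuine drift with $O(|t-s|)$ increments pathwise, so no dynamical cancellation is needed. (Your route could be repaired by the $L^p$ version of the It\^o trick, $p>2$, which the paper only mentions in a remark, but as written the step fails.)

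The asymmetric part has the same problem, and here it is more serious. Your computations (second-order Boltzmann--Gibbs error $(\ell/n^2+(t-s)/\ell^2)\,n(t-s)$, decorrelation bound $(t-s)^2n/\ell$ for the quadratic corrector) are correct, but the choice $\ell\sim(t-s)n$ only produces an $O(|t-s|)$ bound on $\E[|A^n_t(\eta)-A^n_s(\eta)|^2]$, which again fails Kolmogorov; and the appeal to Aldous's criterion does not follow, since Aldous requires increments at \emph{stopping} times, while your bound is at deterministic times and the fixed-time moments of the integrand of $A^n$ diverge with $n$, so there is no direct transfer. The paper instead chooses $\ell\sim n|t-s|^{1/2}$ (and treats $|t-s|<n^{-2}$ by a crude Cauchy--Schwarz estimate where the optimization in $\ell$ is unavailable), obtaining $\E[|A^n_t(\eta)-A^n_s(\eta)|^2]\lesssim|t-s|^{3/2}\|\eta\|_{H^2}^2$, which is exactly what Kolmogorov--Chentsov needs; note that with this choice your corrector bound $(t-s)^2n/\ell$ also becomes $|t-s|^{3/2}$, so your strategy works after fixing the exponent (the paper in fact only needs the first-order Boltzmann--Gibbs principle at this stage, keeping the second-order one for the identification of the limit, and the $3/2$ exponent is reused later to show that the limiting drift has zero quadratic variation). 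Two minor points: the cancellation of $c_n v^n(\nabla\eta)$ happens against the \emph{discrete} gradient $\nabla^{(2)}_n\eta$, so the replacement error $c_n v^n(\nabla\eta-\nabla^{(2)}_n\eta)$ must be (and in the paper is) estimated separately; and the fixed-time CLT requires a triangular-array (Lyapunov) argument with a tail estimate for the infinite, $c_nt$-shifted sum, which the paper carries out and you only sketch.
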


Before we get to the proof, let us link the constants $\partial_\rho \varphi_{V'}(\rho)$ that appear in the formulation of the second order Boltzmann-Gibbs principle to the centered moments of $u(t,i)$ under $\mu_{\lambda_0}$.

\begin{lemma}\label{lem:constants}
   We have $\varphi_{V'}(\rho) = h'(\rho)$, and therefore in particular
   \[
      \partial_{\rho} \varphi_{V'}(\rho) = \sigma_{h'(\rho)}^{-2}, \qquad \partial_{\rho\rho} \varphi_{V'}(\rho) = -\frac{m_{3,h'(\rho)}}{\sigma_{h'(\rho)}^6}.
   \]
\end{lemma}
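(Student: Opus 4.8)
The plan is to compute $\varphi_{V'}(\rho)$ directly from the definition and then differentiate. Recall $\varphi_{V'}(\rho) = \int_\R V'(u) p_{h'(\rho)}(u)\,\dd u = E_{h'(\rho)}[V'(u(0))]$, where $h'$ is the inverse of $\rho'$. The key observation, already anticipated in the remark following Proposition~\ref{prop:equiv-ensem}, is that for any $\lambda \in \R$ one has $E_\lambda[V'(u(0))] = \lambda$. First I would reproduce that integration-by-parts identity: since $p_\lambda(u) = Z_\lambda^{-1}\exp(\lambda u - V(u))$ we have $\partial_u p_\lambda(u) = (\lambda - V'(u))p_\lambda(u)$, hence
\[
   E_\lambda[V'(u(0))] = \int_\R V'(u) p_\lambda(u)\,\dd u = \lambda \int_\R p_\lambda(u)\,\dd u - \int_\R \partial_u p_\lambda(u)\,\dd u = \lambda,
\]
using that $p_\lambda$ is a probability density decaying at infinity (which follows from \assumptionV, since $V$ grows at least quadratically up to a bounded perturbation). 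Substituting $\lambda = h'(\rho)$ gives $\varphi_{V'}(\rho) = E_{h'(\rho)}[V'(u(0))] = h'(\rho)$, which is the first claim.

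Next I would differentiate the identity $\varphi_{V'}(\rho) = h'(\rho)$ in $\rho$. Since $h$ is the Legendre transform of $\rho$ and $\rho'(h'(\rho)) = \rho$, differentiating the latter gives $\rho''(h'(\rho))\, h''(\rho) = 1$, i.e. $h''(\rho) = 1/\rho''(h'(\rho)) = 1/\sigma^2_{h'(\rho)}$ by the notation recalled in the introduction ($\rho'' = \sigma^2_\lambda$). Therefore
\[
   \partial_\rho \varphi_{V'}(\rho) = h''(\rho) = \sigma_{h'(\rho)}^{-2}.
\]
For the second derivative, differentiate $h''(\rho) = (\rho''(h'(\rho)))^{-1}$ once more: $\partial_\rho \varphi_{V'}(\rho) = h'''(\rho) = -\rho'''(h'(\rho))\, h''(\rho) \,(\rho''(h'(\rho)))^{-2} = -\rho'''(h'(\rho))\, \sigma_{h'(\rho)}^{-6}$. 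It remains to identify $\rho'''(\lambda)$ with $m_{3,\lambda}$. From $\rho'(\lambda) = E_\lambda[u(0)]$ and the exponential-family structure, $\rho'''(\lambda)$ is the third cumulant of $u(0)$ under $\mu_\lambda$, which equals the third centered moment $m_{3,\lambda}$; concretely one checks $\rho''(\lambda) = \mathrm{var}_\lambda(u(0)) = m_{2,\lambda}$ and $\rho'''(\lambda) = \partial_\lambda E_\lambda[(u(0)-\rho'(\lambda))^2] = E_\lambda[(u(0)-\rho'(\lambda))^3] = m_{3,\lambda}$, differentiating under the integral sign and using $\partial_\lambda p_\lambda = (u - \rho'(\lambda))p_\lambda$. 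Hence $\partial_{\rho\rho}\varphi_{V'}(\rho) = -m_{3,h'(\rho)}/\sigma_{h'(\rho)}^6$, as claimed.

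I do not expect any real obstacle here: the proof is a short computation combining one integration by parts with the standard differentiation rules for the Legendre transform and the cumulant-generating-function structure of $\rho$. The only points requiring minor care are justifying the differentiation under the integral sign and the vanishing of boundary terms in the integration by parts, both of which are immediate from \assumptionV (uniform convexity up to a bounded perturbation forces $p_\lambda$ and all its moments to be well-behaved, uniformly in $\lambda$ on compacts, cf.\ Lemma~\ref{lem:UB}). One should also record that, specialized to $\rho = \rho'(\lambda_0)$ so that $h'(\rho'(\lambda_0)) = \lambda_0$, this yields $\partial_\rho \varphi_{V'}(\rho'(\lambda_0)) = \sigma_{\lambda_0}^{-2}$ and $\partial_{\rho\rho}\varphi_{V'}(\rho'(\lambda_0)) = -m_{3,\lambda_0}/\sigma_{\lambda_0}^6$, which are exactly the constants appearing in the stochastic Burgers equation~\eqref{eq:SBE} and justify the identification $c_n = n^{1/2}\partial_\rho\varphi_{V'}(\rho'(\lambda_0)) = n^{1/2}\sigma_{\lambda_0}^{-2}$ used in Theorem~\ref{thm:main result}.
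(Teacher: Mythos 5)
Your proof is correct and follows essentially the same route as the paper: the identity $\varphi_{V'}(\rho)=h'(\rho)$ is obtained by the same integration by parts, and the derivative formulas follow from the standard relations $h''(\rho)=1/\rho''(h'(\rho))$ and $h'''(\rho)=-m_{3,h'(\rho)}/\sigma_{h'(\rho)}^6$, which the paper records in the proof of Corollary~\ref{cor:equivalence of ensembles}. No gaps.
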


\begin{proof}
   It suffices to integrate by parts:
   \begin{align*}
      \varphi_{V'}(\rho) & = \int_\R V'(u) p_{h'(\rho)}(u) \dd u = \int_\R ( V'(u) - h'(\rho)) p_{h'(\rho)}(u) \dd u + h'(\rho) \\
      & = - \int_\R \partial_u p_{h'(\rho)}(u) \dd u + h'(\rho) = h'(\rho).
   \end{align*}
\end{proof}

\subsubsection{Convergence at fixed times}\label{sec:fixedTimes}

First we have to check that $v^n_0$ is tight in $\CS'$ which turns out to be
quite simple. Indeed, we even show that for all fixed times $v^n_t$ is tight.
By Mitoma's criterion \cite{bib:mitoma} (which also holds for
$\CS'$-valued processes; a form in which we shall use it later on) it is
enough to verify that $v^n_t (\eta)$ is tight, for every $\eta \in \CS .$ We
show that $v^n_t (\eta)$ converges in law to $\sigma_{\lambda_0} \xi
(\eta)$ for all $t \geqslant 0$, where $\xi$ is a spatial white noise. Start by noting that
\[ v^n_t (\eta) = \sum_k n^{- 1 / 2} (u^n_t (k) - \rho' (\lambda_0)) \eta
   (n^{- 1} k + c_n t) . \]
We split this sum into two parts, depending on $n$. One part is shown to go to
zero and the other one is shown to converge to $\sigma_{\lambda_0} \xi (\eta)$
using the Lyapunov criterion for the central limit theorem. We recall:

\begin{lemma}[{\cite[Theorem 27.3]{bib:billingsley}}]
  For each $n$ assume we have a sequence of independent random variables $X_{n
  1}, \ldots, X_{n r_n}$ with zero mean and $\sigma^2_{n k} \assign
  \mathbb{E} [X^2_{n k}] < \infty$. Let $s_n^2 \assign \sum_{k = 1}^{r_n}
  \sigma^2_{n k}$ and assume there is $\delta > 0$ such that
  \[ \lim_{n \rightarrow \infty} \frac{1}{s_n^{2 + \delta}} \sum_{k = 1}^{r_n}
     \mathbb{E} [| X^{}_{n k} |^{2 + \delta}] = 0. \]
  Then $\frac{1}{s_n} \sum_{k = 1}^{r_n} X_{n k}$ converges in law to a
  standard normal distribution.
\end{lemma}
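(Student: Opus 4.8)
The statement is a standard form of the Lyapunov central limit theorem, so I will only sketch the classical characteristic-function argument. Without loss of generality I would replace $X_{nk}$ by $X_{nk}/s_n$, so that we may assume $s_n = 1$, i.e.\ $\sum_{k=1}^{r_n}\sigma_{nk}^2 = 1$ for every $n$, and the Lyapunov hypothesis becomes $\Lambda_n \assign \sum_{k=1}^{r_n}\mathbb{E}[|X_{nk}|^{2+\delta}] \to 0$. By the L\'evy continuity theorem it then suffices to prove that the characteristic function $\varphi_n(t) \assign \prod_{k=1}^{r_n}\varphi_{nk}(t)$, with $\varphi_{nk}(t) \assign \mathbb{E}[e^{\mathrm{i}tX_{nk}}]$, converges to $e^{-t^2/2}$ for every fixed $t \in \mathbb{R}$.

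First I would record the elementary inequality $|e^{\mathrm{i}x} - 1 - \mathrm{i}x + x^2/2| \le \min\{x^2,|x|^3\} \le |x|^{2+\delta}$, valid for all $x \in \mathbb{R}$ and $\delta\in(0,1]$ (the interpolation $\min\{a^2,a^3\}\le a^{2+\delta}$ holds because the minimum is $a^3$ when $a\le 1$ and $a^2$ when $a\ge 1$). Applying this with $x = tX_{nk}$, taking expectations, and using $\mathbb{E}[X_{nk}] = 0$, I obtain $\varphi_{nk}(t) = 1 - \tfrac12 t^2\sigma_{nk}^2 + \theta_{nk}$ with $|\theta_{nk}| \le |t|^{2+\delta}\mathbb{E}[|X_{nk}|^{2+\delta}]$, hence $\sum_k|\theta_{nk}| \le |t|^{2+\delta}\Lambda_n \to 0$. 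Moreover H\"older's inequality gives $\sigma_{nk}^2 \le (\mathbb{E}[|X_{nk}|^{2+\delta}])^{2/(2+\delta)} \le \Lambda_n^{2/(2+\delta)}$, so $\max_k\sigma_{nk}^2 \to 0$; that is, the triangular array is uniformly asymptotically negligible.

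Next I would compare the product $\varphi_n(t)=\prod_k\varphi_{nk}(t)$ with $\prod_k e^{z_{nk}}$, where $z_{nk} \assign -\tfrac12 t^2\sigma_{nk}^2 \le 0$. Since $|\varphi_{nk}(t)|\le 1$ and $|e^{z_{nk}}|\le 1$, the telescoping bound $|\prod_k a_k - \prod_k b_k|\le\sum_k|a_k-b_k|$ applies, and for real $z\le0$ one has $0\le e^{z}-1-z\le z^2/2$, so $|\varphi_{nk}(t) - e^{z_{nk}}| \le |\theta_{nk}| + |1+z_{nk}-e^{z_{nk}}| \le |\theta_{nk}| + \tfrac12 z_{nk}^2$. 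Summing and using $\sum_k|z_{nk}| = \tfrac12 t^2$, I get $\sum_k|\varphi_{nk}(t)-e^{z_{nk}}| \le |t|^{2+\delta}\Lambda_n + \tfrac12(\max_k|z_{nk}|)\sum_k|z_{nk}| = |t|^{2+\delta}\Lambda_n + \tfrac{t^2}{4}\max_k|z_{nk}| \to 0$ by the two estimates of the previous paragraph. Since $\sum_k z_{nk} = -\tfrac12 t^2\sum_k\sigma_{nk}^2 = -\tfrac12 t^2$, we have $\prod_k e^{z_{nk}} = e^{-t^2/2}$, and therefore $\varphi_n(t)\to e^{-t^2/2}$ for every $t$, which is the desired convergence.

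The only genuinely delicate point is the passage from the sum $\sum_k z_{nk}$ in the exponent to the product of characteristic functions; this is exactly where the Lyapunov bound enters twice — once to make $\sum_k|\theta_{nk}|$ vanish and once, through uniform asymptotic negligibility, to make the quadratic remainder $\sum_k z_{nk}^2$ vanish. Everything else is routine bookkeeping with the Taylor estimate. As an alternative I could instead first deduce the Lindeberg condition from $\sum_k\mathbb{E}[X_{nk}^2\mathbf{1}_{\{|X_{nk}|>\varepsilon\}}] \le \varepsilon^{-\delta}\Lambda_n \to 0$ and then quote the Lindeberg--Feller theorem, but the direct argument above is self-contained and no longer than the reduction would be.
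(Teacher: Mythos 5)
The paper offers no proof of this lemma at all: it is quoted directly from Billingsley (Theorem~27.3), so there is nothing in-paper to compare against. Your characteristic-function argument is the standard self-contained proof of Lyapunov's CLT, and its bookkeeping is correct: the Taylor bound giving $\varphi_{nk}(t)=1-\tfrac12 t^2\sigma_{nk}^2+\theta_{nk}$ with $\sum_k|\theta_{nk}|\le |t|^{2+\delta}\Lambda_n$, the estimate $\max_k\sigma_{nk}^2\le\Lambda_n^{2/(2+\delta)}$ (uniform asymptotic negligibility), the telescoping comparison of $\prod_k\varphi_{nk}(t)$ with $\prod_k e^{z_{nk}}=e^{-t^2/2}$ using $|e^z-1-z|\le z^2/2$ for real $z\le 0$, and the L\'evy continuity theorem.

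The one genuine gap is the range of $\delta$. The lemma only assumes that \emph{some} $\delta>0$ works, while your pointwise inequality $\min\{x^2,|x|^3\}\le|x|^{2+\delta}$ holds only for $\delta\in(0,1]$ (for $|x|<1$ and $\delta>1$ one has $|x|^3>|x|^{2+\delta}$), and you restrict to that range without saying how $\delta>1$ is handled. The fix is one line: Lyapunov's condition for exponent $\delta$ implies it for every $\delta'\in(0,\delta]$. Indeed, with $s_n=1$, interpolation gives $\mathbb{E}[|X_{nk}|^{2+\delta'}]\le(\sigma_{nk}^2)^{(\delta-\delta')/\delta}\,(\mathbb{E}[|X_{nk}|^{2+\delta}])^{\delta'/\delta}$, and H\"older applied to the sum over $k$ yields $\sum_k\mathbb{E}[|X_{nk}|^{2+\delta'}]\le\Lambda_n^{\delta'/\delta}\to 0$; so one may always assume $\delta\le1$ before running your argument. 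Alternatively, the Lindeberg reduction you sketch at the end, $\sum_k\mathbb{E}[X_{nk}^2\mathbf{1}_{\{|X_{nk}|>\varepsilon\}}]\le\varepsilon^{-\delta}\Lambda_n$, is valid for every $\delta>0$ and closes the same gap, at the cost of quoting the Lindeberg--Feller theorem rather than remaining self-contained. With either amendment your proof is complete.
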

We apply this to $X_{n k} \assign n^{- 1 / 2} (u^n_t (k) - \rho' (\lambda_0))
\eta (n^{- 1} k + c_n t)$ and $r_n \assign 2 n^2 + 1$ (but summing from $- n^2$
to $n^2$ around $- c_n t$). Then
\begin{align*}
   s_n^2 & = n^{- 1} \sum_{| k + n c_n t | \leqslant n^2} | \eta (n^{- 1} (k + n
   c_n t)) |^2 E_{\lambda_0} [(u (k) - \rho' (\lambda_0))^2] \\
   & = n^{- 1} \sum_{| k + n c_n t | \leqslant n^2} | \eta (n^{- 1} (k + n c_n t)) |^2 \sigma^2_{\lambda_0} \xrightarrow{n \rightarrow \infty}  \int_\R |\eta (x)^2| \mathd x \sigma^2_{\lambda_0}.
\end{align*}
The Lyapunov condition is satisfied with $\delta = 1$, since $s^3_n$ is
bounded below and
\[ \sum_{| k + n c_n t | \leqslant n^2}^{} \mathbb{E} [| X^{}_{n k} |^{3}] = n^{- 3 / 2} \sum_{| k + n c_n t | \leqslant n^2} | \eta (n^{- 1}
   (k + n c_n t)) |^3 E_{\lambda_0} [(u (k) - \rho' (\lambda_0))^3] . \]
Now $n^{1 / 2}$ times the last expression converges to a constant times
$\int_{\R} |\eta (x)|^3 \mathd x$, so that the expression in fact converges to $0$
and the Lyapunov condition is satisfied.
We are left with the remainder
\begin{align*}
 &\sum_{| k + n c_n t | > n^2} n^{- 1 / 2} (u^n_t (k) - \rho' (\lambda_0))
 \eta (n^{- 1} (k + n c_n t)) \\
 &\quad \lesssim  \sum_{| k + c_n t | > n^2} n^{- 1 / 2} (u^n_t (k) - \rho'
 (\lambda_0)) \frac{1}{1 + | n^{- 1} (k + n c_n t) |},
\end{align*}
and then $\mathbb{E} [\lfloor \ldots |^2]$ is bounded by a constant times
\begin{align*}
 \frac1{n} \sum_{k : | k + n c_n t | > n^2} & \frac{1}{1 + | n^{- 1} (k + n c_n
 t) |^2}
 \lesssim \frac1{n} \sum_{\ell : | \ell | > n^2} \frac{1}{1 + | n^{- 1} \ell
 |^2} 
 \lesssim n \sum_{\ell : | \ell | > n^2} \frac{1}{1 + | \ell |^2} \lesssim \frac1{n},
\end{align*}
which goes to $0$ as $n \rightarrow \infty \nocomma$.

\subsubsection{The martingale term}\label{sec:martingale}

Next, let us show tightness and convergence of the martingale term, which has
quadratic variation
\begin{align*}
 \left[ \int_0^{\cdot} \sum_k n^{- 1} \eta (n^{- 1} k + c_n r) \mathd
 \nabla^{(1)}_n W^n_r (k) \right]_t
 &= \left[ \int_0^{\cdot} \sum_k n^{- 1}
 \nabla^{(1)}_n \eta (n^{- 1} k + c_n r) \mathd W^n_r (k) \right]_t \\
 &= n^{- 2} \sum_k \int_0^t | \nabla^{(1)}_n \eta (n^{- 1} k + c_n r) |^2 n \mathd r,
\end{align*}
where we used that $\mathd [W^n (i), W^n (j)]_t = n \delta_{i, j} \mathd t$. It is not hard to see that for fixed $r \in [0, t]$
\[ n^{- 1} \sum_k | \nabla^{(1)}_n \eta (n^{- 1} k + c_n r) |^2 \lesssim \|
   \nabla \eta \|_{H^1}^2, \]
where
$\| \varphi \|_{H^k} = \sum_{\ell \leqslant k} \| \varphi^{(\ell)} \|_{L^2
   (\mathbb{R})}$.
So an application of the Burkholder-Davis-Gundy inequality yields for all $p
\geqslant 1$ and $s < t$
\begin{align*}
 \mathbb{E}_{{\mu}_{\lambda_0}} [| M^n_t (\eta) - M^n_s (\eta) |^p]
 &\lesssim \left( n^{- 1} \sum_k \int_s^t | \nabla^{(1)}_n \eta (n^{- 1} k +
 c_n r) |^2 \mathd r \right)^{p / 2} \\
 &\lesssim | t - s |^{p / 2} \| \nabla \eta \|_{H^1}^p \leqslant | t - s |^{p
 / 2} \| \eta \|_{H^2}^p .
\end{align*}
Now we apply the Kolmogorov-Chentsov criterion 
\cite{bib:kolmogorovCentsov} to obtain the tightness of $M^n(\eta)$ in $C\left( [0,1], \R \right)$.
By Mitoma's criterion \cite{bib:mitoma} $(M^n)$ is hence tight
in $C \left( [0, 1], \CS' \right)$.
In fact, by the dominated convergence
theorem we also get that the quadratic variation converges to $t \| \nabla
\eta \|^2_{L^2}$, which together with \cite[Proposition 5.1]{bib:aldous}
shows that $(M^n)$ converges to a space-time white noise. But of course we
still need to verify that the limit is also a martingale in the filtration
generated by $v$, the limit of $(v^n)$.

\subsubsection{The symmetric term}\label{sec:symmetric}

Next, we have to deal with the symmetric contribution
\[ \mathd S^n_t (\eta) = \sum_k n^{- 1 / 2} \eta (n^{- 1} k + c_n t)
   \CL^{(n)}_S u^n_t(k) \mathd t = \sum_k n^{3 / 2} \eta (n^{- 1} k + c_n
   t) \CL_S u^n_t (k) \mathd t, \]
where
\[ \CL_S = \sum_j \frac{1}{2} \Delta_D [V' (u)] (j) \partial_j + \sum_j
   (\partial_{j, j}^2 - \partial_{j, j + 1}^2), \]
so that $\CL_S u^n_t (k) = (1 / 2) \Delta_D [V' (u^n_t)_{}] (k)$, which gives
\begin{align*}
 \mathd S^n_t (\eta)
 &= \frac{1}{2} \sum_k n^{- 1 / 2} (\Delta_n \eta) (n^{-
 1} k + c_n t) V' (u^n_t (k))_{} \mathd t \\
 &= \frac{1}{2} \sum_k n^{- 1 / 2} \Delta_n \eta (n^{- 1} k + c_n t) (V'
 (u^n_t (k)) - \varphi_{V'} (\rho' (\lambda_0))) \mathd t,
\end{align*}
where we recall that $\varphi_{V'} (\rho' (\lambda_0))
=\mathbb{E}_{{\mu}_{\lambda_0}} [V' (u^n_t (^{} 0))]$ is independent of
$n$ and $t$ because $u^n$ is stationary{\tmstrong{}}. Thus, we get for $p \in
\mathbb{N}$
\begin{align*}
   & \mathbb{E}_{{\mu}_{\lambda_0}} [| S^n_t (\eta) - S^n_s (\eta) |^{2 p}]
\\
   & \hspace{50pt} \leqslant | t - s |^{2 p - 1} \int_s^t \mathbb{E}_{{\mu}_{\lambda_0}}
   \left[ \left| \frac{1}{2} \sum_k n^{- 1 / 2} \Delta_n \eta (n^{- 1} k + c_n
   r) (V' (u^n_r (k)) - \varphi_{V'} (\rho' (\lambda_0))) \right|^{2 p}
   \right] \mathd r \\
   & \hspace{50pt} \lesssim | t - s |^{2 p - 1} \int_s^t \mathbb{E}_{{\mu}_{\lambda_0}}
   \left[ \left( \sum_k n^{- 1} | \Delta_n \eta (n^{- 1} k + c_n r) |^2 | V'
   (u^n_r (k)) - \varphi_{V'} (\rho' (\lambda_0)) |^2 \right)^p \right] \mathd
   r, 
\end{align*}
where we used the Burkholder-Davis-Gundy inequality for discrete time
martingales in the second step. Further, Minkowski's inequality yields
\begin{align*}
 &\mathbb{E}_{{\mu}_{\lambda_0}} \left[ \left( \sum_k n^{- 1} | \Delta_n
 \eta (n^{- 1} k + c_n r) |^2 | V' (u (k)) - \varphi_{V'} (\rho'
 (\lambda_0)) |^2 \right)^p \right] \\
 &\qquad \leqslant \left( \sum_k n^{- 1} | \Delta_n \eta (n^{- 1} k + c_n r) |^2
 \right)^p E_{\lambda_0} [(V' (u (0)) - \varphi_{V'} (\rho' (\lambda_0)))^{2
 p}],
\end{align*}
and as for the martingale contribution we get
\[ \sum_k n^{- 1} | \Delta_n \eta (n^{- 1} k + c_n r) |^2 \lesssim \| \Delta
   \eta \|_{H^1}^2, \]
so that overall
\[ \mathbb{E}_{{\mu}_{\lambda_0}} [| S^n_t (\eta) - S^n_s (\eta) |^{2 p}]
   \lesssim | t - s |^{2 p} \mathbb{E}_{{\mu}_{\lambda_0}} [(V' (u (0)) -
   \varphi_{V'} (\rho' (\lambda_0)))^{2 p}] \| \eta \|_{H^3}^{2 p} . \]
   As before, it now suffices to apply Mitoma's criterion and the Kolmogorov-Chentsov theorem to obtain the tightness of $(S^n)$ in $C ([0, 1], \CS')$.

\subsubsection{The antisymmetric term}\label{sec:antisymmetric}

It remains to control
\[ \mathd A^n_t (\eta) = \left( n^{3 / 2} \sum_k \eta (n^{- 1} k + c_n t)
   \CL_A u^n_t (k) + \langle v^n (t), n^{- 1} c_n \nabla \eta \rangle \right)
   \mathd t, \]
where
$ \CL_A = \sum_j n^{- 1 / 2} \nabla^{(2)}_D [V' (x)] (j) \partial_j$, 
and therefore $\CL_A u^n_t (k) = n^{- 1 / 2} \nabla^{(2)}_D [V' (u^n_t)] (k)$
and then
\[ \mathd A^n_t (\eta) = \left( - \sum_k \nabla^{(2)}_n \eta (n^{- 1} k + c_n
   t) V' (u^n_t (k)) + \langle v^n (t), c_n \nabla \eta \rangle \right) \mathd
   t. \]
We start by dealing with the second term on the right hand side, for which we
would like to replace the continuous gradient $\nabla \eta$ by $\nabla^{(2)}_n
\eta$ in order to cancel the diverging transport contribution in the first term. We have
\[ \int_s^t \langle v^n (r), c_n (\nabla \eta - \nabla^{(2)}_n \eta) \rangle
   \mathd r = \int_s^t c_n n^{- 1 / 2} \sum_k (u^n_r (k) - \rho' (\lambda_0))
   (\nabla \eta - \nabla^{(2)}_n \eta) (n^{- 1} k + c_n r) \mathd r \]
and
\begin{equation}
 \label{eq:transport correction vanishes} 
   \begin{aligned}
 &\mathbb{E}_{{\mu}_{\lambda_0}} \left[ \left| \int_s^t c_n n^{- 1 / 2}
 \sum_k (u^n_r (k) - \rho' (\lambda_0)) (\nabla \eta - \nabla^{(2)}_n \eta)
 (n^{- 1} k + c_n r) \mathd r \right|^2 \right]  \\
 &\quad \leqslant | t - s | \int_s^t n^{- 1} \sum_k c^2_n \sigma^2_{\lambda_0} |
 (\nabla \eta - \nabla^{(2)}_n \eta) (n^{- 1} k + c_n r) |^2 \mathd r  \\
 &\quad \lesssim | t - s | \int_s^t c^2_n \sigma^2_{\lambda_0} n^{- 2} \| \nabla
 \eta \|_{H^1}^2 \mathd r  
 \lesssim | t - s |^2 n^{- 1} \| \nabla \eta \|_{H^1}^2, 
   \end{aligned}
   \end{equation}
where in the last step we used that $c_n = n^{1 / 2} \partial_{\rho}
\varphi_{V'} (\rho' (\lambda_0))$. It remains to bound
\begin{align*}
 &\mathbb{E}_{{\mu}_{\lambda_0}} \left[ \left( \int_s^t \sum_k
 (\nabla^{(2)}_n \eta) (n^{- 1} k + c_n r) \big( V' \left( u^n_r \left( k
 \right) \right) - n^{- 1 / 2} c_n (u^n_r (k) - \rho' (\lambda_0)) \big)
 \mathd r \right)^2 \right] \\
 &\quad =\mathbb{E}_{{\mu}_{\lambda_0}} \left[ \left( \int_s^t \sum_k
 (\nabla^{(2)}_n \eta) (n^{- 1} k + c_n r) \left( V' (u^n_r (k)) -
 \varphi_{V'} \left( \rho' \left( \lambda_0 \right) \right) -
 \partial_{\rho} \varphi_{V'} (\rho' (\lambda_0)) (u^n_r (k) - \rho'
 (\lambda_0)) \right) \mathd r \right)^2 \right] \\
 &\quad \lesssim \left( \frac{\ell}{n^2} + \frac{| t - s |}{\ell} \right) \| (r, k)
 \mapsto (\nabla^{(2)}_n \eta) (n^{- 1} k + c_n r) \|_{L^2 ([0, T] \times
 \mathbb{Z})}^2 \sup_{\lambda} \tmop{var}_{\lambda} (V'),
\end{align*}
where in the last step we used the first order Boltzmann-Gibbs principle,
Theorem~\ref{thm:first order BG}, and $\ell \geqslant 1$ can be chosen
arbitrarily. Now
\[ \| (r, k) \mapsto (\nabla^{(2)}_n \eta) (n^{- 1} k + c_n r) \|_{L^2 ([0, T]
   \times \mathbb{Z})}^2 \lesssim | t - s | n \| \nabla \eta \|_{H^1}^2, \]
and since $V'$ is Lipschitz continuous we get $\tmop{var}_{\lambda} (V')
\lesssim \sigma_{\lambda}^2$. If $n^{- 2} \leqslant | t - s |$, we take $\ell
= \lfloor n | t - s |^{1 / 2} \rfloor$ and obtain
\begin{align*}
   & \left( \frac{\ell}{n^2} + \frac{| t - s |}{\ell} \right) \| (r, k) \mapsto (\nabla^{(2)}_n \eta) (n^{- 1} k + c_n r) \|_{L^2 ([0, T] \times \mathbb{Z})}^2 \sup_{\lambda} \tmop{var}_{\lambda} (V') \\
   & \hspace{150pt}\lesssim | t - s |^{3 / 2} \| \nabla \eta \|_{H^1} \sup_{\lambda}
   \sigma_{\lambda}^2 \lesssim | t - s |^{3 / 2} \| \nabla \eta \|_{H^1} .
\end{align*}
On the other side if $| t - s | < n^{- 2}$, we simply estimate
\begin{align*}
 &\mathbb{E}_{{\mu}_{\lambda_0}} \left[ \left( \int_s^t \sum_k
 (\nabla^{(2)}_n \eta) (n^{- 1} k + c_n r) 2 \left( V' (u^n_r (k)) -
 \varphi_{V'} \left( \rho' (\lambda_0) \right) - \partial_{\rho}
 \varphi_{V'} (\rho' (\lambda_0)) (u^n_r (k) - \rho' (\lambda_0)) \right)
 \mathd r \right)^2 \right] \\
 &\quad \leqslant (t - s) \int_s^t \sum_k | (\nabla^{(2)}_n \eta) (n^{- 1} k + c_n
 r) |^2 \\
 &\qquad \times \mathbb{E}_{{\mu}_{\lambda_0}} \left[ \left( \left( V' (u^n_r
 (k)) - \varphi_{V'} (\rho' (\lambda_0)) - \partial_{\rho} \varphi_{V'}
 (\rho' (\lambda_0)) (u^n_r (k) - \rho' (\lambda_0)) \right) \right)^2
 \right] \mathd r \\
 &\quad \lesssim \| \nabla \eta \|_{H^1}^2 | t - s |^2 n \lesssim \| \nabla \eta
 \|_{H^1}^2 | t - s |^{3 / 2}.
\end{align*}
In conclusion, we have shown that
\begin{equation}\label{eq:zero-qv}
   \mathbb{E}_{{\mu}_{\lambda_0}} [| A^n_t (\eta) - A^n_s (\eta) |^2]
   \lesssim | t - s |^{3 / 2} \| \nabla \eta \|_{H^1}^2 \lesssim | t - s |^{3
   / 2} \| \eta \|_{H^2}^2,
\end{equation}
   from where we get the tightness of $(A^n)$ in $C ([0, 1], \CS')$ by applying once more Mitoma's criterion and the Kolmogorov-Chentsov theorem.

Since all our tightness results are based on moment bounds, we also get the
joint tightness that we stated in Lemma~\ref{lem:tightness}.

\subsection{Identification of the limit}\label{sec:limit}

Here we show that every limit point of the sequence $(v^n)$ is an energy
solution of the stochastic Burgers equation. Since the uniqueness in law
of energy solutions was established in~\cite{bib:energyUniqueness}, this concludes the proof of
convergence. Let us first give the definition of energy solutions that was
introduced in \cite{bib:gubinelliJara}. Before we state it recall that the
quadratic variation in the sense of Russo and Vallois \cite{bib:russoVallois}
of a stochastic process $X$ with trajectories in $C ([0, 1],
\mathbb{R})$ is defined as
\[ [X]_t = \lim_{\delta \rightarrow 0} \int_0^t \frac{1}{\delta}
   (X_{s + \delta} - X_s)^2 \mathd s, \]
where the convergence is uniformly on compacts in probability (we write
\tmtextit{ucp}-convergence). If $X$ is a continuous semimartingale, then $[X]$
is nothing but its semimartingale quadratic variation.

\begin{definition}[Controlled process]  
  Denote with $\mathcal{Q}$ the space of pairs $(u, \mathcal{A})_{0 \leqslant
  t \leqslant T}$ of generalized stochastic processes with paths in $C \left(
  [0, 1], \CS' \right)$ such that
  \begin{enumerateroman}
    \item for all $t \in [0, 1]$ the law of $u_t$ is that of a spatial
    white noise with variance $\sigma_{\lambda_0}^2$;
    
    \item for any test function $\eta \in \CS$, the process $t \mapsto
    \mathcal{A}_t (\eta)$ is almost surely of zero quadratic variation in the
    sense of Russo and Vallois, $\mathcal{A}_0 (\eta) = 0$, and the pair $(u
    (\eta), \mathcal{A} (\eta))$ satisfies the equation
    \[ u_t (\eta) = u_0 (\eta) + \frac{\partial_{\rho} \varphi_{V'} (\rho'
       (\lambda_0))}{2} \int_0^t u_s (\Delta \eta) \tmop{ds} +\mathcal{A}_t
       (\eta) + M_t (\eta), \]
    where $M (\eta)$ is a martingale with respect to the filtration generated
    by $(u, \mathcal{A})$ with quadratic variation $[M (\eta)]_t = t \| \nabla
    \eta \|_{L^2 (\mathbb{R})}^2$;
    
    \item for $T > 0$ the reversed processes $\hat{u}_t = u_{T - t}$,
    $\hat{\mathcal{A}}_t = - (\mathcal{A}_T -\mathcal{A}_{T - t})$ satisfies
    for all $\eta \in \CS$
    \[ \hat{u}_t (\eta) = \hat{u}_0 (\eta) + \frac{\partial_{\rho}
       \varphi_{V'} (\rho' (\lambda_0))}{2} \int_0^t \hat{u}_s (\Delta \eta)
       \tmop{ds} + \hat{\mathcal{A}}_t (\eta) + \hat{M}_t (\eta), \]
    where $\hat{M} (\eta)$ is a martingale with respect to the filtration
    generated by $(\hat{u}, \hat{\mathcal{A}})$ with quadratic variation
    $[\hat{M} (\eta)]_t = t \| \nabla \eta \|_{L^2 (\mathbb{R})}^2$.
  \end{enumerateroman}
\end{definition}

\begin{remark}
  Recall from Lemma~\ref{lem:constants} that $\partial_{\rho} \varphi_{V'} (\rho) = \sigma_{h'(\rho)}^{-2}$, which shows that the factor in front of the Laplacian is always strictly positive.
\end{remark}

We will need the following results that both will be included in the revised version of~\cite{bib:energyUniqueness}.

\begin{lemma}[see~{\cite{bib:energyUniqueness}}]\label{lem:Burgers nonlinearity}
  Let $(u, \mathcal{A}) \in \mathcal{Q}$, and $\chi \in L^1 (\mathbb{R}) \cap
  L^2 (\mathbb{R})$ with $\int_{\mathbb{R}} \chi (x) \mathd x = 1$. Define
  for $\delta > 0 \nocomma$, $x \in \mathbb{R}$ the rescaled function
  $\chi^{\delta}_x (\cdot) = \delta^{- 1} \chi (\delta (x -
  \cdummy)) .$ Then for all $\eta \in \CS$ the ucp-limit
  \[  \int_0^{\cdummy} \nabla u_s^2 (\eta) \mathd s \assign \lim_{\delta \rightarrow 0} \int_0^{\cdummy} \int_{\mathbb{R}}
     \nabla [u_s (\chi^{\delta}_x)^2] \eta (x) \mathd x \mathd s \]
  exists and does not depend on $\chi$.
\end{lemma}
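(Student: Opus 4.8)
The plan is to fix $\eta \in \CS$, set
\[
   \mathcal{B}^\delta_t(\eta) \assign \int_0^t \int_\R \nabla[u_s(\chi^\delta_x)^2]\,\eta(x)\,\dd x\,\dd s, \qquad t \in [0,1],
\]
and to prove that $(\mathcal{B}^\delta(\eta))_{\delta>0}$ is Cauchy in $C([0,1],\R)$ for the topology of ucp-convergence as $\delta \to 0$, and that the same estimate with the mollifier $\chi$ replaced by another admissible $\chi'$ forces the limit to be independent of $\chi$. Since ucp-convergence on $C([0,1],\R)$ is induced by a complete metric, this gives the existence of the ucp-limit. I will use throughout that, by condition (i) in the definition of $\mathcal{Q}$, the law of $u_s$ is a translation-invariant space white noise with variance $\sigma_{\lambda_0}^2$; consequently $x \mapsto \mathbb{E}[u_s(\chi^\delta_x)^2]$ is constant, so that $\nabla[u_s(\chi^\delta_x)^2] = \nabla(\,:\!u_s(\chi^\delta_x)^2\!:\,)$ with $:\!\cdummy\!:$ the Wick ordering, and $G^\delta_s \assign \int_\R \nabla[u_s(\chi^\delta_x)^2]\,\eta(x)\,\dd x$ is a mean-zero element of the second homogeneous Wiener chaos of $u_s$, with symmetric kernel $f^\delta \in L^2(\R^2)$ given by $f^\delta(y,z) = -\int_\R \chi^\delta_x(y)\chi^\delta_x(z)\,\nabla\eta(x)\,\dd x$.

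The main tool is an It\^o trick at the level of the controlled process, i.e.\ the continuum counterpart of Lemma~\ref{lem:Kipnis-Varadhan}. Let $D$ denote the Malliavin derivative with respect to $u_0$, let $\mathcal{E}_0$ be the Gaussian Dirichlet form naturally attached to $(u,\mathcal{A})$ — on cylinder functionals it acts, up to a positive constant, as $G \mapsto \tfrac12\mathbb{E}[\|\partial_x D G\|_{L^2(\R)}^2]$, and it plays on the second chaos the r\^ole that $\CE$ and $\CL_S$ play on the lattice — and set $\|G\|_{-1}^2 \assign \sup_H\{2\mathbb{E}[G H] - \mathcal{E}_0(H)\}$. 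Then I claim there is a constant $C$ such that for every progressively measurable family with $G_s = G_s(u_s)$, $\mathbb{E}[G_s]=0$ and $\int_0^1 \|G_s\|_{-1}^2\,\dd s < \infty$ one has
\[
   \mathbb{E}\Big[\sup_{0\le t\le 1}\Big(\int_0^t G_s(u_s)\,\dd s\Big)^2\Big] \le C \int_0^1 \|G_s\|_{-1}^2\,\dd s .
\]
This is established as in \cite{bib:gubinelliJara, bib:energyUniqueness}: conditions (ii) and (iii) say that, tested against test functions, $u$ is simultaneously a forward and a backward Dirichlet process with explicit martingale parts, which is enough to run the forward--backward (Lyons--Zheng) martingale argument; there the zero quadratic variation of $\mathcal{A}$ plays the r\^ole that reversibility plays in the classical Kipnis--Varadhan estimate, and the bound is then obtained by a computation in the Gaussian space of $u_0$.

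It then remains to apply this with $G_s = G^\delta_s - G^{\delta'}_s$ and to check that $\sup_{s\in[0,1]}\|G^\delta_s - G^{\delta'}_s\|_{-1} \to 0$. On the second chaos both $\mathbb{E}[(\cdummy)^2]$ and $\mathcal{E}_0$ are explicit quadratic forms in the kernel (the latter carrying one spatial derivative), so $\|G^\delta_s - G^{\delta'}_s\|_{-1}^2$ equals a weighted negative-order Sobolev norm of $f^\delta - f^{\delta'}$. The crucial point is that $\nabla\eta$ is a Schwartz function with $\int_\R \nabla\eta(x)\,\dd x = 0$: this vanishing integral supplies precisely the spatial cancellation that makes $f^\delta$ converge in that norm as $\delta \to 0$, to a limiting kernel not involving $\chi$ — the continuum reflection of the gain of a derivative's worth of regularity when passing from $u^2$ to $\nabla u^2$. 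Carrying out this elementary but somewhat tedious Gaussian computation yields $\sup_s\|G^\delta_s - G^{\delta'}_s\|_{-1} \to 0$, and hence, via the displayed bound, both the ucp-Cauchy property and the independence of $\chi$.

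The hard part is the abstract It\^o trick of the second paragraph: turning the purely qualitative content of (ii)--(iii) — zero quadratic variation of $\mathcal{A}$ together with the time-reversal identity — into the quantitative $\|\cdummy\|_{-1}$ estimate. Once that is available the remaining steps are a completeness argument for the ucp metric and second-chaos computations, both routine.
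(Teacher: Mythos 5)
The paper itself gives no proof of this lemma --- it is imported from \cite{bib:energyUniqueness} --- and your sketch follows exactly the strategy of that reference: the forward--backward (Lyons--Zheng) It\^o trick for controlled processes, which turns conditions (ii)--(iii) together with the zero quadratic variation of $\mathcal{A}$ into a Kipnis--Varadhan bound in the Gaussian $\mathcal{H}^{-1}$ norm, followed by the explicit second-chaos kernel estimate in which the derivative carried by $\nabla\eta$ (equivalently $\widehat{\nabla\eta}(0)=0$) provides the infrared cancellation that makes the $\mathcal{H}^{-1}$ norms of the kernel differences vanish as $\delta\to 0$. Your proposal is therefore correct and takes essentially the same route; the one step you defer to \cite{bib:gubinelliJara} and \cite{bib:energyUniqueness} --- the quantitative It\^o trick for elements of $\mathcal{Q}$ --- is precisely the step worked out in detail in those references.
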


\begin{definition}[Energy solution]
  A controlled process $(u, \mathcal{A}) \in \mathcal{Q}$ is an
  \tmtextbf{energy solution} to the stochastic Burgers equation if for all
  $\eta \in \CS$ almost surely
  \[ \mathcal{A} (\eta) = \frac{(\partial_{\rho \rho} \varphi_{V'}) (\rho'
     (\lambda_0))}{2} \int_{0^{}}^{\cdummy} \nabla u_s^2 (\eta) \tmop{ds} . \]
\end{definition}

\begin{theorem}[see~{\cite{bib:energyUniqueness}}]
   There is a unique energy solution to the stochastic Burgers equation.
\end{theorem}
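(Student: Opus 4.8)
The plan is to reduce the uniqueness statement to the classical well-posedness of the linear stochastic heat equation by performing a Cole--Hopf transform directly at the level of energy solutions, as carried out in~\cite{bib:energyUniqueness}; existence of an energy solution is the easy part and follows for instance from the tightness and identification arguments of Section~\ref{sec:convergence} applied to any subsequential limit of the approximating sequence (or from a direct approximation scheme), so the real content is uniqueness in law. Note that it is exactly the forward--backward structure built into the space $\mathcal{Q}$ of controlled processes, i.e.\ item~(iii) of the definition, that makes this possible; the plain martingale solutions of~\cite{bib:goncalvesJara}, which only satisfy the forward equation, are not known to be unique.

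Let $(u,\mathcal{A})\in\mathcal{Q}$ be any energy solution. First I would introduce the Cole--Hopf constant
\[
   \gamma \assign \frac{\partial_{\rho\rho}\varphi_{V'}(\rho'(\lambda_0))}{\partial_{\rho}\varphi_{V'}(\rho'(\lambda_0))} = -\frac{m_{3,\lambda_0}}{\sigma_{\lambda_0}^4},
\]
so that, at least formally, $Z = e^{\gamma h}$ linearises the equation, where $h$ is a spatial antiderivative of $u$ (if $\gamma = 0$, i.e.\ $m_{3,\lambda_0}=0$, the Burgers equation is already linear and there is nothing to prove, so assume $\gamma\neq 0$). Since $u$ is only a distribution this must be done after regularising: set $u^{\delta} = u * \chi^{\delta}$ for a smooth, compactly supported approximate identity and let $h^{\delta}$ be its spatial primitive. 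Using the forward equation from the definition of $\mathcal{Q}$ together with the energy solution identity for $\mathcal{A}$, the process $t\mapsto h^{\delta}_t(x)$ is a continuous semimartingale whose drift contains the term $\tfrac12\partial_{\rho}\varphi_{V'}(\rho'(\lambda_0))\,\Delta h^{\delta}_t(x)$ and a term proportional to $(u^{\delta}_t(x))^2$ coming from $\mathcal{A}$, and whose martingale part is a spatially mollified space--time white noise. Applying It\^o's formula to $Z^{\delta}_t = \exp(\gamma h^{\delta}_t - \kappa_{\delta} t)$ for an appropriately diverging constant $\kappa_{\delta}$ then yields an equation of the schematic form
\[
   \dd Z^{\delta}_t = \tfrac12\partial_{\rho}\varphi_{V'}(\rho'(\lambda_0))\,\Delta Z^{\delta}_t\,\dd t + \gamma Z^{\delta}_t\,\dd\xi^{\delta}_t + \mathcal{R}^{\delta}_t\,\dd t,
\]
where $\xi^{\delta}$ is the mollified noise and $\mathcal{R}^{\delta}$ collects the It\^o cross terms together with the discrepancy between the $(u^{\delta})^2$-drift and the renormalised square of the mollified noise. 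The constant $\kappa_{\delta}$ is precisely the one absorbing the divergent part of that discrepancy, and this cancellation is the reason why the Wick-type renormalisation encoded in the notion of energy solution is the ``right'' one.

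The core of the argument is to show that $Z^{\delta}\to Z$ and $\mathcal{R}^{\delta}\to 0$ in a suitable ucp sense as $\delta\to 0$, and that the limit $Z$ is a \emph{mild} solution of the stochastic heat equation
\[
   \partial_t Z = \tfrac12\partial_{\rho}\varphi_{V'}(\rho'(\lambda_0))\,\Delta Z + \gamma Z\xi
\]
driven by the space--time white noise $\xi$ canonically associated with the martingale $M$ of the energy solution (the one with $[M(\eta)]_t = t\|\nabla\eta\|_{L^2}^2$). This is where the full strength of the energy solution structure enters: the forward--backward It\^o trick — the continuum analogue of the mechanism behind Lemma~\ref{lem:Kipnis-Varadhan} and the second order Boltzmann--Gibbs principle — provides the quantitative $L^2$ estimates on $\mathcal{A}$ and its mollifications that control $\mathcal{R}^{\delta}$; the stationarity of $u_t$ as a spatial white noise gives the a priori regularity of $Z^{\delta}$, uniform in $\delta$, needed to make sense of the product $Z^{\delta}\xi^{\delta}$ and of its limit; and the prescribed quadratic variation of $M$ pins down the multiplicative noise in the limit.

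Once $Z$ is identified as a mild solution of the stochastic heat equation with strictly positive diffusivity $\tfrac12\partial_{\rho}\varphi_{V'}(\rho'(\lambda_0)) > 0$ (positivity by Lemma~\ref{lem:constants}) and multiplicative white noise, the classical theory of the linear stochastic heat equation (Walsh, Da Prato--Zabczyk, Bertini--Cancrini) gives uniqueness in law of such solutions once the initial datum is fixed; and here $Z_0 = e^{\gamma h_0}$ is determined by the law of $u_0$, which is a spatial white noise of variance $\sigma_{\lambda_0}^2$. Since $Z$ stays strictly positive, the transform is invertible and $u$ is recovered as $u = \gamma^{-1}\nabla\log Z$ (up to the bookkeeping of the spatial primitive), so the law of $u$ is a measurable functional of the law of $Z$ and is therefore uniquely determined; together with existence this proves the theorem. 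I expect the main obstacle to be the middle step — rigorously justifying the Cole--Hopf transform, i.e.\ proving that $\mathcal{R}^{\delta}\to 0$ and that no anomalous It\^o correction beyond the deterministic constant $\kappa_{\delta}$ survives in the limit — which is exactly the delicate analysis carried out in~\cite{bib:energyUniqueness}.
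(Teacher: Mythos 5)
This theorem is not proved in the paper at all: it is imported verbatim from~\cite{bib:energyUniqueness}, so there is no internal argument to compare against, and the only question is whether your sketch matches that reference's strategy — which it does: uniqueness there is indeed obtained by a Cole--Hopf transform performed directly on the energy solution (with exactly your constant $\gamma=-m_{3,\lambda_0}/\sigma_{\lambda_0}^4$), mollifying, applying It\^o's formula, and using the forward--backward (controlled) structure via the It\^o trick to kill the remainder so that the limit solves the linear stochastic heat equation, whose well-posedness and strict positivity then yield uniqueness in law, while existence comes from approximation results such as Section~\ref{sec:convergence}. Your proposal is therefore consistent with the intended proof; the one step you leave open (showing $\mathcal{R}^{\delta}\to 0$ with no surviving anomalous correction) is precisely the technical core of~\cite{bib:energyUniqueness}, which is the same division of labour the paper itself makes by citing that work.
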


In Lemma~\ref{lem:tightness} it was shown that
\[ \mathd v^n_t = \mathd S^n_t + \mathd A^n_t + \mathd M^n_t, \]
where $S^n$ is the symmetric part, $A^n$ the antisymmetric part, and $M^n$ a
martingale, with each one of them being tight in $C \left( [0, 1], \CS'
\right)$. If now $T > 0$ and $\hat{u}^n_t = u^n_{T - t}$ and
\[ \hat{v}^n_t = v^n_{T - t} = \sum_k n^{1 / 2} (u^n_{T - t} (k) - \rho'
   (\lambda_0)) n^{- 1} \delta_{n^{- 1} (k + c_n (T - t))} \]
then by Corollary~\ref{cor:reversedEvolution} we get
\[ \mathd \hat{v}^n_t = \mathd \hat{S}^n_t + \mathd \hat{A}^n_t + \mathd
   \hat{M}^n_t, \]
where $\hat{M}^n$ is a martingale with quadratic variation
\[ [\hat{M}^n (\eta)]_t = n^{- 2} \sum_k \int_0^t | \nabla^{(1)}_n \eta (n^{-
   1} k + c_n (T - r)) |^2 n \mathd r, \]
and $\hat{S}^n$ and $\hat{A}^n$ satisfy
\[ \mathd \hat{S}^n_t (\eta) = \sum_k n^{- 1 / 2} \eta (n^{- 1} k + c_n (T -
   t)) \CL^{(n)}_S \hat{u}^n_t (k) \mathd t \]
and
\[ \mathd \hat{A}^n_t (\eta) = \Big( \sum_k n^{- 1 / 2} \eta (n^{- 1} k + c_n
   (T - t)) (- \CL^{(n)}_A) \hat{u}^n_t (k) - \langle \hat{v}^n_t, n^{- 1} c_n
   \nabla \eta \rangle \Big) \mathd t, \]
respectively. Thus, the same arguments as in Section~\ref{sec:tightness} give
the joint tightness of
\[ (v^n_0, S^n, A^n, M^n, \hat{S}^n, \hat{A}^n, \hat{M}^n)_{n \in \mathbb{N}}
\]
in $\CS' \times C \left( [0, T], \CS' \right)^6$. From now on we fix a
converging subsequence, which by abuse of notation we index again by $n \in
\mathbb{N}$. Obviously the following theorem then implies
our main result, Theorem~\ref{thm:main result}.

\begin{theorem}
  For $v = \lim_n v^n$ and $\mathcal{A}= \lim_n A^n$ we have $(v, \mathcal{A})
  \in \mathcal{Q}$ and the pair is an energy solution to the stochastic
  Burgers equation.
\end{theorem}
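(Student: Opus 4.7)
The plan is to verify, in turn, each of the properties (i)--(iii) in the definition of $\mathcal{Q}$ and then the energy-solution identity for $(v, \mathcal{A})$. Property (i) is immediate: Section~\ref{sec:fixedTimes} shows that $v^n_t$ converges in law to a spatial white noise of variance $\sigma_{\lambda_0}^2$ at every fixed $t$, so the same holds for $v_t$. For property (ii) one passes to the limit in the decomposition $v^n_t(\eta) = v^n_0(\eta) + S^n_t(\eta) + A^n_t(\eta) + M^n_t(\eta)$ along the chosen subsequence. The symmetric part is identified by applying the first-order Boltzmann--Gibbs principle (Theorem~\ref{thm:first order BG}) to $F(u) = V'(u(0))$ with $g_s(k) = \tfrac12 n^{-1/2}\Delta_n\eta(n^{-1}k+c_n s)$ and a choice $\ell = \ell_n$ satisfying $\ell_n n^{-2} + T\ell_n^{-1} \to 0$: this reduces $S^n_t(\eta)$ to $\tfrac{\partial_\rho\varphi_{V'}(\rho'(\lambda_0))}{2}\int_0^t v^n_s(\Delta_n\eta)\mathd s$ up to an $L^2$-error $o_n(1)$, whose limit is $\tfrac{\partial_\rho\varphi_{V'}(\rho'(\lambda_0))}{2}\int_0^t v_s(\Delta\eta)\mathd s$ since $\Delta_n\eta\to\Delta\eta$ in $\CS$. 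For the martingale part, $[M^n(\eta)]_t$ converges in $L^1$ to $t\|\nabla\eta\|_{L^2(\R)}^2$ by dominated convergence, and the martingale property of $M$ in the filtration generated by $(v,\mathcal{A})$ follows from the $L^p$-BDG bounds of Section~\ref{sec:martingale} together with the martingale property of each $M^n$ in the natural filtration of the driving Brownian motions (a standard argument as in~\cite{bib:gubinelliJara}).

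To identify $\mathcal{A}:=A$ explicitly, first observe that the uniform bound~\eqref{eq:zero-qv} passes to the limit and yields $\mathbb{E}[|\mathcal{A}_t(\eta)-\mathcal{A}_s(\eta)|^2]\lesssim|t-s|^{3/2}$, so $\mathcal{A}(\eta)$ has zero Russo--Vallois quadratic variation. Next, using~\eqref{eq:transport correction vanishes} to replace $c_n\nabla\eta$ by $c_n\nabla^{(2)}_n\eta$ (at negligible cost) and the telescoping identity $\sum_k\nabla^{(2)}_n\eta(n^{-1}k+c_n s)=0$ to insert the constant $\varphi_{V'}(\rho'(\lambda_0))$ inside the bracket, one rewrites
\[ A^n_t(\eta) = -\int_0^t\sum_k \nabla^{(2)}_n\eta(n^{-1}k+c_n s)\,F(\tau_k u^n_s)\,\mathd s + o_n(1), \]
where the local function $F(u) := V'(u(0)) - \varphi_{V'}(\rho'(\lambda_0)) - \partial_\rho\varphi_{V'}(\rho'(\lambda_0))(u(0)-\rho'(\lambda_0))$ satisfies $\varphi_F(\rho'(\lambda_0)) = \partial_\rho\varphi_F(\rho'(\lambda_0)) = 0$ and $\partial_{\rho\rho}\varphi_F(\rho'(\lambda_0)) = \partial_{\rho\rho}\varphi_{V'}(\rho'(\lambda_0))$. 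The second-order Boltzmann--Gibbs principle (Theorem~\ref{thm:second order BG}) with $\ell = \ell_n$ such that $\ell_n n^{-2} + T\ell_n^{-2} \to 0$ (e.g.\ $\ell_n = n^{2/3}$) then gives
\[ A^n_t(\eta) = -\frac{\partial_{\rho\rho}\varphi_{V'}(\rho'(\lambda_0))}{2}\int_0^t \sum_k \nabla^{(2)}_n\eta(n^{-1}k+c_n s)\,\CQ_{\lambda_0}(\ell_n;\tau_k u^n_s)\,\mathd s + o_n(1). \]
Introducing the box kernel $\psi^{\ell_n}_y := (n/\ell_n)\mathbf{1}_{[y,y+\ell_n/n)}$, a direct computation shows $v^n_s(\psi^{\ell_n}_{x_k}) = n^{1/2}(\overline{(\tau_k u^n_s)}^{\ell_n} - \rho'(\lambda_0))$ for $x_k := n^{-1}k + c_n s$, hence $\CQ_{\lambda_0}(\ell_n;\tau_k u^n_s) = n^{-1}v^n_s(\psi^{\ell_n}_{x_k})^2 - \sigma_{\lambda_0}^2/\ell_n$. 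The constant contribution vanishes by telescoping and the remainder is a Riemann sum of spacing $n^{-1}$ for $\int_\R \nabla\eta(y) v^n_s(\psi^{\ell_n}_y)^2\,\mathd y$. Integrating by parts and invoking Lemma~\ref{lem:Burgers nonlinearity} with $\chi = \mathbf{1}_{[0,1)}$ and mollification parameter $\delta_n = \ell_n/n\to 0$ identifies the limit as $\tfrac{\partial_{\rho\rho}\varphi_{V'}(\rho'(\lambda_0))}{2}\int_0^t \nabla v_s^2(\eta)\,\mathd s$, which is the required energy-solution identity.

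Property (iii) is then obtained by repeating these arguments for the reversed process. Corollary~\ref{cor:reversedEvolution} together with It\^o's formula applied to $\hat v^n_t(\eta) = \sum_k n^{-1/2}(\hat u^n_t(k) - \rho'(\lambda_0))\eta(n^{-1}k + c_n(T-t))$ produces a decomposition $\hat v^n_t - \hat v^n_0 = \hat S^n_t + \hat A^n_t + \hat M^n_t$ in which $\hat M^n$ is a martingale in the filtration of $\hat u^n$, the symmetric generator is unchanged, and both the antisymmetric generator and the transport correction $-c_n\hat v^n_s(\nabla\eta)$ have flipped signs. The Boltzmann--Gibbs arguments apply verbatim and give $\hat S^n_t(\eta)\to\tfrac{\partial_\rho\varphi_{V'}(\rho'(\lambda_0))}{2}\int_0^t \hat v_s(\Delta\eta)\,\mathd s$ and $\hat A^n_t(\eta)\to -\tfrac{\partial_{\rho\rho}\varphi_{V'}(\rho'(\lambda_0))}{2}\int_0^t \nabla \hat v_s^2(\eta)\,\mathd s$; the change of variables $s\mapsto T-s$ shows that the latter equals $-(\mathcal{A}_T(\eta)-\mathcal{A}_{T-t}(\eta))$, as required. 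The main obstacle, in my view, is the double-limit argument inside the middle paragraph: one must justify simultaneously the passages $n\to\infty$ with $\ell_n/n\to 0$ and the ucp-limit $\delta\to 0$ in Lemma~\ref{lem:Burgers nonlinearity}, which requires equicontinuity in $\delta$ of the $\delta$-approximants of $\nabla v^2(\eta)$ uniform in $n$.
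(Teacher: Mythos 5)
Your treatment of points (i)--(v) --- fixed-time convergence to the white noise, the martingale property of $M$ in the filtration of $(v,\mathcal{A})$ via uniform moment bounds, the zero quadratic variation of $\mathcal{A}$ from \eqref{eq:zero-qv} and Fatou, the identification of the symmetric part through the first order Boltzmann--Gibbs principle, and the reversed-process argument --- matches the paper's proof in substance. The problem is the step you yourself flag as ``the main obstacle'': it is a genuine gap, and it sits exactly at the heart of point (vi). By choosing $\ell_n = n^{2/3}$ you make the second order Boltzmann--Gibbs error vanish, but you are then left with the term $\int_0^t \sum_k \nabla^{(2)}_n\eta(\cdot)\,\CQ_{\lambda_0}(\ell_n;\tau_k u^n_s)\,\mathd s$, whose mollification scale $\delta_n = \ell_n/n$ tends to $0$ together with $n$. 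Lemma~\ref{lem:Burgers nonlinearity} cannot be ``invoked with mollification parameter $\delta_n$'': it is a statement about the limiting pair $(u,\mathcal{A})\in\mathcal{Q}$ only, asserting existence of the ucp-limit as $\delta\to 0$ of the fixed-$\delta$ approximants built from $u$; it says nothing about prelimit quantities with $n$-dependent smoothing, and the uniform-in-$n$ equicontinuity in $\delta$ that would legitimize the diagonal passage is precisely what you do not have. So the proposal does not close.

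The paper resolves this by ordering the limits the other way. One fixes $\delta>0$ and applies Theorem~\ref{thm:second order BG} with $\ell = \delta n$; the error is then \emph{not} $o_n(1)$ but is bounded, uniformly in $n$, by a constant times $\delta t$. With $\delta$ fixed one can pass $n\to\infty$ in the remaining term: using $n\,\tau_k\CQ_{\lambda_0}(\delta n;u^n_t) = v^n_t(i_\delta(n^{-1}k+c_nt))^2 - \sigma^2_{\lambda_0}/\delta$ and an $L^2$ Riemann-sum argument (with a smooth approximation of the indicator $i_\delta$, exploiting that $v_s$ is a multiple of the white noise), the limit is identified as $\mathcal{A}^\delta_t(\eta)$, the fixed-$\delta$ approximation of the nonlinearity evaluated on the limit process. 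Only at this stage does one send $\delta\to 0$, where Lemma~\ref{lem:Burgers nonlinearity} applies and the accumulated error $\lesssim \delta t$ disappears. If you want to salvage your route with $\ell_n/n\to 0$, you would have to prove the missing uniform-in-$n$ comparison between the $\delta$- and $\delta_n$-approximants (essentially a two-blocks-type estimate at the level of the quadratic field), which is more work than simply accepting the $O(\delta t)$ error and keeping $\delta$ fixed until the very end. A minor additional remark: the smallness condition you state for the second order step, $\ell_n n^{-2} + T\ell_n^{-2}\to 0$, omits the factor $\|g\|^2_{L^2([0,T]\times\Z)}\sim Tn$; the correct requirement is $n^{1/2}\ll \ell_n\ll n$, which your example $\ell_n=n^{2/3}$ happens to satisfy.
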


\begin{proof}
  Let us write also $X = \lim_n X^n$ for $X = S, \hat{S}, M, \hat{M}$, and
  $\hat{\mathcal{A}} = \lim_n \hat{A}^n$. We have to show that
  \begin{enumerateroman}
    \item for all $t \in [0, T]$ the law of $v_t$ is that of a space white
    noise with variance $\sigma_{\lambda_0}^2$;
    
    \item $M$ is a martingale in the filtration of $(v, \mathcal{A})$, with
    quadratic variation $[M (\eta)]_t = t | | \nabla \eta | |_{L^2}$;
    
    \item $\mathcal{A}$ has zero quadratic variation;
    
    \item $S_t (\eta) = \frac{\partial_{\rho} \varphi_{V'} (\rho'
    (\lambda_0))}{2} \int_{0^{}}^t v_s (\Delta \eta) \tmop{ds}$;
    
    \item the same conditions hold for the reversed process and
    $\hat{\mathcal{A}} = - (\mathcal{A}_T -\mathcal{A}_{T - t})$;
    
    \item for every $\eta \in \CS$ we have $\mathcal{A}_t (\eta) =
    \frac{(\partial_{\rho \rho} \varphi_{V'}) (\rho' (\lambda_0))}{2} \int_0^t
    \nabla u_s^2 (\eta) \mathd s$.
  \end{enumerateroman}
  Point i. was already shown in Section~\ref{sec:fixedTimes}.  As for ii., let $\Phi : \mathbb{R}^{2 N} \rightarrow \mathbb{R}$ be a
  bounded measurable function, let $0 \leqslant t_1 < \ldots < t_{N + 1}
  \leqslant T$, and let $\eta_1, \ldots, \eta_{N + 1} \in \CS$. Then the
  uniform moment bounds for $M^n$ that we derived in
  Section~\ref{sec:martingale} show that
  \begin{align*}
     & \mathbb{E} [\Phi (v_{t_1} (\eta_1), \mathcal{A}_{t_1} (\eta_1), \ldots,
     v_{t_N} (\eta_N), \mathcal{A}_{t_N} (\eta_N)) (M_{t_{N + 1}} (\eta_{N +
     1}) - M_{t_N} (\eta_N))] \\
      &\qquad = \lim_{n \rightarrow \infty} \mathbb{E} [\Phi (v^n_{t_1} (\eta_1),
     \mathcal{A}^n_{t_1} (\eta_1), \ldots, v_{t_N} (\eta_N),
     \mathcal{A}^n_{t_N} (\eta_N)) (M^n_{t_{N + 1}} (\eta_{N + 1}) - M^n_{t_N}
     (\eta_N))] = 0.
  \end{align*}
     Now we obtain from the monotone class theorem (see e.g. \cite[Corollary~4.4]{bib:ethierKurtz}) that $M$ is a martingale in the
  filtration generated by $(v, \mathcal{A})$. The same argument also shows
  that for $\eta \in \CS$ the process $(M_t^2 (\eta) - t \| \nabla \eta
  \|_{L^2 (\mathbb{R})}^2)_{t \in [0, T]}$ is a martingale, and therefore $[M (\eta)]_t = t \| \nabla \eta \|^2_{L^2
  (\mathbb{R})}$. Moreover, the same line of reasoning also works for the
  backward martingale $\hat{M}$.
  
  Next let us show iii., that is $[\mathcal{A} (\eta)] \equiv 0$, where the
  quadratic variation is in the sense of Russo and Vallois. We showed in~\eqref{eq:zero-qv} that $\mathbb{E}_{{\mu}_{\lambda_0}} [|
  A^n_t (\eta) - A^n_s (\eta) |^2] \lesssim | t - s |^{3 / 2}$, uniformly in
  $n.$ \ By Fatou's lemma we then get
  \begin{eqnarray*}
    \mathbb{E}_{{\mu}_{\lambda_0}} \left[ \int_0^t \frac{1}{\delta}
    | \mathcal{A}_{s + \delta} (\eta) -\mathcal{A}_s (\eta) |^2 \tmop{ds}
    \right] & \leqslant & \liminf_n \mathbb{E}_{{\mu}_{\lambda_0}} \left[
    \int_0^t \frac{1}{\delta} | A^n_{s + \delta} (\eta) - A^n_s
    (\eta) |^2 \tmop{ds} \right]\\
    & \lesssim & \int_0^t \frac{1}{\delta} \delta^{3 / 2} \tmop{ds}
    \xrightarrow{\delta \rightarrow 0} 0.
  \end{eqnarray*}
  For iv. it suffies to show that for all $\eta \in \CS$ and $t \in
  [0, T]$ we have
  \[ \lim_{n \rightarrow \infty} \mathbb{E}_{{\mu}_{\lambda_0}} \Big[
     \Big( S^n_t (\eta) - \frac{\partial_{\rho} \varphi_{V'} (\rho'
     (\lambda_0))}{2} \int_0^t v^n_s (\triangle \eta) \tmop{ds} \Big)^2
     \Big] = 0. \]
  As we have seen in Section \ref{sec:symmetric}, $S_t^n (\eta) = \int_0^t
  \frac{1}{2} \sum_k n^{- 1 / 2} \Delta_n \eta (n^{- 1} k + c_n s) (V' (u^n_s
  (k)) - \varphi_{V'} (\rho' (\lambda_0))) \mathd s$, and therefore
  \begin{eqnarray*}
    \mathbb{E} \Big[ \Big( S^n_t (\eta) - \partial_{\rho} \varphi_{V'}
    (\rho' (\lambda_0)) \int_0^t v^n_s (\triangle \eta) \tmop{ds} \Big)^2
    \Big] & \lesssim & X + Y
  \end{eqnarray*}
  with
  \begin{eqnarray*}
    X & = & \mathbb{E} \Big[ \Big( \int_0^t \frac{1}{2} \sum_k n^{- 1 / 2}
    \Delta_n \eta (n^{- 1} k + c_n s) \times \\
    && \hspace{100pt} \times ( V' (u^n_s (k)) - \varphi_{V'} ( \rho' (\lambda_0) ) - \partial_{\rho} \varphi_{V'} (\rho' (\lambda_0)) (u_s^n (k) - \rho' (\lambda_0)) ) \mathd s^{} \Big)^2
    \Big]\\
    & \lesssim & \frac{1}{n^2} \| (r, k) \mapsto \triangle_n \eta (n^{- 1} k
    + c_n r) \|_{L^2 ([0, T] \times \mathbb{Z})}^2 \sup_{\lambda}
    \tmop{var}_{\lambda} (V')
  \end{eqnarray*}
  by the first order Boltzmann-Gibbs principle, Theorem \ref{thm:first order
  BG}, with $\ell = n$. Now we have
  \[ \| (r, k) \mapsto \triangle_n \eta (n^{- 1} k + c_n r) \|_{L^2 ([0, T]
     \times \mathbb{Z})}^2 \lesssim n \| \eta \|_{H^3}^2, \]
  so that $X \lesssim n^{- 1}$ goes to zero. The remaining contribution is
  \begin{align*}
     Y & =\mathbb{E} \Big[ \Big( \int_0^t \frac{1}{2} \sum_k n^{- 1 / 2} [\Delta_n \eta (n^{- 1} k + c_n s) - \triangle_{} \eta (n^{- 1} k + c_n s)] \times \\
     &\hspace{260pt} \times \partial_{\rho} \varphi_{V'} (\rho' (\lambda_0)) (u_s^n (k) - \rho' (\lambda_0))) \mathd s^{} \Big)^2 \Big] \\
     & \leqslant t \int_0^t n^{- 1} \sum_k \mathbb{E} \left[ \left( [\Delta_n
     \eta (n^{- 1} k + c_n s) - \triangle_{} \eta (n^{- 1} k + c_n s)] \left.
     \partial_{\rho} \varphi_{V'} (\rho' (\lambda_0)) (u_s^n (k) - \rho'
     (\lambda_0)) \right) \mathd s^{} \right)^2 \right] \\
     & \lesssim n^{- 3} \| \eta \|_{H^3}^2,
  \end{align*}
  which also vanishes for $n \rightarrow \infty$. The same arguments show that
  also
  \[ \hat{S}_t (\eta) = \frac{\partial_{\rho} \varphi_{V'} (\rho'
     (\lambda_0))}{2} \int_{0^{}}^t \hat{v}_s (\Delta \eta) \tmop{ds}. \]
   To see that
  $\hat{\mathcal{A}} = - (\mathcal{A}_T -\mathcal{A}_{T - t})$ it suffices to
  note that for fixed $n \in \mathbb{N}$ we have $\hat{A}^n_t (\eta) = - (A^n_T (\eta) - A^n_{T - t} (\eta))$ by definition, and the equality carries over to the limit $n\to \infty$. 
  Thus, v. is established as well and in particular $(v, \mathcal{A}) \in
  \mathcal{Q}$.
  
  It remains to show point vi. about $\mathcal{A}$ being equal to the Burgers
  nonlinearity. For $\delta > 0$ define
  \[ \mathcal{A}^{\delta}_t (\eta) \assign  \frac{(\partial_{\rho \rho} \varphi_{V'}) (\rho' (\lambda_0))}{2} \int_0^t \mathd r \int_{\mathbb{R}}
     \mathd x v_s (i_{\delta} (x))^2 \nabla \eta (x), \]
  where $i_{\delta} (x) (y) = \delta^{- 1} 1_{[x, x + \delta)}
  (y)$ which by Lemma~\ref{lem:Burgers nonlinearity} is allowed as a test
  function. The proof is complete once we show that $\mathcal{A}^{\delta}
  (\eta)$ converges in probability to $\mathcal{A} (\eta)$ as $\delta
  \rightarrow 0$. Now we have seen in Section~\ref{sec:antisymmetric} that
  \[ A^n_t (\eta) = \int_0^t \sum_k (\nabla^{(2)}_n \eta) (n^{- 1} k + c_n r)
     (V' (u^n_r (k)) + \partial_{\rho} \varphi_{V'} (\rho' (\lambda_0)) (\rho'
     (\lambda_0)) (u^n_r (k) - \rho' (\lambda_0))) \mathd r + O (n^{- 1}), \]
  so that the second order Boltzmann-Gibbs principle, Theorem~\ref{thm:second
  order BG}, applied with $\ell = \delta n$ gives
  \begin{align*}
     & \lim_{n \rightarrow \infty} \mathbb{E}_{{\mu}_{\lambda_0}} \Big[
     \Big( A^n_t (\eta) - \int_0^t \sum_k (\nabla^{(2)}_n \eta) (n^{- 1} k +
     c_n r) \tau_k \frac{(\partial_{\rho \rho} \varphi_{V'}) (\rho'
     (\lambda_0))}{2} \CQ_{\lambda_0} (\delta n ; u^n_r) \Big)^2
     \Big] \\
     &\qquad \lesssim_{\eta} \lim_{n \rightarrow \infty} \Big( \frac{\delta n}{n^2} + \frac{t}{\delta^2 n^2} \Big) t n \sup_{\lambda} \tmop{var}_{\lambda} (V') \lesssim \delta t.
  \end{align*}
  So since $(A^n_t (\eta))_n$ converges to $\mathcal{A}_t (\eta)$, it suffices to show that
%
%
  \[ \Big( \frac{(\partial_{\rho \rho} \varphi_{V'}) (\rho' (\lambda_0))}{2}
     \Big)^{- 1} \mathcal{A}^{\delta}_t = \lim_{n \rightarrow \infty}
     \int_0^t \mathd r \sum_k \tau_k \mathcal{Q}_{\lambda_0} (\delta n ;
     u^n_r) (\nabla^{(2)}_n \varphi) (n^{- 1} k + c_n r) \]
  to complete the proof. Now recall that $\CQ_{\lambda_0} (\ell ; u) =
  (\bar{u}^{\ell} - \rho' (\lambda_0))^2 - \frac{\sigma^2_{\lambda_0}}{\ell}$
  with $\bar{u}^{\ell} = \ell^{- 1} \sum_{i = 0}^{\ell - 1} u (i)$ and that $v^n_t = \sum_k n^{- 1 / 2} (u^n_t (k) - \rho' (\lambda_0)) \delta_{n^{-
     1} (k + c_n t)}$.
  Hence,
  \[ n \tau_k \mathcal{Q}_{\lambda_0} (\delta n ; u^n_t) = v^n_t
     (i_{\delta} (n^{- 1} k + c_n t))^2 -
     \frac{\sigma^2_{\lambda_0}}{\delta}, \]
  and then
  \begin{align}
     &\int_0^t  \sum_k \tau_k \mathcal{Q} (\delta n ; u^n_r) (\nabla^{(2)}_n \varphi) (n^{- 1} k + c_n r) \mathd r\notag \\
     &\qquad = \int_0^t \sum_k n^{- 1} v^n_r (i_{\delta} (n^{- 1} k + c_n r))^2 (\nabla^{(2)}_n \varphi) (n^{- 1} k + c_n r) \mathd r\notag \\
     &\qquad \rightarrow \int_0^t \mathd r \int_{\mathbb{R}} \mathd x v_s (i_{\delta} (x))^2 \nabla \varphi (x), \label{eq:approxNonlin} 
  \end{align}
  where the last convergence is in $L^2$ and can be proven as follows. For
  simplicity we only treat the case $\delta = 1$, $x = 0$. Let $i_{}^m
  (0)$ be a smooth approximation to $i (0) \assign i_1 (0)$, to be specified below. Since $v_s$ is a multiple of the white noise we have
  \[ \mathbb{E}_{{\mu}_{\lambda_0}} [\{ v_s (i_{} (0)) - v_s (i_{}^m (0))
     \}^2] \lesssim \| i_{} (0) - i_{}^m (0) \|_{L^2 (\mathbb{R})}^2 . \]
  Assume now that $i^m (0) (y) = i (0) (y) = 1$ on $[0, 1]$, and that $i^m
  (0)$ is zero outside $[- 1 / m, 1 + 1 / m]$ and $i^m (0) (y) \in [0, 1]$ for
  all $y \in \mathbb{R}$. Then
     \begin{align*}
   \mathbb{E}_{{\mu}_{\lambda_0}} [\{ v^n_s (i_{} (0)) - v^n_s (i_{}^m
   (0)) \}^2]
   &= \mathbb{E}_{{\mu}_{\lambda_0}} \Big[ \Big( \sum_k
   \frac{1}{\sqrt{n}} [u (n^2 t, k) - \rho' (\lambda_0)] [i (0) - i^m (0)]
   (n^{- 1} \{ k + c_n t \}) \Big)^2 \Big] \\
   &= \sum_k \frac{1}{n} \mathbb{E}_{{\mu}_{\lambda_0}} [u (n^2 t, k) -
     \rho' (\lambda_0)]^2 [i (0) - i^m (0)]^2 (n^{- 1} \{ k + c_n t \})
     \lesssim \frac{1}{m},
     \end{align*}
  since the support of $\{ i (0) - i^m (0) \} (n^{- 1} (\cdot + c_n t))$ has
  size of order $n / m$. These two bounds give the convergence
  in~(\ref{eq:approxNonlin}).
\end{proof}

\appendix\section{Infinite-dimensional SDEs}\label{app:periodic approximation}

Define for $r > 0$
\[ \tilde{\chi}_r \assign \Big\{ u \in \mathbb{R}^{\mathbb{Z}} : \| u \|_r
   \assign \Big( \sum_j | u (j) |^2 | j |^{- r} \Big)^{1 / 2} < \infty
   \Big\}, \]
where we simplify the notation by setting $| 0 |^{- r} = 1$, and then $\chi_r$
as the closure of all $u \in \tilde{\chi}_r$ that only have finitely many
non-zero entries. Clearly $\tilde{\chi}_r$ is a Banach space, and therefore
$\chi_r$ as well, and by definition $\chi_r$ is also separable. We have an
intrinsic description of $\chi_r$ as
\[ \chi_r = \Big\{ u \in \tilde{\chi}_r : \lim_{N \rightarrow \infty} \sum_{|
   j | > N} | u (j) |^2 | j |^{- r} = 0 \Big\} . \]
We then define
\[ \mathcal{U}_r = \Big\{ u : [0, T] \times \Omega \rightarrow \chi_r,
   \hspace{10pt} u \text{ is almost surely continuous and } \mathbb{E} [\sup_{t \leqslant T} \| u_t \|_r^2] < \infty
   \Big\}, \]
which is also a Banach spaces if we identify processes that are
indistinguishable.

\begin{lemma}\label{lem:picard}
  Let $r \in \mathbb{R}$, let $(v_t)_{t \in [0, T]}$ be a
  stochastic process with continuous trajectories in
  $\mathbb{R}^{\mathbb{Z}}$ (equipped with the product topology), and assume that
  $\mathbb{E} [ \int_0^T \| v_t \|_r^2 \mathd r ] < \infty$. Let
  $\gamma \in \mathcal{U}_r$ and let $u_0$ be a random variable with values in
  $\mathbb{R}^{\mathbb{Z}}$ such that $\mathbb{E} [\| u_0 \|_r^2] <
  \infty$. Let $V'$ be Lipschitz-continuous and define
  \[ \Gamma (v)_t (j) \assign u_0 (j) + \int_0^t \Big( \frac{1}{2} \Delta_D
     [V' (v_t)] (j) + 2 \sqrt{\varepsilon} \nabla^{(2)}_D [V' (v_t)] (j)
     \Big) \mathd t + \gamma_t (j) . \]
  Then
  \[ \mathbb{E} [\sup_{t \leqslant T} \| \Gamma (v)_t \|_r^2] \lesssim
     \mathbb{E} [\| u_0 \|_r^2] + T\mathbb{E} \Big[ \int_0^T \| v_t \|_r^2
     \mathd r \Big] +\mathbb{E} [\sup_{t \leqslant T} \| \gamma_t \|_r^2] .
  \]
\end{lemma}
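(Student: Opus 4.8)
The plan is to decompose $\Gamma(v)_t$ into its three constituent pieces --- the constant term $u_0$, the drift integral, and the noise term $\gamma_t$ --- and to bound each of the resulting contributions to $\|\Gamma(v)_t\|_r^2$ separately. The triangle inequality in $\tilde{\chi}_r$ together with $(a+b+c)^2 \leqslant 3(a^2+b^2+c^2)$ immediately reduces matters to controlling
\[
   \mathbb{E}\Big[ \sup_{t \leqslant T} \Big\| \int_0^t \Big( \tfrac{1}{2} \Delta_D [V'(v_s)] + 2\sqrt{\varepsilon}\, \nabla^{(2)}_D [V'(v_s)] \Big) \mathd s \Big\|_r^2 \Big],
\]
since the contributions of $u_0$ and of $\gamma$ are already exactly of the form appearing on the right-hand side of the claimed estimate.

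The first thing I would establish is that the discrete difference operators act boundedly on $\tilde{\chi}_r$ once composed with the Lipschitz function $V'$. Writing $D$ for either $\Delta_D$ or $\nabla^{(2)}_D$, the explicit pointwise formulas together with the Lipschitz bound $|V'(a) - V'(b)| \lesssim |a-b|$ give $|D[V'(w)](j)| \lesssim |w(j+1)| + |w(j)| + |w(j-1)|$ for all $w \in \mathbb{R}^{\mathbb{Z}}$ and $j \in \mathbb{Z}$. Multiplying by $|j|^{-r}$, summing over $j$, re-indexing the two translated sums and using that $\sup_{j \in \mathbb{Z}} |j \pm 1|^{-r}/|j|^{-r} < \infty$ for every fixed $r \in \mathbb{R}$ (with the convention $|0|^{-r} = 1$), one arrives at $\|D[V'(w)]\|_r \lesssim_r \|w\|_r$; in particular the integrand above lies in $\tilde{\chi}_r$ for each fixed $s$, with norm $\lesssim_r \|v_s\|_r$.

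With this operator bound in hand, I would finish by applying Minkowski's integral inequality and then Cauchy--Schwarz in the time variable:
\[
   \Big\| \int_0^t D[V'(v_s)] \mathd s \Big\|_r^2 \leqslant \Big( \int_0^T \|D[V'(v_s)]\|_r \mathd s \Big)^2 \leqslant T \int_0^T \|D[V'(v_s)]\|_r^2 \mathd s \lesssim_r T \int_0^T \|v_s\|_r^2 \mathd s.
\]
The right-hand side no longer depends on $t$, so the bound survives the supremum over $t \in [0,T]$, and taking expectations (using the hypothesis $\mathbb{E}[\int_0^T \|v_t\|_r^2 \mathd t] < \infty$) yields precisely the middle term on the right of the claim. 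I do not expect any genuine obstacle: the only point needing a little care is the slow variation of the weights $|j|^{-r}$ under unit shifts, which holds for every fixed $r$ since $|j+1|/|j| \to 1$ as $|j| \to \infty$, leaving only finitely many indices near the origin to be inspected by hand; everything else is routine. The complementary facts that $\Gamma(v)$ has continuous trajectories and actually takes values in $\chi_r$ rather than merely $\tilde{\chi}_r$ are not needed for the stated estimate, but would be verified in the same way when iterating $\Gamma$.
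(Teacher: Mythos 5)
Your proposal is correct and follows essentially the same route as the paper: split $\Gamma(v)_t$ into the three terms, control the drift via the Lipschitz bound $|V'(a)-V'(b)|\lesssim|a-b|$ (so the discrete operators applied to $V'(v_s)$ are dominated by neighboring values of $|v_s|$), apply Cauchy--Schwarz in time, and absorb the unit shifts using the comparability of the weights $|j|^{-r}$. The only cosmetic difference is that the paper bounds $\mathbb{E}[\sup_{t\leqslant T}|\Gamma(v)_t(j)|^2]$ componentwise and then sums the weighted series, whereas you work directly with the $\lVert\cdot\rVert_r$ norm via Minkowski's integral inequality; the estimates involved are the same.
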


\begin{proof}
  Simply estimate
  \begin{align*}
   \mathbb{E} [\sup_{t \leqslant T} | \Gamma (v)_t (j) |^2]
   &\lesssim
   \mathbb{E} [| u_0 (j) |^2] + T \int_0^T \mathbb{E} \Big[ \Big|
   \frac{1}{2} \Delta_D [V' (v_t)] (j) + 2 \sqrt{\varepsilon} \nabla^{(2)}_D
   [V' (v_t)] (j) \Big|^2 \Big] \mathd t \\
   & \quad +\mathbb{E} [\sup_{t \leqslant T} | \gamma_t (j) |^2] \\
   &\lesssim \mathbb{E} [| u_0 (j) |^2] + T \int_0^T \sum_{| k - j |
   \leqslant 1} \mathbb{E} [| v_t (k) |^2] \mathd t +\mathbb{E} [\sup_{t
   \leqslant T} | \gamma_t (j) |^2],
  \end{align*}
  and therefore
  \[ \mathbb{E} [\sup_{t \in \leqslant T} \| \Gamma (v)_t \|_r^2] \lesssim
     \mathbb{E} [\| u_0 \|_r^2] + T\mathbb{E} \Big[ \int_0^T \| v_t \|_r^2
     \mathd t \Big] +\mathbb{E} [\sup_{t \leqslant T} \| \gamma_t \|_r^2] .
  \]
  
\end{proof}

\begin{theorem}\label{thm:periodic approximation}
   Let $\gamma \in \mathcal{U}_r$, let
  $u_0$ be a random variable with values in $\chi_r$ such that $\mathbb{E}
  [\| u_0 \|_r^2] < \infty$, and let $V'$ be Lipschitz-continuous. Then there exists a unique solution $u
  \in \mathcal{U}_{r}$ to the equation
  \[ u_t (j) = u_0 (j) + \int_0^t \Big( \frac{1}{2} \Delta_D [V' (u_t)] (j) +
     \sqrt{\varepsilon} \nabla^{(2)}_D [V' (u_t)] (j) \Big) \mathd t +
     \gamma_t (j), \hspace{2em} j \in \mathbb{Z}. \]
  Moreover, if $\tilde{\gamma} \in \mathcal{U}_r$ and $\tilde{u}_0 \in \chi_r$
  with $\mathbb{E} [\| \tilde{u}_0 \|_r^2] < \infty$ is another set of data
  and $\tilde{u}$ denotes the corresponding solution, then
  \[ \mathbb{E} [\sup_{t \leqslant T} \| u_t - \tilde{u}_t \|_r^2] \lesssim
     \mathbb{E} [\| u_0 - \tilde{u}_0 \|_r^2] +\mathbb{E} [\sup_{t \leqslant
     T} \| \gamma_t - \tilde{\gamma}_t \|_r^2] . \]
  In particular, if $u_0^N$ is the periodic extension of $u_0 |_{[- N / 2, N /
  2)}$ to $\mathbb{Z}$ and similarly for $\gamma^N$, and $u^N$ is the
  corresponding solution, then
  \[ \lim_{N \rightarrow \infty} \mathbb{E} [\sup_{t \leqslant T} \| u_t -
     u^N_t \|_{r'}^2] = 0 \]
  for all $r' > r$.
\end{theorem}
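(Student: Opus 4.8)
The plan is to obtain existence and uniqueness by a Picard fixed-point argument in (the adapted processes of) $\mathcal{U}_r$, to read off the stability estimate by a Gr\"onwall argument, and finally to deduce the periodic approximation from stability together with a deterministic tail bound for the periodized data. First I would verify that the map $\Gamma$ of Lemma~\ref{lem:picard} sends $\mathcal{U}_r$ into itself. Because $V'$ is Lipschitz and $\Delta_D$, $\nabla^{(2)}_D$ are discrete difference operators that annihilate constants, one has the pointwise bound $|\tfrac12\Delta_D[V'(v)](j)+2\sqrt{\varepsilon}\,\nabla^{(2)}_D[V'(v)](j)|\lesssim |v(j-1)|+|v(j)|+|v(j+1)|$, so the drift is a bounded operator on $\tilde\chi_r$ that maps $\chi_r$ to $\chi_r$ with $\|\mathrm{drift}(v)\|_r\lesssim\|v\|_r$ (using that the shift is bounded on $\tilde\chi_r$). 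Hence $t\mapsto\int_0^t\mathrm{drift}(v_s)\,\mathrm{d}s$ is a continuous $\chi_r$-valued Bochner integral, so $\Gamma(v)$ has continuous paths in $\chi_r$, and Lemma~\ref{lem:picard} (with $\mathbb{E}[\int_0^T\|v_t\|_r^2\,\mathrm{d}t]\le T\,\mathbb{E}[\sup_t\|v_t\|_r^2]$) gives $\mathbb{E}[\sup_{t\le T}\|\Gamma(v)_t\|_r^2]<\infty$. For two adapted $v,\tilde v\in\mathcal{U}_r$ sharing the data $(u_0,\gamma)$, the same computation with $\|\mathrm{drift}(v_s)-\mathrm{drift}(\tilde v_s)\|_r\lesssim\|v_s-\tilde v_s\|_r$ yields
\[
   \mathbb{E}\big[\sup_{t\le\tau}\|\Gamma(v)_t-\Gamma(\tilde v)_t\|_r^2\big]\le C\tau^2\,\mathbb{E}\big[\sup_{t\le\tau}\|v_t-\tilde v_t\|_r^2\big],\qquad \tau\in[0,T],
\]
so $\Gamma$ is a contraction on $[0,\tau]$ once $C\tau^2<1$. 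Banach's fixed-point theorem gives a unique solution on $[0,\tau]$; restarting from $u_\tau$ (which is $\mathcal{F}_\tau$-measurable with $\mathbb{E}[\|u_\tau\|_r^2]<\infty$) and iterating finitely many times extends it to $[0,T]$, the concatenation remaining in $\mathcal{U}_r$.

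For the stability estimate I would subtract the integral equations for the solutions $u,\tilde u$ of data $(u_0,\gamma)$ and $(\tilde u_0,\tilde\gamma)$, use $\|\mathrm{drift}(u_s)-\mathrm{drift}(\tilde u_s)\|_r\lesssim\|u_s-\tilde u_s\|_r$ and Cauchy--Schwarz in time to obtain, with $g(t):=\mathbb{E}[\sup_{s\le t}\|u_s-\tilde u_s\|_r^2]$,
\[
   g(t)\lesssim \mathbb{E}[\|u_0-\tilde u_0\|_r^2]+\mathbb{E}[\sup_{s\le T}\|\gamma_s-\tilde\gamma_s\|_r^2]+T\int_0^t g(s)\,\mathrm{d}s .
\]
Since $g$ is nondecreasing and finite on $[0,T]$, Gr\"onwall's lemma gives the asserted bound; taking $u_0=\tilde u_0$ and $\gamma=\tilde\gamma$ also yields the uniqueness claimed above.

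For the periodic approximation, fix $r'>r$. The crucial point is that the periodized data $u_0^N,\gamma^N$ lie in $\mathcal{U}_{r'}$ and converge there to $(u_0,\gamma)$. Splitting $\|u_0-u_0^N\|_{r'}^2$ into the coordinates $|j|<N/2$ (on which $u_0$ and $u_0^N$ agree) and $|j|\ge N/2$: the genuine tail $\sum_{|j|\ge N/2}|u_0(j)|^2|j|^{-r'}$ vanishes because $u_0\in\chi_r\subset\chi_{r'}$, while for the periodic tail a decomposition into blocks of length $N$, using $\sum_{|k|<N/2}|u_0(k)|^2\lesssim N^{r}\|u_0\|_r^2$ and summing the blocks against the weight of size $(|m|N)^{-r'}$, gives a bound $\lesssim N^{r-r'}\|u_0\|_r^2\to0$ — this is exactly where the strict gain $r'>r$ is used. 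The same two estimates, applied pathwise and uniformly in $t$ and combined with the a.s. relative compactness of $\{\gamma_t:t\in[0,T]\}$ in $\chi_r$ (which forces uniformly small tails) and dominated convergence, show $\mathbb{E}[\sup_t\|\gamma_t-\gamma_t^N\|_{r'}^2]\to0$. Then $u^N\in\mathcal{U}_{r'}$ by the first step, $u\in\mathcal{U}_r\subset\mathcal{U}_{r'}$ solves the same equation in $\chi_{r'}$, and the stability estimate applied in $\chi_{r'}$ bounds $\mathbb{E}[\sup_{t\le T}\|u_t-u_t^N\|_{r'}^2]\lesssim\mathbb{E}[\|u_0-u_0^N\|_{r'}^2]+\mathbb{E}[\sup_t\|\gamma_t-\gamma_t^N\|_{r'}^2]\to0$.

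The argument is essentially routine infinite-dimensional SDE theory; the points that require a little care are checking that $\Gamma$ genuinely preserves the small space $\chi_r$ and produces $\chi_r$-continuous paths (not merely paths in $\tilde\chi_r$), and pinning down the power of $N$ in the periodic tail estimate so that the strict gain $r'>r$ is precisely what makes the "in particular" statement go through.
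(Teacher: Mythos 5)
Your proposal is correct and follows essentially the same route as the paper: a Picard/Banach fixed-point argument built on the a priori bound of Lemma~\ref{lem:picard}, Gr\"onwall for the stability estimate, and then the stability estimate in $\chi_{r'}$ combined with convergence of the periodized data to conclude (the paper's own proof is in fact terser and leaves the tail estimate for $u_0^N,\gamma^N$ implicit, which you supply correctly). The only implicit point worth noting is that your block summation over $m$ requires $r'>1$ so that the periodic extension actually lies in $\chi_{r'}$, but this is needed for the statement itself and is satisfied in the paper's application where $r>1$.
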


\begin{proof}
  The existence and uniqueness of the solution $u \in \mathcal{U}_r$ follows
  from a Picard iteration and Gronwall's lemma. If $u$ and $\tilde{u}$ are the
  solutions for different data, then we get as in Lemma~\ref{lem:picard}
  \[ \mathbb{E} [\sup_{t \leqslant T} \| u_t - \tilde{u}_t \|_r^2] \lesssim
     \mathbb{E} [\| u_0 - \tilde{u}_0 \|_r^2] + T\mathbb{E} \Big[ \int_0^T
     \| u_t - \tilde{u}_t \|_r^2 \mathd r \Big] +\mathbb{E} [\sup_{t
     \leqslant T} \| \gamma_t - \tilde{\gamma}_t \|_r^2], \]
  and thus the claim follows from Gronwall's lemma.
\end{proof}

\begin{remark}
  In our setting we have $\gamma (j) = W (j + 1) - W (j)$ for an independent. family
  of standard Brownian motions $(W (j))_{j \in \mathbb{Z}}$, and therefore
  \[ \mathbb{E} [\sup_{t \leqslant T} \| \gamma_t \|_r^2] \leqslant \sum_j
     \mathbb{E} [\sup_{t \leqslant T} | W_t (j + 1) - W_t (j) |^2] | j |^{-
     r} < \infty \]
  for all $r > 1$. Similarly, since the coordinates under the measure ${\mu}_{\lambda}$ have
  finite second moments,
  \[ \mathbb{E} [\| u_0 \|_r^2] = \sum_j \mathbb{E} [| u_0 (j) |^2] | j |^{-
     r} < \infty \]
  for $r > 1$. Therefore, our solution takes values in $\chi_r$ whenever $r > 1$.
\end{remark}

\begin{remark}
  Our polynomial choice of weights is different from the exponential choice of
  \cite{bib:funakiSpohn, bib:giacominOllaSpohn}. The reason why
  we prefer it is that the polynomial weights give us a more accurate
  description of the growth of the solution (which will be at most
  logarithmic because under the stationary measure the coordinates are
  identically distributed with finite exponential moments), and more importantly we know
  that if $u \in \chi_r$, then
  \[ \sum_j u (j) \delta_j \]
  defines a tempered distribution and not just a distribution, which is not
  obvious when working with exponential weights.
\end{remark}

\section{Proof of the equivalence of ensembles}\label{app:equiv-ensem-proof}

Here we provide the proof of Proposition~\ref{prop:equiv-ensem}. Throughout this appendix we will not deal
with any temporal dependencies and therefore we will denote spatial
coordinates by subscripts, writing $u_k$ instead of $u (k)$. Let us start by deriving some auxiliary lemmas.

\begin{lemma}\label{lem:Gaussian expansion}
  With the notation of Lemma \ref{lem:LLT}, we can estimate uniformly in $\lambda \in
  \mathbb{R}$
  \[ \left| r_{\lambda, N} (u) - r_{\lambda, N} (0) + \frac{N^{- 1 / 2}}{2
     \sqrt{2 \pi}} \frac{m_{3, \lambda}}{\sigma^3_{\lambda}} x + \frac{x^2}{2
     \sqrt{2 \pi}} \right| \lesssim | u |^3 + N^{- 1 / 2} | u |^2 + N^{- 1} |
     u | . \]
\end{lemma}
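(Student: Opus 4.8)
The plan is to reduce the statement to three separate second-order Taylor expansions, one for each building block of $r_{\lambda,N}=r^0+N^{-1/2}r^1_\lambda+N^{-1}r^2_\lambda$, and to check by hand that the linear and quadratic correction terms appearing in the claim cancel exactly the corresponding terms of the expansion. Since $N\geq 1$, it suffices to split
\[
   r_{\lambda,N}(u)-r_{\lambda,N}(0)+\frac{N^{-1/2}}{2\sqrt{2\pi}}\frac{m_{3,\lambda}}{\sigma^3_\lambda}u+\frac{u^2}{2\sqrt{2\pi}}=A(u)+N^{-1/2}B_\lambda(u)+N^{-1}C_\lambda(u),
\]
with $A(u)=r^0(u)-r^0(0)+\tfrac{u^2}{2\sqrt{2\pi}}$, $B_\lambda(u)=r^1_\lambda(u)-r^1_\lambda(0)+\tfrac{1}{2\sqrt{2\pi}}\tfrac{m_{3,\lambda}}{\sigma^3_\lambda}u$ and $C_\lambda(u)=r^2_\lambda(u)-r^2_\lambda(0)$, and then to prove $|A(u)|\lesssim|u|^3$, $|B_\lambda(u)|\lesssim u^2$ and $|C_\lambda(u)|\lesssim|u|$ uniformly in $\lambda$.

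For $A$ I would use that $r^0$ is smooth and even with $(r^0)'(0)=0$ and $(r^0)''(0)=-1/\sqrt{2\pi}$, so that $A$ is the second-order Taylor remainder of $r^0$ at $0$ and $|A(u)|\lesssim|u|^3$ on $|u|\le 1$, while for $|u|\ge 1$ the bound is immediate from $0\le r^0(u)\le r^0(0)$ and $u^2\le|u|^3$. For $B_\lambda$, recall that $r^1_\lambda=a_\lambda\,r^0H_3$ with $a_\lambda:=m_{3,\lambda}/(6\sigma^3_\lambda)$; since $H_3(0)=0$ and $(r^0H_3)'(0)=r^0(0)H_3'(0)=-3/\sqrt{2\pi}$, the correction $\tfrac{1}{2\sqrt{2\pi}}\tfrac{m_{3,\lambda}}{\sigma^3_\lambda}u$ equals precisely $-a_\lambda(r^0H_3)'(0)\,u$, so $B_\lambda=a_\lambda\bigl[(r^0H_3)(u)-(r^0H_3)(0)-(r^0H_3)'(0)u\bigr]$ is $a_\lambda$ times a second-order Taylor remainder of the smooth function $r^0H_3$. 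Bounding its second derivative on $[-1,1]$ and bounding $r^0H_3$ (and its linear part) globally via the Gaussian decay of $r^0$ then gives $|B_\lambda(u)|\lesssim|a_\lambda|u^2\lesssim u^2$. Likewise $r^2_\lambda=r^0(b_\lambda H_4+c_\lambda H_6)$ for suitable coefficients $b_\lambda,c_\lambda$, and $C_\lambda$ is the increment of a smooth globally bounded function (with derivative bounded on $[-1,1]$), so $|C_\lambda(u)|\lesssim|u|$ by the same split into $|u|\le 1$ and $|u|\ge 1$. Summing the three estimates and using $N\ge 1$ yields the claim.

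The only thing that requires genuine care is the uniformity in $\lambda$: this comes from Lemma~\ref{lem:UB}, which gives $\sup_\lambda\bigl(|m_{3,\lambda}|/\sigma^3_\lambda+|m_{4,\lambda}|/\sigma^4_\lambda+\sigma^2_\lambda+\sigma^{-2}_\lambda\bigr)<\infty$ and hence uniform bounds on $a_\lambda$, $b_\lambda=m_{4,\lambda}/(24\sigma^4_\lambda)-1/8$ and $c_\lambda=(m_{3,\lambda}/\sigma^3_\lambda)^2/72$. Beyond that there is no hard step; the one place not to slip up is the bookkeeping of the Taylor coefficients, i.e. recording that the linear-in-$u$ term of $r_{\lambda,N}$ is produced only by $r^1_\lambda$ (as $r^0$ and $r^2_\lambda$ are even) while the $N$-independent quadratic term comes only from $r^0$.
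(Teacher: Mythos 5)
Your argument is correct and is essentially the paper's own proof: the same decomposition $r_{\lambda,N}=r^0+N^{-1/2}r^1_\lambda+N^{-1}r^2_\lambda$, a second-order Taylor remainder for $r^0$, the observation that the linear correction is exactly $-(r^1_\lambda)'(0)u$ so that the middle term is a first-order remainder bounded by $\|(r^1_\lambda)''\|_\infty u^2$, a Lipschitz bound for $r^2_\lambda$, and Lemma~\ref{lem:UB} for uniformity in $\lambda$. The only cosmetic difference is your separate treatment of $|u|\le 1$ and $|u|\ge 1$, which the paper avoids by using that the relevant derivatives (Gaussian times polynomial) are globally bounded.
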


\begin{proof}
  We perform a Taylor expansion. Clearly
  \[
      \left| r^0 (u) - r^0 (0) + \frac{1}{2 \sqrt{2 \pi}} u^2 \right| = \left| r^0 (x) - r^0 (0) - (r^0)' (0) u - \frac{1}{2} (r^0)'' (0) u^2
     \right| 
     \lesssim | u |^3 \]
  and
  \[
     N^{- 1 / 2} \left| r^1_{\lambda} (u) - r^1_{\lambda} (0) +
     \frac{1}{2 \sqrt{2 \pi}} \frac{m_{3, \lambda}}{\sigma^3_{\lambda}} u
     \right| = N^{- 1 / 2} | r^1_{\lambda} (u) - r^1_{\lambda} (0) - (r^1_{\lambda})'
     (0) u |  \lesssim N^{- 1 / 2} \| (r^1_{\lambda})'' \|_{\infty} u^2, \]
     and it easily follows from Lemma \ref{lem:UB} that $\|
  (r^1_{\lambda})'' \|_{\infty}$ is uniformly bounded in $\lambda$. Finally,
  \[ N^{- 1} | (r^2_{\lambda} (u) - r^2_{\lambda} (0)) | \leqslant N^{- 1} \|
     (r^2_{\lambda})' \|_{\infty} | u |, \]
     and again by Lemma \ref{lem:UB} the term $\|
  (r^2_{\lambda})' \|_{\infty}$ is uniformly bounded in $\lambda$.
\end{proof}

\begin{lemma}
  \label{lem:equiv ensem}Let $\ell \leqslant N / 2$ and let $F \in L^2
  ({\mu}_{h' (\rho)})$ depend only on $u_0, \ldots, u_{\ell - 1}$. Write
  \[ \psi_F (N, \rho) = E_{\lambda_0} [F| \bar{u}^N = \rho], \hspace{2em}
     \varphi_F (\rho) = E_{h' (\rho)} [F] \]
  (recall that $\bar{u}^N = N^{- 1} u^N = N^{- 1} \sum_{k = 0}^{N - 1} u_k$).
  Then
  \begin{align}\label{eq:equiv ensem lhs} \nonumber
     & \left| \psi_F (N, \rho) - \left( 1 +
    \frac{\ell}{2 N} \right) \varphi_F (\rho) - \frac{1}{2 N} \frac{m_{3, h'
    (\rho)}}{\sigma^4_{h' (\rho)}} E_{h' (\rho)} [F (u^{\ell} - \ell \rho)] +
    \frac{1}{2 N \sigma_{h' (\rho)}^2} E_{h' (\rho)} [F (u^{\ell} - \ell
    \rho)^2] \right| \\
    & \hspace{50pt} \lesssim \left( \frac{\ell}{N} \right)^{3 / 2} (\tmop{var}_{h' (\rho)} (F))^{1 / 2}.
  \end{align}
\end{lemma}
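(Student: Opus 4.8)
The plan is to evaluate $\psi_F(N,\rho)$ by an exact conditioning formula, insert the uniform local limit theorem (Lemma~\ref{lem:LLT}) together with the Gaussian Taylor expansion (Lemma~\ref{lem:Gaussian expansion}), read off the three main terms appearing in~\eqref{eq:equiv ensem lhs}, and finally estimate the remainder in $L^2$ with the uniform moment bounds of Lemma~\ref{lem:UB}. A useful preliminary reduction: the left-hand side of~\eqref{eq:equiv ensem lhs} is linear in $F$ and vanishes whenever $F$ is constant, since for $F\equiv c$ one has $\psi_F(N,\rho)=\varphi_F(\rho)=c$, $E_{h'(\rho)}[F(u^\ell-\ell\rho)]=0$ and $E_{h'(\rho)}[F(u^\ell-\ell\rho)^2]=c\,\ell\,\sigma^2_{h'(\rho)}$, so the combination collapses to $|c-c-c\tfrac{\ell}{2N}+c\tfrac{\ell}{2N}|=0$. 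Hence we may replace $F$ by $F-\varphi_F(\rho)$, which changes neither the left-hand side nor $\mathrm{var}_{h'(\rho)}(F)$, and assume from now on that $E_{h'(\rho)}[F]=0$, so that in particular $E_{h'(\rho)}[|F|]\le(\mathrm{var}_{h'(\rho)}(F))^{1/2}$.

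Write $\lambda:=h'(\rho)$, $\sigma^2:=\sigma^2_\lambda$, and let $q_M$ be the density of $\sum_{i=0}^{M-1}(u(i)-\rho)$ under $\mu_\lambda$ (under which the coordinates have mean $\rho$). Splitting $u^N=u^\ell+\tilde S$ with $\tilde S:=\sum_{i=\ell}^{N-1}u(i)$ independent of $F$, a Bayes computation together with the $\lambda$-independence of the conditional law recorded in~\eqref{eq:conditional density} gives the exact identity
\[
  \psi_F(N,\rho)=E_\lambda[F\,|\,u^N=N\rho]=\frac{E_\lambda[F\,q_{N-\ell}(\ell\rho-u^\ell)]}{q_N(0)} .
\]
Using the normalisation $q_M(y)=(M\sigma^2)^{-1/2}f^M_\lambda\big(y/\sqrt{M\sigma^2}\big)$ this turns into
\[
  \frac{q_{N-\ell}(\ell\rho-u^\ell)}{q_N(0)}=\sqrt{\tfrac{N}{N-\ell}}\;\frac{f^{N-\ell}_\lambda(w)}{f^N_\lambda(0)},\qquad w:=-\frac{u^\ell-\ell\rho}{\sqrt{(N-\ell)\sigma^2}} .
\]

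Now I would expand. Since $\ell\le N/2$, $N-\ell$ is comparable to $N$; by Lemma~\ref{lem:LLT}, $f^{N-\ell}_\lambda(w)=r_{\lambda,N-\ell}(w)+O(N^{-3/2})$ and $f^N_\lambda(0)=r_{\lambda,N}(0)+O(N^{-3/2})=\tfrac{1}{\sqrt{2\pi}}\big(1+O(N^{-1})\big)$, the $N^{-1/2}$-term dropping out because $H_3(0)=0$. Combining this with Lemma~\ref{lem:Gaussian expansion} for $r_{\lambda,N-\ell}$ at the point $w$, with the elementary expansions $\sqrt{N/(N-\ell)}=1+\tfrac{\ell}{2N}+O((\ell/N)^2)$ and $\tfrac{1}{N-\ell}=\tfrac{1}{N}+O(\ell/N^2)$, and substituting back $w=-(u^\ell-\ell\rho)/\sqrt{(N-\ell)\sigma^2}$, one obtains
\[
  \frac{q_{N-\ell}(\ell\rho-u^\ell)}{q_N(0)}=1+\frac{\ell}{2N}+\frac{1}{2N}\frac{m_{3,\lambda}}{\sigma^4_\lambda}(u^\ell-\ell\rho)-\frac{1}{2N\sigma^2_\lambda}(u^\ell-\ell\rho)^2+\mathcal R(u^\ell) ,
\]
where $\mathcal R$ collects the deterministic error $O((\ell/N)^2+N^{-3/2})$ from the prefactors and the local limit theorem, the terms $O(|w|^3+N^{-1/2}|w|^2+N^{-1}|w|)$ coming from Lemma~\ref{lem:Gaussian expansion}, and cross-terms of the form $O(\ell/N^2)\big(|u^\ell-\ell\rho|+(u^\ell-\ell\rho)^2\big)$. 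Multiplying by $F$, taking $E_\lambda$, and using $E_\lambda[F]=0$, the left-hand side of~\eqref{eq:equiv ensem lhs} equals precisely $E_\lambda[F\,\mathcal R(u^\ell)]$, which by Cauchy--Schwarz is at most $(\mathrm{var}_\lambda F)^{1/2}\,(E_\lambda[\mathcal R(u^\ell)^2])^{1/2}$.

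It thus remains to check $E_\lambda[\mathcal R(u^\ell)^2]\lesssim(\ell/N)^3$ uniformly in $\lambda=h'(\rho)$, and this is where Lemma~\ref{lem:UB} enters: $u^\ell-\ell\rho$ is a sum of $\ell$ centred i.i.d.\ coordinates all of whose moments are bounded uniformly in $\lambda$, whence $E_\lambda[|u^\ell-\ell\rho|^{2k}]\lesssim\ell^k$ and $E_\lambda[|w|^{2k}]\lesssim(\ell/N)^k$. The worst remainder, $E_\lambda[|w|^6]\lesssim(\ell/N)^3$, fixes the rate, and every other contribution (the $O((\ell/N)^2)$ and $O(N^{-3/2})$ constants, the $N^{-1}|w|^2$- and $N^{-2}|w|^2$-type terms, and the $\ell/N^2$ cross-terms) is of the same order or smaller once $\ell\le N$ is used. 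I expect the only real labour to be exactly this bookkeeping — organising the expansion of $f^{N-\ell}_\lambda(w)/f^N_\lambda(0)$ to the correct order and verifying that no remainder exceeds $(\ell/N)^{3/2}$ in $L^2$ — whereas the two conceptual inputs, the reduction to $E_\lambda[F]=0$ and the uniform-in-$\lambda$ moment control, are what make the final estimate come out in terms of $\mathrm{var}_\lambda(F)$ alone.
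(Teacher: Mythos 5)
Your proposal is correct and takes essentially the same route as the paper's proof: the reduction to $E_{h'(\rho)}[F]=0$, the exact conditional-density formula from~\eqref{eq:conditional density}, the expansion of the ratio $(N-\ell)^{-1/2}f^{N-\ell}_{h'(\rho)}(y)\big/\big(N^{-1/2}f^{N}_{h'(\rho)}(0)\big)$ via Lemma~\ref{lem:LLT} and Lemma~\ref{lem:Gaussian expansion} together with the prefactor expansion $(1-\ell/N)^{-1/2}=1+\tfrac{\ell}{2N}+O((\ell/N)^2)$, and the concluding Cauchy--Schwarz step with the uniform moment bounds of Lemma~\ref{lem:UB} giving $E_{h'(\rho)}[|u^{\ell}-\ell\rho|^{2k}]\lesssim \ell^{k}$. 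The only cosmetic difference is that the paper introduces intermediate constants $\tilde c_1,\tilde c_2$ and bounds $|c_i-\tilde c_i|\lesssim \ell/N^{2}$, which you absorb directly into your cross-term remainder.
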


\begin{proof}
  Note that neither the left hand side nor the right hand side of~(\ref{eq:equiv ensem lhs}) change
  if we add a constant to $F$, so we can suppose without loss of generality
  that $E_{h' (\rho)} [F] = 0$. By~(\ref{eq:conditional density}) above we
  have
  \[ \psi_F (N, \rho) = \int F (u_0, \ldots, u_{\ell - 1}) \frac{p_{h' (\rho)}
     (u_0) \ldots p_{h' (\rho)} (u_{\ell - 1}) p^{\ast (N - \ell)}_{h' (\rho)}
     (N \rho - u^{\ell})}{p_{h' (\rho)}^{\ast N} (N \rho)} \mathd u_0 \ldots
     \mathd u_{\ell - 1}, \]
  and therefore with $c_1 = \frac{1}{2 N} \frac{m_{3, h' (\rho)}}{\sigma^4_{h'
  (\rho)}}$ and $c_2 = - \frac{1}{2 N \sigma_{h' (\rho)}^2}$
  \begin{align*}
     &\psi_F (N, \rho) - \left( 1 + \frac{\ell}{2 N} \right) \varphi_F (\rho) -
     c_1 E_{h' (\rho)} [F (u^{\ell} - \ell \rho)] - c_2 E_{h' (\rho)} [F
     (u^{\ell} - \ell \rho)^2] \\
     & \hspace{50pt} = \int \mathd u_0 \ldots \mathd u_{\ell - 1} F (u_0, \ldots, u_{\ell -
     1}) p_{h' (\rho)} (u_0) \ldots p_{h' (\rho)} (u_{\ell - 1}) \\
     & \hspace{70pt} \times \left( \frac{p^{\ast (N - \ell)}_{h' (\rho)} (N \rho - u^{\ell})}{p_{h' (\rho)}^{\ast N} (N \rho)} - \left( 1 + \frac{\ell}{2 N} \right) - c_1 (u^{\ell} - \ell \rho) - c_2 (u^{\ell} - \ell \rho)^2 \right).
  \end{align*}
  Since $p^{\ast
  (N - \ell)}_{h' (\rho)}$ is the density of $\sum_{k = \ell}^{N - 1} U_k
  \nocomma$, we get
  \[ p^{\ast (N - \ell)}_{h' (\rho)} (N \rho - u^{\ell}) = (N - \ell)^{- 1 /
     2} \sigma_{h' (\rho)}^{- 1} f^{N - \ell}_{h' (\rho)} (y), \]
  for $y = (\ell \rho - u^{\ell}) / \left( \sigma_{h' (\rho)} \sqrt{N - \ell}
  \right)$, and similarly
  \[ p_{h' (\rho)}^{\ast N} (N \rho) = N^{- 1 / 2} \sigma_{h' (\rho)}^{- 1}
     f^N_{h' (\rho)} (0) . \]
  Thus we need to bound
  \begin{align*}
     & \left| \frac{(N - \ell)^{- 1 / 2} f^{N - \ell}_{h' (\rho)} (y)}{N^{- 1 /  2} f^N_{h' (\rho)} (0)} - \left( 1 + \frac{\ell}{2 N} \right) - c_1  (u^{\ell} - \ell \rho) - c_2 (u^{\ell} - \ell \rho)^2 \right| \\
     & \hspace{20pt} = \frac{1}{f^N_{h' (\rho)} (0)} \left| \left( 1 - \frac{\ell}{N} \right)^{- 1 / 2} f^{N - \ell}_{h' (\rho)} (y) - \left( \left( 1 +
     \frac{\ell}{2 N} \right) + c_1 (u^{\ell} - \ell \rho) + c_2 (u^{\ell} -  \ell \rho)^2 \right) f^N_{h' (\rho)} (0) \right| .
  \end{align*}
  The first factor can be simply estimated by $f^N_{h' (\rho)} (0)^{- 1}
  \lesssim 1$, and using Lemma \ref{lem:LLT} we get
  \begin{align*}
     &\left| \left( 1 - \frac{\ell}{N} \right)^{- 1 / 2} f^{N - \ell}_{h'
     (\rho)} (y) - \left( \left( 1 + \frac{\ell}{2 N} \right) + c_1 (u^{\ell}
     - \ell \rho) + c_2 (u^{\ell} - \ell \rho)^2 \right) f^N_{h' (\rho)} (0)
     \right| \\
     & \hspace{50pt} \lesssim (1 + c_1 | u^{\ell} - \ell \rho | + c_2 | u^{\ell} - \ell \rho
     |^2) N^{- 3 / 2} \\
     & \hspace{70pt} + \left| \left( 1 - \frac{\ell}{N} \right)^{- 1 / 2} r_{h' (\rho), N} (y)
     - \left( \left( 1 + \frac{\ell}{2 N} \right) + c_1 (u^{\ell} - \ell \rho)
     + c_2 (u^{\ell} - \ell \rho)^2 \right) r_{h' (\rho), N} (0) \right| . 
   \end{align*}
  The second term on the right hand side is bounded by
  \begin{align*}
   &\left| \left( 1 - \frac{\ell}{N} \right)^{- 1 / 2} r_{h' (\rho), N} (y) -
   \left( \left( 1 + \frac{\ell}{2 N} \right) + c_1 (u^{\ell} - \ell \rho) +
   c_2 (u^{\ell} - \ell \rho)^2 \right) r_{h' (\rho), N} (0) \right| \\
   &\quad \leqslant \left| \left( \left( 1 - \frac{\ell}{N} \right)^{- 1 / 2} - 1 -
   \frac{1}{2} \frac{\ell}{N} \right) r_{h' (\rho), N} (y) \right| \\
   &\qquad + \left| \left( 1 + \frac{\ell}{2 N} \right) r_{h' (\rho), N} (y) -
   \left( \left( 1 + \frac{\ell}{2 N} \right) + c_1 (u^{\ell} - \ell \rho) +
   c_2 (u^{\ell} - \ell \rho)^2 \right) r_{h' (\rho), N} (0) \right|.
  \end{align*}
  The first term on the right hand side is $(g (\ell / N) - g (0) - g' (0)
  \ell / N) r_{h' (\rho), N} (y)$ for $g (u) = (1 - u)^{- 1 / 2}$, so since
  $\ell \leqslant N / 2$ and $r_{h' (\rho), N}$ is uniformly bounded in $y$
  and $\rho$ (recall Lemma \ref{lem:UB}), we get
  \[ \left| \left( \left( 1 - \frac{\ell}{N} \right)^{- 1 / 2} - 1 -
     \frac{1}{2} \frac{\ell}{N} \right) r_{h' (\rho), N} (y) \right| \lesssim
     \left( \frac{\ell}{N} \right)^2 . \]
  For the remaining term, we first choose
  \[ \tilde{c}_1 = \left( 1 + \frac{\ell}{2 N} \right) \frac{r_{h' (\rho), N}
     (0)^{- 1}}{\sigma_{h' (\rho)} \sqrt{N - \ell}} \frac{N^{- 1 / 2}}{2
     \sqrt{2 \pi}} \frac{m_{3, h' (\rho)}}{\sigma^3_{h' (\rho)}}, \hspace{2em}
     \tilde{c}_2 = - \left( 1 + \frac{\ell}{2 N} \right) \frac{r_{h' (\rho),
     N} (0)^{- 1}}{\sigma_{h' (\rho)}^2 (N - \ell)} \frac{1}{2 \sqrt{2 \pi}},
  \]
  for which we get from Lemma~\ref{lem:Gaussian expansion}
  \begin{align*}
   &\left| \left( 1 + \frac{\ell}{2 N} \right) r_{h' (\rho), N} (y) - \left(
   \left( 1 + \frac{\ell}{2 N} \right) + \tilde{c}_1 (u^{\ell} - \ell \rho)
   + \tilde{c}_2 (u^{\ell} - \ell \rho)^2 \right) r_{h' (\rho), N} (0)
   \right| \\
   &\qquad= \left( 1 + \frac{\ell}{2 N} \right) \left| r_{h' (\rho), N} (y) - r_{h'
   (\rho), N} (0) + \frac{N^{- 1 / 2}}{2 \sqrt{2 \pi}} \frac{m_{3, h'
   (\rho)}}{\sigma^3_{h' (\rho)}} y + \frac{1}{2 \sqrt{2 \pi}} y^2 \right| \\
   &\qquad\lesssim | y |^3 + N^{- 1 / 2} | y |^2 + N^{- 1} | y | \\
   &\qquad\lesssim N^{- 3 / 2} \left( 1 + \frac{| \ell \rho - u^{\ell} |^3}{\sigma_{h' (\rho)}^3} \right),
  \end{align*}
  where in the last step we used that $y = (\ell \rho - u^{\ell}) / \left(
  \sigma_{h' (\rho)} \sqrt{N - \ell} \right)$. It remains to control the
  difference between $c_1, \tilde{c}_1$ and $c_2, \tilde{c}_2$. First note
  that
  \[ r_{h' (\rho), N} (0) = r^0 (0) + N^{- 1} r_{h' (\rho)}^2 (0), \]
  so that we make an error of order $N^{- 2}$ when replacing $r_{h' (\rho), N}
  (0)$ by $r^0 (0) = (2 \pi)^{- 1 / 2}$ in $\tilde{c}_1$ and $\tilde{c}_2$.
  Similarly we can replace the factor $(1 - \ell / 2 N)$ by $1$ while making
  an error of order $\ell / N^2$. Finally we can replace $(N - \ell)^{- 1 /
  2}$ and $(N - \ell)^{- 1}$ by $N^{- 1 / 2}$ and $N^{- 1}$ respectively,
  while making an error of order $\ell / N^2$, which proves that
  \[ | c_1 - \tilde{c}_1 | + | c_2 - \tilde{c}_2 | \lesssim \frac{\ell}{N^2},
  \]
  and this concludes the proof.
\end{proof}

\begin{corollary}
  \label{cor:equivalence of ensembles}In the setting of Lemma~\ref{lem:equiv
  ensem} we have
  \[ \left| \psi_F (N, \rho) - \varphi_F (\rho) + \frac{\sigma_{h'
     (\rho)}^2}{2 N} \partial_{\rho \rho} \varphi_F (\rho) \right| \lesssim
     \left( \frac{\ell}{N} \right)^{3 / 2} (\tmop{var}_{h' (\rho)} (F))^{1 /
     2} . \]
\end{corollary}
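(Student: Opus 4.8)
The plan is to deduce the Corollary from Lemma~\ref{lem:equiv ensem} by checking that the three correction terms appearing in the left-hand side of~\eqref{eq:equiv ensem lhs} combine \emph{exactly} into $-\tfrac{\sigma_{h'(\rho)}^2}{2N}\partial_{\rho\rho}\varphi_F(\rho)$. Thus no new estimate is required: the whole argument is a chain-rule computation, and the Corollary is then immediate.

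First I would compute the first two $\lambda$-derivatives of $g(\lambda)\assign E_\lambda[F]$, viewed as a function of $\lambda\in\R$ (recall $F$ depends only on $u_0,\dots,u_{\ell-1}$). Writing $g(\lambda)=e^{-\ell\rho(\lambda)}\int F(u)\,e^{\lambda u^\ell-\sum_{i=0}^{\ell-1}V(u_i)}\,\mathd u_0\cdots\mathd u_{\ell-1}$ with $\rho(\lambda)=\log Z_\lambda$ and $u^\ell=\sum_{i=0}^{\ell-1}u_i$, differentiation under the integral sign gives
\[
   g'(\lambda)=E_\lambda\big[F(u^\ell-\ell\rho'(\lambda))\big],\qquad
   g''(\lambda)=E_\lambda\big[F(u^\ell-\ell\rho'(\lambda))^2\big]-\ell\,\sigma_\lambda^2\,E_\lambda[F],
\]
where we used $\rho''(\lambda)=\sigma_\lambda^2$. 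The interchange of $\partial_\lambda$ and $\int$, and the finiteness of the moments $E_\lambda[F(u^\ell-\ell\rho'(\lambda))^k]$ for $k\le2$, are justified by Cauchy--Schwarz (using $F\in L^2(\mu_\lambda)$) together with the Gaussian-type tails of $p_\lambda$ provided by \assumptionV, locally uniformly in $\lambda$.

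Next I would change variables from $\lambda$ to $\rho$ via $\varphi_F(\rho)=g(h'(\rho))$. Differentiating $\rho'(h'(\rho))=\rho$ once and twice and using $\rho''(\lambda)=\sigma_\lambda^2$ and $\rho'''(\lambda)=m_{3,\lambda}$ (the latter obtained by differentiating $\sigma_\lambda^2=\int(u-\rho'(\lambda))^2p_\lambda(u)\,\mathd u$ in $\lambda$), one gets $h''(\rho)=\sigma_{h'(\rho)}^{-2}$ and $h'''(\rho)=-m_{3,h'(\rho)}\,\sigma_{h'(\rho)}^{-6}$. The chain rule then yields $\partial_{\rho\rho}\varphi_F(\rho)=g''(h'(\rho))\,h''(\rho)^2+g'(h'(\rho))\,h'''(\rho)$, and substituting the formulas for $g',g''$ and multiplying through by $\sigma_{h'(\rho)}^2$ collapses this to
\[
   \sigma_{h'(\rho)}^2\,\partial_{\rho\rho}\varphi_F(\rho)
   =-\ell\,\varphi_F(\rho)+\frac{1}{\sigma_{h'(\rho)}^2}E_{h'(\rho)}\big[F(u^\ell-\ell\rho)^2\big]
   -\frac{m_{3,h'(\rho)}}{\sigma_{h'(\rho)}^4}E_{h'(\rho)}\big[F(u^\ell-\ell\rho)\big].
\]

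Finally, dividing by $2N$ shows that $-\tfrac{\sigma_{h'(\rho)}^2}{2N}\partial_{\rho\rho}\varphi_F(\rho)$ equals precisely $\tfrac{\ell}{2N}\varphi_F(\rho)+\tfrac{1}{2N}\tfrac{m_{3,h'(\rho)}}{\sigma_{h'(\rho)}^4}E_{h'(\rho)}[F(u^\ell-\ell\rho)]-\tfrac{1}{2N\sigma_{h'(\rho)}^2}E_{h'(\rho)}[F(u^\ell-\ell\rho)^2]$, i.e.\ exactly the combination of correction terms subtracted from $\psi_F(N,\rho)-\varphi_F(\rho)$ in the left-hand side of~\eqref{eq:equiv ensem lhs}. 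Hence $\psi_F(N,\rho)-\varphi_F(\rho)+\tfrac{\sigma_{h'(\rho)}^2}{2N}\partial_{\rho\rho}\varphi_F(\rho)$ coincides with the quantity estimated in Lemma~\ref{lem:equiv ensem}, and the claimed bound follows immediately. The only point requiring genuine care is the differentiation under the integral sign in the first step; everything else is bookkeeping.
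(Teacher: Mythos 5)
Your proposal is correct and follows essentially the same route as the paper: one computes $\partial_{\rho}\varphi_F$ and $\partial_{\rho\rho}\varphi_F$ via the chain rule through $\lambda=h'(\rho)$, uses $h''(\rho)=\sigma_{h'(\rho)}^{-2}$ and $h'''(\rho)=-m_{3,h'(\rho)}\sigma_{h'(\rho)}^{-6}$ (the paper gets $\rho'''=m_{3,\lambda}$ via cumulants rather than by differentiating the variance, an immaterial difference), and checks that the three correction terms in Lemma~\ref{lem:equiv ensem} collapse exactly to $-\frac{\sigma_{h'(\rho)}^2}{2N}\partial_{\rho\rho}\varphi_F(\rho)$. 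No gap.
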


\begin{proof}
  Start by noting that
  \[ \partial_{\lambda} (p_{\lambda} (u_0) \ldots p_{\lambda} (u_{\ell - 1}))
     = (u^{\ell} - \ell \rho' (\lambda)) p_{\lambda} (u_0) \ldots p_{\lambda}
     (u_{\ell - 1}), \]
  and therefore
  \[ \partial^2_{\lambda \lambda} (p_{\lambda} (u_0) \ldots p_{\lambda}
     (u_{\ell - 1})) = ((u^{\ell} - \ell \rho' (\lambda))^2 - \ell \rho''
     (\lambda)) p_{\lambda} (u_0) \ldots p_{\lambda} (u_{\ell - 1}) . \]
  So using that $\varphi_F (\rho) = E_{h' (\rho)} [F]$, we get
  \begin{align} \label{eq:phiF derivative} \nonumber
     \partial_{\rho} \varphi_F (\rho) & = E_{h' (\rho)} [(u^{\ell} - \ell \rho) F] h'' (\rho), \\
 \partial^2_{\rho \rho} \varphi_F (\rho) & = E_{h' (\rho)} [(u^{\ell} - \ell \rho)^2 F] (h'' (\rho))^2 - \ell E_{h' (\rho)} [F] h'' (\rho) + E_{h' (\rho)} [(u^{\ell} - \ell \rho) F] h''' (\rho) .
  \end{align}
  Since $h'$ is the inverse function of $\rho'$, the derivatives of $h'$ are
  given by
  \[ h'' (u) = \frac{1}{\rho'' (h' (u))}, \hspace{2em} h''' (u) = -
     \frac{1}{\rho'' (h' (u))^2} \rho''' (h' (u)) h'' (u) = - \frac{\rho'''
     (h' (u))}{\rho'' (h' (u))^3} = - \frac{m_{3, h' (u)}}{\sigma_{h' (u)}^6},
  \]
  where in the last step we used that
  \[ \rho (\lambda) = \log \int e^{\lambda u - V (u)} \mathd u =
     \log E_{\lambda_0} [e^{(\lambda - \lambda_0) u_0}] + \rho (\lambda_0)
  \]
  for any $\lambda_0$ and thus
  \[ \rho^{(m)} (\lambda) = \partial^m_{\gamma} E_{\lambda_0} [e^{\gamma u_0}]
     |_{\gamma = 0} = \kappa_{m, \lambda}, \]
  where $\kappa_m (\lambda)$ is the $m$-th cumulant, and that $\kappa_{3,
  \lambda} = m_{3, \lambda}$. Plugging this into~(\ref{eq:phiF derivative})
  (and using $\rho'' (\lambda) = \sigma_{\lambda}^2$) we get
  \[ \partial_{\rho} \varphi_F (\rho) = \frac{E_{h' (\rho)} [(u^{\ell} - \ell
     \rho) F]}{\sigma_{h' (\rho)}^2}, \]
  \[ \partial_{\rho \rho} \varphi_F (\rho) = \frac{1}{\sigma_{h' (\rho)}^2}
     \left( \frac{E_{h' (\rho)} [(u^{\ell} - \ell \rho)^2 F]}{\sigma^2_{h'
     (\rho)}} - \ell E_{h' (\rho)} [F] - \frac{m_{3, h' (u)}}{\sigma_{h'
     (u)}^4} E_{h' (\rho)} [(u^{\ell} - \ell \rho) F] \right), \]
  which shows that
   $\frac{\sigma_{h' (\rho)}^2}{2 N} \partial_{\rho \rho} \varphi_F (\rho) - \varphi_F (\rho)$ is equal to
  \begin{align*}
   - \left( 1 + \frac{\ell}{2 N} \right) \varphi_F (\rho) - \frac{1}{2 N}
   \frac{m_{3, h' (\rho)}}{\sigma^4_{h' (\rho)}} E_{h' (\rho)} [F (u^{\ell}
   - \ell \rho)] + \frac{1}{2 N \sigma_{h' (\rho)}^2} E_{h' (\rho)} [F
   (u^{\ell} - \ell \rho)^2],
  \end{align*}
  so that the claim follows from Lemma~\ref{lem:equiv ensem}.
\end{proof}


Finally we are ready to prove Proposition~\ref{prop:equiv-ensem}.

\begin{proof}[Proof of Proposition~\ref{prop:equiv-ensem}]
  Corollary~\ref{cor:equivalence of ensembles} gives
  \[ \left| \psi_F (N, \bar{u}^N) - \varphi_F (\bar{u}^N) + \frac{\sigma_{h'
     (\bar{u}^N)}^2}{2 N} (\partial_{\rho \rho} \varphi_F) (\bar{u}^N)
     \right|^2 \lesssim \left( \frac{\ell}{N} \right)^3 \sup_{\lambda}
     \tmop{var}_{\lambda} (F) . \]
  On the other hand, we obtain from a Taylor expansion
  \begin{align*}
   \varphi_F (\bar{u}^N)
   &= \varphi_F (\rho) + \partial_{\rho} \varphi_F (\rho) (\bar{u}_N - \rho) + \frac{1}{2} \partial_{\rho \rho} \varphi_F (\rho) (\bar{u}^N - \rho)^2 \\
   &\quad + \frac{1}{2} \int_0^1 (1 - \tau)^2 \partial_{\rho \rho \rho} \varphi_F (\rho + \tau (\bar{u}^N - \rho)) \mathd \tau (\bar{u}^N - \rho)^3.
  \end{align*}
  It is easy to see inductively that the $k$-th derivative of $\varphi_F$ is
  given by linear combinations of $E_{h' (\rho)} [(u^{\ell} - \ell \rho)^j F]$
  for $j \leqslant k$ multiplied with polynomials in $h^{(j)}$ for $1
  \leqslant j \leqslant k + 1$. Arguing as in Corollary~\ref{cor:equivalence
  of ensembles}, we see that all derivatives of $h$ are bounded in $\rho$.
  Moreover,
  \[ | E_{h' (\rho)} [(u^{\ell} - \ell \rho)^j F] | \leqslant E_{h' (\rho)}
     [(u^{\ell} - \ell \rho)^{2 j}]^{1 / 2} \tmop{var}_{h' (\rho)} (F)^{1 /
     2}, \]
  and
  \[ E_{h' (\rho)} [(u^{\ell} - \ell \rho)^{2 j}] = \sum_{i_1, \ldots, i_{2 j}
     = 0}^{\ell - 1} E_{h' (\rho)} [(u_{i_1} - \rho) \ldots (u_{i_{2 j}} -
     \rho)] \lesssim_j \ell^j m_{2 j, h' (\rho)}, \]
  where in the last step we used that $u_i - \rho$ and $u_{i'} - \rho$ are
  centered and independent, so that the expectation vanishes unless all
  variables appear at least in pairs, and therefore up to a combinatorial factor
  depending on $j$ the number of addends in the sum is of order $\ell^j$. In
  conclusion,
  \begin{align*}
   &E_{h' (\rho)} \left[ \left| \varphi_F (\bar{u}^N) - \varphi_F (\rho) -
   \partial_{\rho} \varphi_F (\rho) (\bar{u}_N - \rho) - \frac{1}{2}
   \partial_{\rho \rho} \varphi_F (\rho) (\bar{u}^N - \rho)^2 \right|^2
   \right] \\
   &\qquad= E_{h' (\rho)} \left[ \left| \frac{1}{2} \int_0^1 (1 - \tau)^2
   \partial_{\rho \rho \rho} \varphi_F (\rho + \tau (\bar{u}^N - \rho))
   \mathd \tau (\bar{u}^N - \rho)^3 \right|^2 \right] \\
   &\qquad \lesssim \ell^3 \sup_{\lambda} \tmop{var}_{\lambda} (F) E_{h' (\rho)}
   [(\bar{u}^N - \rho)^6] \lesssim \left( \frac{\ell}{N} \right)^3
   \sup_{\lambda} \tmop{var}_{\lambda} (F).
  \end{align*}
  Finally, we would like to replace $\frac{\sigma_{h' (\bar{u}^N)}^2}{2 N}
  (\partial_{\rho \rho} \varphi_F) (\bar{u}^N)$ by $\frac{\sigma^2_{h'
  (\rho)}}{2 N} \partial_{\rho \rho} \varphi_F (\rho)$. For the second factor
  we use the bound on $\partial_{\rho \rho \rho} \varphi_F$ that we just
  derived. The first factor can be estimated as follows:
  \[ | \partial_{\rho} \sigma^2_{h' (\rho)} | = | \partial_{\rho} (\rho'' (h'
     (\rho))) | = \left| \frac{\rho''' (h' (\rho))}{\rho'' (h' (\rho))}
     \right| \leqslant \sup_{\lambda} \left| \frac{\rho'''
     (\lambda)}{\sigma_{\lambda}^3} \right| \sigma_{\lambda} = \sup_{\lambda}
     \left| \frac{m_{3, \lambda}}{\sigma_{\lambda}^3} \right| \sigma_{\lambda}
     < \infty \]
     by Lemma \ref{lem:UB}.
\end{proof}


\begin{thebibliography}{99}

  \bibitem[Ald81]{bib:aldous}
    Aldous, David. ``Weak Convergence and the General Theory of Processes.'' Unpublished notes, 1981.

  \bibitem[BD96]{bib:benArousDeuschel}
    Ben Arous, G. and J.-D. Deuschel. ``The construction of the $d + 1$-dimensional Gaussian droplet.'' Comm. Math. Phys. 179 (1996): 467-
488.

  \bibitem[BG97]{bib:bertiniGiacomin}
    Bertini, Lorenzo and Giambattista Giacomin. ``Stochastic Burgers and KPZ equations from particle systems.'' Comm. Math. Phys. 183.3 (1997): 571-607.

  \bibitem[Bil13]{bib:billingsley}
    Billingsley, Patrick. ``Convergence of probability measures.'' John Wiley \& Sons, 2013.
    
  \bibitem[BC14]{bib:borodinCorwin}
    Borodin, Alexei and Ivan Corwin. ``Macdonald processes.'' Probab. Theory Rel. Fields 158.1 (2014): 225-400.
            
  \bibitem[BCF14]{bib:borodinCorwinFerrari}
    Borodin, Alexei, Ivan Corwin, and Patrik L. Ferrari. ``Free energy fluctuations for directed polymers in random media in 1+1 dimensions.'' Comm. Pure Appl. Math. 67.7 (2014): 1129-1214.
        
  \bibitem[Cap03]{bib:caputo}
    Caputo, Pietro. ``Uniform Poincar\'e inequalities for unbounded conservative
    spin systems: the non-interacting case.'' Stochastic Process. Appl. 106.2 (2003): 223-244.

   \bibitem[Che56]{bib:kolmogorovCentsov}
     Chentsov, Nikolai N. ``Weak convergence of stochastic processes whose
     trajectories have no discontinuities of the second kind and the
     “heuristic” approach to the Kolmogorov-Smirnov tests.'' Theory of
     Probability \& Its Applications 1.1 (1956): 140-144.
     
  \bibitem[Cor12]{bib:corwin}
     Corwin, Ivan. ``The Kardar-Parisi-Zhang equation and universality class.'' Random matrices: Theory and applications 1.01 (2012): 1130001.
     
  \bibitem[CQR15]{bib:corwinQuastelRemenik}
      Corwin, Ivan, Jeremy Quastel, and Daniel Remenik. ``Renormalization fixed point of the KPZ universality class.'' J. Stat. Phys. 160.5 (2015): 815-834.
      
  \bibitem[CST16]{bib:corwinShenTsai}
    Corwin, Ivan, Hao Shen, and Li-Cheng Tsai. ``ASEP (q, j) converges to the KPZ equation.'' arXiv preprint arXiv:1602.01908 (2016).
      
  \bibitem[CT15]{bib:corwinTsai}
     Corwin, Ivan and Li-Cheng Tsai. ``KPZ equation limit of higher-spin exclusion processes.'' arXiv preprint arXiv:1505.04158 (2015).

  \bibitem[CY92]{bib:changYau}
     Chang, C.C. and H.-T. Yau. ``Fluctuations of one dimensional Ginzburg-Landau models in nonequilibrium.'' Commun. Math. Phys. 145 (1992): 209-239.
     
  \bibitem[DGI00]{bib:deuschelEtAl}
    Deuschel, J.-D., G. Giacomin and D. Ioffe. ``Large deviations and concentration properties for $\nabla \phi$ interface models.'' Probab. Theory Relat. Fields 117 (2000): 49-111.
    
  \bibitem[DT16]{bib:demboTsai}
     Dembo, Amir and Li-Cheng Tsai. ``Weakly asymmetric non-simple exclusion process and the Kardar-Parisi-Zhang equation.'' Comm. Math. Phys. 341.1 (2016): 219-261.

  \bibitem[DV92]{bib:donskerVaradhan}
    Donsker, M.D. and S.R.S. Varadhan. ``Large deviations from a hydrodynamic scaling limit.'' Commun. Pure Appl. Math. 42 (1989): 243-270.

  \bibitem[EK09]{bib:ethierKurtz}
    Ethier, Stewart N. and Thomas G. Kurtz. Markov processes: characterization and convergence. Vol. 282. John Wiley \& Sons, 2009.
    
  \bibitem[FGS16]{bib:francoGoncalvesSimon}
    Franco, Tertuliano, Patr\'icia Gon\c calves, and Marielle Simon. ``Crossover to the stochastic Burgers equation for the WASEP with a slow bond.'' arXiv preprint arXiv:1506.06560 (2015).
    
  \bibitem[FH14]{bib:frizHairer}
    Friz, Peter K. and Martin Hairer. A course on rough paths. Springer, 2014.
    
    \bibitem[FN01]{bib:funakiNishikawa}
      Funaki, T. and T. Nishikawa. ``Large deviations for the Ginzburg-Landau $\nabla \phi$ interface model.'' Probab. Theory Relat. Fields, 120 (2001): 535-568.
      
   \bibitem[FQ15]{bib:funakiQuastel}
     Funaki, Tadahisa, and Jeremy Quastel. ``KPZ equation, its renormalization and invariant measures.'' Stochastic Partial Differential Equations: Analysis and Computations 3.2 (2015): 159-220.
    
  \bibitem[Fri87]{bib:fritz}
     Fritz, J. ``On the hydrodynamic limit of a one-dimensional Ginzburg-Landau lattice model. The a priori bounds.'' Journal of Statistical Physics 47.3 (1987): 551-572.

  \bibitem[FS97]{bib:funakiSpohn}
     Funaki, Tadahisa and Herbert Spohn. ``Motion by mean curvature from the {G}inzburg-{L}andau {$\nabla\phi$} interface model''. Comm. Math. Phys. 185.1 (1997): 1-36.
     
  \bibitem[FSW15]{bib:ferrariSpohnWeiss}
     Ferrari, Patrik L., Herbert Spohn, and Thomas Weiss. ``Scaling limit for Brownian motions with one-sided collisions''. Ann. Appl. Probab.
 25.3 (2015): 1349--1382.
   
   \bibitem[Fun05]{bib:funaki}
     Funaki, Tadahisa ``Stochastic interface models.'' In: Picard, J. (ed.) Lectures on Probability Theory and Statistics, Ecole d'Et\'e de Probabilit\'es de Saint-Flour XXXIII-2003, Springer, 2005.

  \bibitem[GIP15]{bib:paracontrolled}
    Gubinelli, Massimiliano, Peter Imkeller, and Nicolas Perkowski. "Paracontrolled distributions and singular PDEs." Forum of Mathematics, Pi 3.6 (2015): 1-75.

  \bibitem[GJ13]{bib:gubinelliJara}
    Gubinelli, Massimiliano and Milton Jara. ``Regularization by noise and
    stochastic Burgers equations.'' Stochastic Partial Differential Equations:
    Analysis and Computations 1.2 (2013): 325-350.

   \bibitem[GJ14]{bib:goncalvesJara}
     Gon\c calves, Patr\'icia and Milton Jara. ``Nonlinear fluctuations of weakly
     asymmetric interacting particle systems.'' Archive for Rational Mechanics
     and Analysis 212.2 (2014): 597-644.
     
  \bibitem[GJS15a]{bib:goncalvesJaraSethuraman}
     Gon\c calves, Patr\'icia, Milton Jara, and Sunder Sethuraman. ``A stochastic Burgers equation from a class of microscopic interactions.'' Ann. Probab. 43.1 (2015): 286-338.
     
  \bibitem[GJS15b]{bib:goncalvesJaraSimon}
    Gon\c calves, Patr\'icia, and Marielle Simon. ``Second order Boltzmann-Gibbs principle for polynomial functions and applications.'' arXiv preprint arXiv:1507.06076 (2015).

  \bibitem[GOS01]{bib:giacominOllaSpohn}
    Giacomin, Giambattista, Stefano Olla, and Herbert Spohn. ``Equilibrium fluctuations for $\nabla \varphi$ interface model.'' Ann. Probab. (2001): 1138-1172.

  \bibitem[GP15a]{bib:energyUniqueness}
    Gubinelli, Massimiliano and Nicolas Perkowski. ``Energy solutions of KPZ are unique.'' arXiv preprint arXiv:1508.07764 (2015).
    
  \bibitem[GP15b]{bib:kpzReloaded}
    Gubinelli, Massimiliano and Nicolas Perkowski. ``KPZ reloaded.'' arXiv preprint arXiv:1508.03877 (2015).
    
  \bibitem[GP16]{bib:gubinelliHQ}
    Gubinelli, Massimiliano and Nicolas Perkowski. ``The Hairer--Quastel universality result at stationarity.'' arXiv preprint arXiv:1602.02428 (2016).
    
   \bibitem[GPV88]{bib:guoEtAl}
      Guo, M.Z., G.C. Papanicolaou and S.R.S. Varadhan. ``Nonlinear diffusion limit for a system with nearest neighbor interactions.'' Commun. Math. Phys. 118 (1988): 31-59.

   \bibitem[Hai13]{bib:hairerKPZ}
     Hairer, Martin. ``Solving the KPZ equation.'' Ann. Math. 178.2 (2013): 559-664.

   \bibitem[Hai14]{bib:hairerRegularity}
     Hairer, Martin. ``A theory of regularity structures.'', Invent. Math. 198.2 (2014): 269-504.
     
   \bibitem[Hos16]{bib:hoshino}
     Hoshino, Masato. ``Paracontrolled calculus and Funaki-Quastel approximation for the KPZ equation.'' arXiv preprint arXiv:1605.02624 (2016).

   \bibitem[HQ15]{bib:hairerQuastel}
     Hairer, Martin and Jeremy Quastel. ``A class of growth models rescaling to KPZ.'' arXiv preprint arXiv:1512.07845 (2015).
     
   \bibitem[HS15]{bib:hairerShen}
     Hairer, Martin and Hao Shen. ``A central limit theorem for the KPZ equation.'' arXiv preprint arXiv:1507.01237 (2015).

  \bibitem[KLO12]{bib:komorowskiLandimOlla}
    Komorowski, Tomasz, Claudio Landim, and Stefano Olla. Fluctuations in
    Markov processes: time symmetry and martingale approximation. Vol. 345.
    Springer Science \& Business Media, 2012.
    
  \bibitem[Lab16]{bib:labbe}
    Labb\'e, Cyril. ``Weakly asymmetric bridges and the KPZ equation.'' arXiv preprint arXiv:1603.03560 (2016).

  \bibitem[MNS89]{bib:milletEtAl}
    Millet, Annie, David Nualart, and Marta Sanz. ``Integration by parts and
    time reversal for diffusion processes.'' Ann. Probab. 17.1 (1989):
    208-238.

  \bibitem[MO13]{bib:menzOtto}
    Menz, Georg and Felix Otto. ``Uniform logarithmic sobolev inequalities for conservative spin systems with super-quadratic single-site potential.'' Ann. Probab. 41.3B (2013):
    208-238.

   \bibitem[Mit83]{bib:mitoma}
     Mitoma, Itaru. ``Tightness of probabilities on $C ([0, 1]; Y')$ and $D ([0, 1]; Y').$'' The Annals of Probability (1983): 989-999.

  \bibitem[OW16]{bib:ottoWeber}
    Otto, Felix and Hendrik Weber. ``Quasilinear SPDEs via rough paths.'' arXiv preprint arXiv:1605.09744 (2016).

  \bibitem[Qua14]{bib:quastel}
    Quastel, Jeremy. ``The Kardar-Parisi-Zhang equation and universality class.'' XVIIth International Congress on Mathematical Physics (2014): 113-133.

  \bibitem[QS15]{bib:quastelSpohn}
    Quastel, Jeremy and Herbert Spohn. ``The one-dimensional KPZ equation and its universality class.'' Journal of Statistical Physics 160.4 (2015): 965-984.

  \bibitem[RV07]{bib:russoVallois}
    Russo, Francesco and Pierre Vallois. ``Elements of stochastic calculus via regularization.'' S\'eminaire de Probabilit\'es XL. Springer Berlin Heidelberg, 2007. 147-185.
    
  \bibitem[Spo86]{bib:spohn}
    Spohn, Herbert. ``Equilibrium fluctuations for interacting Brownian particles.'' Commun. Math. Phys. 103 (1986): 1-33.

  \bibitem[Spo16]{bib:spohnSurvey}
    Spohn, Herbert. ``The Kardar-Parisi-Zhang equation-a statistical physics perspective.'' arXiv preprint arXiv:1601.00499 (2016).

  \bibitem[SS09]{bib:sasamotoSpohn}
    Sasamoto, Tomohiro and Herbert Spohn. ``Superdiffusivity of the 1D lattice Kardar-Parisi-Zhang equation.'' Journal of Statistical Physics 137.5-6 (2009): 917-935.
    
  \bibitem[SS15]{bib:sasamotoSpohn15}
    Sasamoto, Tomohiro and Herbert Spohn. ``Point-interacting Brownian motions in the KPZ universality class.'' Electronic Journal of Probability 20:87 (2015).
    
  \bibitem[Zhu90]{bib:zhu}
    Zhu, M. ``Equilibrium fluctuations for one-dimensional Ginzburg-Landau lattice
model.'' Nagoya Math. J. 117 (1990): 63-92.
    
\end{thebibliography}
\end{document}